\numberwithin{equation}{section}
\renewcommand{\phi}{\varphi}
\def\and{{\rm and}}
\newtheorem{lemma}{Lemma}[section]  
\newtheorem{theorem}[lemma]{Theorem} 
\newtheorem{proposition}[lemma]{Proposition}
\theoremstyle{remark}
\theoremstyle{definition}
\newtheorem{necs-cond}[lemma]{Necessary Condition}
\begin{document}
\pagenumbering{gobble}

\begin{center}

\textsc{\textbf{TEL AVIV UNIVERSITY}} \vspace{8mm}

\textsc{THE RAYMOND AND BEVERLY SACKLER} \vspace{2mm}

\textsc{FACULTY OF EXACT SCIENCES} \vspace{2mm}

\textsc{SCHOOL OF MATHEMATICAL SCIENCES} \vspace{14mm}

\LARGE{IRREDUCIBLE POLYNOMIALS WITH VARYING CONSTRAINTS ON COEFFICIENTS}

\vspace{10mm}

\small{Thesis submitted in partial fulfilment of the requirements for the M.Sc. \\ degree in the School of Mathematical Sciences, Tel Aviv University} 

\vspace{14mm}

\small{By} \vspace{2mm}

\normalsize{\textbf{Eyal Moses}}

\vspace{10mm}

\small{The research work for this thesis has been carried out \\ under the supervision of} \vspace{3mm}

\large{Prof. Lior Bary-Soroker} \vspace{30mm}

\small{July 2017}

\end{center}
\newpage 
\pagenumbering{arabic}

\textbf{{\LARGE Acknowledgements}} \vspace{3mm}

I thank my advisor, Lior Bary-Soroker, for his tremendous support and patience throughout this research. He taught me the importance of seeing concrete results in context of er scheme of mathematical knowledge. 
I am grateful to Lior 
for setting a fine example of how mathematical research should be performed. 

\newpage

\tableofcontents

\newpage


%

\begin{abstract}
We study the number of prime polynomials of degree $n$ over $\mathbb{F}_q$ in which the $i^{th}$ coefficient is either preassigned to be $a_i \in \mathbb{F}_q$ or outside a small set $S_i \subset \mathbb{F}_q$. This serves as a function field analogue of a recent work of Maynard, which counts integer primes that do not have specific digits in their base-$q$ expansion. Our work relates to Pollack's and Ha's work, which count the amount of prime polynomials with $\ll \sqrt{n}$ and $\ll n$ preassigned coefficients, respectively. Our result demonstrates how one can prove asymptotics of the number of prime polynomials with different types of constraints to each coefficient.
\end{abstract}

\section{Introduction}

The number of prime polynomials $P$ of degree $n$ over a finite field $\mathbb{F}_q$ has been known to be approximately $q^n/n$ for quite some time. A lot of work has been done on finding the distribution of primes that satisfy a particular condition. Writing
\begin{equation*}
	P = T^n + \sum_{i = 0}^{n-1}b_iT^i,
\end{equation*}
we might ask how many primes $P$ exist for which the tuple $(b_0,b_1...,b_{n-1})$ satisfies given conditions. A natural condition is one of the form $b_i = a$ for some $0 \leq i \leq n-1$, $a \in \mathbb{F}_q$. More generally, let $\mathcal{I} \subset \{0,\dots,n-1\}$ be a set of indices, and denote $I = \# \mathcal{I}$. Let $\{a_i\}_{i \in \mathcal{I}}$ be corresponding coefficients in $\mathbb{F}_q$, satisfying $a_0 \neq 0$ if $0 \in \mathcal{I}$. Denote by $\mathcal{A}$ the set of all monic, degree $n$ polynomials such that the coefficient of $T^i$ is $a_i$ for all $i \in \mathcal{I}$:
\begin{equation}\label{pollack_form}
\mathcal{A} = \left\{T^n + \sum_{j = 0}^{n-1}b_jT^j \in \mathbb{F}_q[T] : b_i = a_i~ \forall i \in \mathcal{I} \right\}
\end{equation}
 A topic of investigation has been to understand the distribution of prime polynomials in $\mathcal{A}$.
Hansen and Mullen conjectured that whenever $n \geq 3$ and $\# \mathcal{I} = 1$, the set $\mathcal{A}$ contains prime polynomials. This conjecture was proven by Wan \cite{Wan97} if $n$ or $q$ are sufficiently large, and the remaining cases were later solved by Ham and Mullen in \cite{HM98}. 

Further work went into finding the asymptotic behavior of the number of primes in $\mathcal{A}$. One might expect the number of primes in the set $\mathcal{A}$ to be $\frac{1}{q^I}\pi_q(n)$ in case $0 \notin \mathcal{I}$, and $\frac{1}{q^I(q-1)}\pi_q(n)$ if $0 \in \mathcal{I}$. Pollack \cite{Po} proves that the asymptotics are indeed so, provided that $\# \mathcal{I} = I < c \sqrt{n}$ with $c < 1$. Further progress has been made by Ha in \cite{junsoo}, where existence of primes in $\mathcal{A}$ is proven given that $q$ is sufficiently large with respect to $\#\mathcal{I}/n$. We prove a weaker result than in  \cite{junsoo}, which generalizes Pollack's theorem in \cite{Po}:
\newpage
\begin{theorem}\label{pure_pollack_case}
Given $\mathcal{A}, \mathcal{I}, \{a_i\}_{i \in \mathcal{I}} \subset \mathbb{F}_q$ as before, write $I = \#\mathcal{I}$, $m_{n,I} = \min\{n/I, \sqrt{n}\}$, and $\rho = I / n$. Denote $\mathfrak{S}_\mathcal{I} = q^{-I}$ if $0 \notin \mathcal{I}$, and $\mathfrak{S}_\mathcal{I} = q^{-(I-1)}(q-1)^{-1}$ otherwise.
If $I = o\left(n/\log(n)\right)$, then
\begin{equation*}
\left| \left(\sum_{P \in \mathcal{A}} 1\right)  - \mathfrak{S}_\mathcal{I} \cdot \pi_q(n) \right| ~\leq ~ \left(1 + o(1)\right)q^{n - \frac{1}{2}\lfloor \frac{n}{2} \rfloor} +q^{n  -I}q^{-\left(1 + o(1)\right)m_{n,I}} ~.
\end{equation*}
If $I > 2\sqrt{n}$, the following bound holds:
\begin{equation*}\left| \left(\sum_{P \in \mathcal{A}} 1\right)  - \mathfrak{S}_\mathcal{I} \cdot \pi_q(n) \right| \leq \left(1 + o(1)\right)q^{n - \frac{1}{2}\lfloor \frac{n}{2} \rfloor} + q^{n-I}q^{-n/I + 4 + B_{q,\rho}}~,
\end{equation*} 
with $B_{q,\rho}$ tending to zero as $q$ grows to infinity provided that $n$ is sufficiently large in terms of $\rho$.

\end{theorem}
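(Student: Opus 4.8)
The plan is to count prime polynomials in $\mathcal{A}$ by applying the explicit formula relating $\sum_{P \in \mathcal{A}} 1$ to sums of Dirichlet characters over short intervals, or equivalently via a Fourier/orthogonality argument on the additive group of coefficient tuples. Concretely, I would write the indicator of the condition $\{b_i = a_i \ \forall i \in \mathcal{I}\}$ as
\begin{equation*}
\mathds{1}[b_i = a_i \ \forall i \in \mathcal{I}] = \frac{1}{q^I}\sum_{\bfc \in \mathbb{F}_q^{\mathcal{I}}} \psi\!\left(\sum_{i \in \mathcal{I}} c_i(b_i - a_i)\right),
\end{equation*}
where $\psi$ is a fixed nontrivial additive character of $\mathbb{F}_q$. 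Summing over monic primes $P$ of degree $n$ (with the extra $a_0 \neq 0$ bookkeeping when $0 \in \mathcal{I}$, which contributes the factor $(q-1)^{-1}$ and reshapes $\mathfrak{S}_\mathcal{I}$), the $\bfc = 0$ term yields the main term $\mathfrak{S}_\mathcal{I}\cdot \pi_q(n)$, and the error is governed by the nontrivial-character sums $\sum_{\deg P = n} \Lambda(P)\, \psi(\ldots)$. These are exactly "short interval" character sums, and the standard tool is Weil's bound applied through the relevant $L$-functions (or Hayes' work on primes in short intervals / Kummer-type twists), giving each nontrivial term a saving of roughly $q^{n/2}$ times a degree factor.

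The technical heart is that the character $\bfc$ picks out a polynomial $g_{\bfc}(T) = \sum_{i \in \mathcal{I}} c_i T^i$, and the associated $L$-function has conductor controlled by the \emph{largest} index $i$ with $c_i \neq 0$ together with how far the lowest active index sits from $0$; the quantity $m_{n,I} = \min\{n/I, \sqrt{n}\}$ enters because, by a pigeonhole/spacing argument on $\mathcal{I} \subset \{0,\dots,n-1\}$ of size $I$, one can always find a "gap" of length $\gtrsim n/I$ in which no index of $\mathcal{I}$ lies, and Pollack's method exploits exactly such a gap (capped at $\sqrt{n}$ for the subcritical range) to get a power saving $q^{-(1+o(1))m_{n,I}}$ on top of the trivial count $q^{n-I}$. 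So the steps are: (1) set up the character expansion and isolate the main term; (2) for each nontrivial $\bfc$, translate the character sum into a prime-counting problem in an arithmetic-progression/short-interval sense and invoke the Weil bound, yielding the $q^{n-\frac12\lfloor n/2\rfloor}$ type contribution from the "generic" characters; (3) handle the characters that are not covered by the generic bound — those whose conductor is too small — by the gap argument, contributing the $q^{n-I}q^{-(1+o(1))m_{n,I}}$ term in the first regime; (4) in the regime $I > 2\sqrt{n}$, where gaps are necessarily short (length $\approx n/I$), redo the bookkeeping keeping the constants $4$ and $B_{q,\rho}$ explicit, where $B_{q,\rho}\to 0$ as $q\to\infty$ comes from absorbing lower-order polynomial-in-$n$ factors into $q^{B_{q,\rho}}$ once $n$ is large relative to $\rho = I/n$.

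The main obstacle I anticipate is step (3)/(4): controlling the characters $\bfc$ for which the naive Weil bound is \emph{weaker} than the trivial bound, i.e. when the effective conductor is small. One must show these "bad" characters are sparse enough and that the gap structure of $\mathcal{I}$ forces enough cancellation — this is where the interplay between the combinatorial geometry of $\mathcal{I}$ (how the $I$ indices distribute in $\{0,\dots,n-1\}$) and the analytic input is delicate, and where the quantitative cutoffs $I = o(n/\log n)$ versus $I > 2\sqrt n$ come from balancing the two error contributions. A secondary subtlety is the $0 \in \mathcal{I}$ case: removing the constraint $b_0 = 0$ (so that $P$ has nonzero constant term, equivalently is coprime to $T$) changes the relevant modulus from $T^k$ to something coprime to $T$, and one must check the main term is reshaped to $\mathfrak{S}_\mathcal{I} = q^{-(I-1)}(q-1)^{-1}$ exactly, with no extra error. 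I would also keep an eye on uniformity: all the $o(1)$'s must be uniform in the choice of $\{a_i\}$ and in $\mathcal{I}$ (depending only on $I$, $n$, $q$), which the character-sum formulation makes transparent since the bounds never see the $a_i$.
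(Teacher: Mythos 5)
Your route is at bottom the same as the paper's: Theorem~\ref{pure_pollack_case} is deduced there as the special case $S_j=\emptyset$ of Theorem~\ref{main_thm}, and in that case the Fourier transform $\hat F_{q,n}$ vanishes except at the $q^I$ points $\theta_{\mathbf{c}}=\sum_{i\in\mathcal I}c_iT^{-i-1}$, so your $q^I$-term character expansion is literally the paper's Parseval identity. The genuine gap is in your steps (3)--(4), which restate the goal rather than prove it, and your step (2) is misstated. The analytic input is not ``Weil applied to the $L$-function attached to $g_{\mathbf{c}}$'': the conductor of $\theta_{\mathbf{c}}$ is $T^{i_{\max}+1}$, non-squarefree and of degree up to $n$, where such a bound is trivial. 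What is actually used is Dirichlet approximation (Lemma~\ref{dirichlet}) of $\theta_{\mathbf{c}}$ by a reduced $G/H$ with $\deg H\le n/2$, plus Pollack's dichotomy (Lemma~\ref{circ_method_ext}): one always has $q^{n-\frac12\lfloor n/2\rfloor}$, and only when $H\ne 1,T$ is squarefree with approximation quality $<q^{-n}$ does the extra term $q^{n-\deg H}$ appear. Decisively, you give no mechanism for counting the characters in that bad case at each level $h=\deg H$. The paper's mechanism is Lemma~\ref{deg_zero_bound} (a reduced fraction with squarefree denominator of degree $h$, other than $1,T$, has no $h$ consecutive vanishing coefficients), combined with Lemma~\ref{diff_mult_t} (shifting the window of relevant coefficients costs at most a factor $q$) and Lemma~\ref{keta_exists} (some window of length $2h$ meets $\mathcal I$ in at most roughly $3hI/n$ points). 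Note this uses windows containing \emph{few} points of $\mathcal I$, not the single $\mathcal I$-free gap of length about $n/I$ that you invoke: that gap may lie outside the range in which $\theta_{\mathbf{c}}$ and $G/H$ agree, so by itself it yields nothing. Likewise $\sqrt n$ does not come from ``capping the gap'' but from optimizing the resulting level sums, which behave like $\sum_h q^{-h-n/h}$; the regime $I>2\sqrt n$ is then handled by a geometric-series estimate, which is where the explicit $4$ and the constant $B_{q,\rho}$ arise (Lemmas~\ref{degpart_x1}--\ref{err_term_bigi_bnd}).

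A further concrete error: when $0\in\mathcal I$, the main term is not the $\mathbf{c}=0$ term. That term equals $q^{-I}\pi_q(n)$, which falls short of $\mathfrak S_{\mathcal I}\pi_q(n)$ by the factor $q/(q-1)$; the missing mass is carried by the $q-1$ characters supported only at index $0$, i.e.\ $\theta=c_0/T$, for which $f(c_0/T)\approx-\pi_q(n)/(q-1)$ is of main-term size and cannot be absorbed into any of your error terms. The paper therefore singles out the set $\mathcal S_1=\{F:\theta_F=c/T\}$ and evaluates its contribution with the prime polynomial theorem in progressions modulo $T$ (Subsection~\ref{main_term_sub}); this regrouping is exactly where $\mathfrak S_{\mathcal I}=q^{-(I-1)}(q-1)^{-1}$ comes from. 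You flag the $0\in\mathcal I$ case as a subtlety, but as written your decomposition would leave $q-1$ uncancelled contributions of total size comparable to the main term.
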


Another natural question one might ask is how many primes satisfy the condition $b_i \neq a$ for all $0 \leq i < n$, given $a \in \mathbb{F}_q$. A surprising result by Maynard \cite{Ma16} shows that the number of rational primes without a specific digit in its decimal expansion is of the correct asymptotic. Maynard further proves that the correct asymptotic is kept when taking coefficients in the $q$-basis outside of a set $S \subset \{0, \dots, q-1\}$ of size $\#S < q^{23/80}$. We adopt the method of Maynard in \cite{Ma16} to obtain an analogous result in function fields. For $a \in \mathbb{F}_q$, denote
\begin{equation}\label{maynard_form}
\mathcal{B} = \left\{ T^n + \sum_{j = 0}^{n-1}b_jT^j \in \mathbb{F}_q[T] : b_j \neq a ~ \forall 0 \leq j < n \right \}
\end{equation} 
The expected number of prime polynomials in $\mathcal{B}$ is $\mathfrak{S}_a \cdot \pi_q(n)$, with $\mathfrak{S}_a  =\frac{(q-1)^{n-1}}{q^{n-1}}$ if $a = 0$, and $ \mathfrak{S}_a  = \frac{(q-1)^{n-1}}{q^{n-1}} \cdot \frac{q - 2}{q-1}$ otherwise.

\begin{theorem}\label{pure_maynard_case}
Let $q \geq 5$, and let $\mathcal{B}$, $a$, and $\mathfrak{S}_a$ be defined as before. Then
\begin{equation*}
\left| \left(\sum_{P \in \mathcal{B}} 1\right)  - \mathfrak{S}_a \cdot \pi_q(n) \right| ~\leq ~ \left(2^n+o(1)\right)q^{n - \frac{1}{2}\lfloor \frac{n}{2} \rfloor} + nq^{n - c\sqrt{n}}  + O(q^{n/2} + 1),
\end{equation*}
with $c = \sqrt{(1-\log_q2)(1 - 2\log_q 2)}$.
\end{theorem}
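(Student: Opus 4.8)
My plan is to attack the sum $\sum_{P\in\mathcal B}1$ by Fourier analysis / orthogonality on $\mathbb F_q$, adapting Maynard's circle-method argument to the function field setting, where the finiteness of $\mathbb F_q$ replaces the interval sums of the classical case. First I would encode the constraint $b_j\neq a$ for all $0\le j<n$ through additive characters of $\mathbb F_q$: writing $\mathds 1[b_j\neq a]=1-\tfrac1q\sum_{\psi}\psi(b_j-a)$, I expand the product over $j$ into $q^n$ terms indexed by tuples $(\psi_0,\dots,\psi_{n-1})$ of characters (equivalently, by the frequency vector $\xi\in\mathbb F_q^n$). The main term, corresponding to all $\psi_j$ trivial, is exactly $\mathfrak S_a\cdot\pi_q(n)$ up to the density bookkeeping for the $a=0$ versus $a\neq 0$ cases; every other term is a short character sum over prime polynomials, $\sum_{P \text{ prime}, \deg P=n}\prod_j\psi_j(b_j(P))$, weighted by $\overline{\psi_j(a)}/q$.

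Next I would reduce these prime-polynomial character sums to the von Mangoldt sum $\sum_{\deg f=n}\Lambda(f)\chi(f)$ for the appropriate "short-interval" type character $\chi$ attached to the frequency vector; these are detected by the places above the point at infinity and the modulus $T^{n-k}$ for the relevant truncation, so the relevant $L$-functions are those studied in the work on short intervals of Keating–Rudnick and in Pollack's paper. The point is to separate the frequency vectors $\xi$ by their "support depth" — roughly, by how many top coefficients are unconstrained — and bound the corresponding character sum via the Riemann Hypothesis for these $L$-functions, which gives a saving of $q^{n/2}$ times a power determined by the conductor. Summing the trivial bound $q^n q^{-1/2\lfloor n/2\rfloor}$ over the $\le 2^n$ "deep" frequency vectors (those not controlled by RH with enough saving) produces the first error term $(2^n+o(1))q^{n-\frac12\lfloor n/2\rfloor}$; the $O(q^{n/2}+1)$ term absorbs the contribution of the finitely many bad characters / the trivial character's secondary term.

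The genuinely hard part — and where Maynard's method really enters — is the middle range of frequency vectors, those with many nonzero entries spread across the digits, for which one cannot afford either the trivial bound or a naive RH estimate. Here I would import Maynard's combinatorial device: partition $\{0,\dots,n-1\}$ into blocks, and on each block run an $L^1$-type estimate for the "digit-detecting" function, showing that the total mass of the character sum over a block of length $\ell$ is at most $(\text{something})^\ell$ with base strictly less than $q$; the optimization of the block length against the number of blocks is what yields the exponent $c=\sqrt{(1-\log_q 2)(1-2\log_q 2)}$ (note $c$ is real exactly when $q\ge 5$, i.e. $\log_q 2<\tfrac12$, explaining the hypothesis). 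Concretely one writes the count as an iterated sum over digits, at each step bounding the number of admissible continuations by a transfer-matrix / recursive inequality, and balances the two competing errors $q^{-\alpha\ell}$ and $2^{n/\ell}$ to get the $nq^{n-c\sqrt n}$ term. I expect the bookkeeping to control the interaction between the "circle-method" error (first term) and the "combinatorial sieve" error (second term) across all frequency vectors simultaneously to be the main obstacle: one must choose the threshold separating the RH-range from the Maynard-range carefully so that neither bound blows up, and verify that the constant $\frac12\lfloor n/2\rfloor$ coming from the worst RH saving is compatible with the $c\sqrt n$ coming from the block optimization.
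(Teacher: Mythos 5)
Your overall skeleton --- expanding the digit constraint by additive characters (equivalently, working with the Fourier transform $\hat{F}_{q,n}$ of the indicator of $\mathcal{B}$), inserting a square--root--cancellation bound for the exponential sum over primes, and accounting for the total weight $2^n$ by an $L^1$--type estimate --- is indeed the route the paper takes, and your first error term $(2^n+o(1))q^{n-\frac12\lfloor n/2\rfloor}$ is obtained there in essentially the way you describe. The genuine gap is in the part that must produce $nq^{n-c\sqrt n}$. You place the difficulty at frequency vectors ``with many nonzero entries spread across the digits'' and propose a block decomposition of the digit positions with a transfer--matrix bound, balancing $q^{-\alpha\ell}$ against $2^{n/\ell}$. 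In the function--field setting those generic frequencies are harmless: at each position where the digit of $\theta$ is nonzero, the corresponding factor of $|\hat{F}_{q,n}(\theta)|$ is at most $\#S_j=1$ instead of $q$, so such $\theta$ are already disposed of by the $L^1$ bound combined with the Weil--type estimate $|f(\theta)|\le q^{n-\frac12\lfloor n/2\rfloor}$, i.e.\ by your first term. The dangerous frequencies are the opposite ones: $\theta$ within $q^{-n}$ of a fraction $G/H$ with $H$ squarefree of small degree $h$ (the major arcs), where the circle--method bound degrades to $q^{n-\frac12\lfloor n/2\rfloor}+q^{n-h}$, so for small $h$ the prime sum gives almost no saving and one must instead show that $\hat{F}_{q,n}(G/H)$ is small and that there are few such frequencies. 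Your proposal contains no mechanism for this.

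Concretely, what is missing is the paper's major--arc machinery: (i) Dirichlet approximation in $\mathbb{F}_q(T)_\infty$, classifying each frequency by the degree $h=\deg H$ of its rational approximation; (ii) the structural lemma that for $H\ne 1,T$ squarefree of degree $h$ and $G$ coprime to $H$, the digit expansion of $\{G/H\}$ has no $h$ consecutive zeros, hence at least roughly $n/h$ nonzero digits, each saving a factor of about $q$ in $|\hat{F}_{q,n}(G/H)|$; (iii) a count of roughly $q^{2h}$ such fractions of denominator degree $h$ (via an injection into $\mathcal{M}_{2h}$); and (iv) the optimization over $h$ of an exponent of the shape $-(1-2\varepsilon)h-(1-\varepsilon)n/h$ with $\varepsilon=\log_q 2$, whose maximum at $h\asymp\sqrt{(1-\varepsilon)/(1-2\varepsilon)}\sqrt n$ yields exactly $c\sqrt n$ with $c=\sqrt{(1-\log_q2)(1-2\log_q2)}$, the factor $n$ coming from the union bound over $h\le n/2$. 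Your heuristic balance of a block length $\ell$ against $2^{n/\ell}$ is not a substitute: the quantity to optimize is the denominator degree, the per--window saving can only be justified by the no--long--zero--run property of rationals (which you never invoke), and your balance would produce an exponent of the wrong shape (vanishing as $\log_q 2\to 0$, whereas $c\to 1$). A lesser point: the correction turning the all--trivial--character term into $\mathfrak{S}_a\pi_q(n)$ (the factor $(q-2)/(q-1)$ when $a\ne 0$) comes from the frequencies $c/T$ supported on the constant digit, evaluated by the prime polynomial theorem in arithmetic progressions mod $T$; these contributions are of main--term size and cannot be absorbed into $O(q^{n/2})$, so that bookkeeping must actually be carried out.
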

Note that the result is valid whenever $q \geq 5$, but it is only useful when $2^nq^{n - \frac{1}{2}\lfloor \frac{n}{2} \rfloor} \ll q^n$, i.e. when $q \geq 17$.

In this thesis, we consider sets $\mathcal{C}$ that combine the two constraints, and prove a theorem that generalizes both Theorem \ref{pure_pollack_case} and Theorem \ref{pure_maynard_case}. Let $\mathcal{I} \uplus \mathcal{J}$ be a partition of $\{0,\dots,n-1\}$, and write $I = \# \mathcal{I}$. Let $\{a_i\}_{i \in \mathcal{I}} \subset \mathbb{F}_q$ be such that $a_0 \neq 0$ if $0 \in \mathcal{I}$. Consider sets  $S_i \subset \mathbb{F}_q$ for every $i \in \mathcal{J}$, and write $N_i  = \# S_i$. Moreover, assume that $0 \notin S_0$ if $0 \in \mathcal{J}$. Denote
\begin{equation}\label{general_set}
	\mathcal{C} = \left\{T^n + \sum_{i = 0}^{n-1}b_iT^i : b_i = a_i~ \forall i \in \mathcal{I}, b_j \not \in S_j~ \forall j \in \mathcal{J} \right\}.
\end{equation}
In words, the set $\mathcal{C}$ consists of the monic, degree $n$ polynomials such that for all $i \in \mathcal{I}$ the $i^{th}$ coefficient is prescribed to be $a_i$, and the rest of the coefficients are outside small sets $S_i$.

The number of primes to be expected in $\mathcal{C}$ is $\mathfrak{S} \cdot \pi_q(n)$, with 
\begin{equation}\label{sigma_defn}
\mathfrak{S} = 
\left\{
	\begin{array}{ll}
		 \frac{\prod_{j \in \mathcal{J}} (q - N_j)}{q^{n-1}(q-1)}  & \mbox{if } 0 \in \mathcal{I}\\
		 \\
		 \frac{(q-1-N_0)\prod_{0 < j \in \mathcal{J}} (q - N_j)}{q^{n-1}(q-1)} & \mbox{if } 0 \in \mathcal{J}~.
	\end{array}
\right.
\end{equation}
In section \ref{discussion_section} we give a brief explanation why this is indeed the asymptotic one might expect.
For convenience, we define
\begin{equation}\label{alpha_def}
	\alpha(m) = \sup_{\substack{i_1 < ... < i_m \\ i_j \in \mathcal{J}}}   \prod_{j = 1}^m (N_{i_j} + 1).
\end{equation}
Essentially, $\alpha(m)$ is ``small'' if the averages of all subsets of $\{N_j\}_{j \in \mathcal{J}}$ of size $m$ are ``small". Moreover, 
$\alpha(n-I) = \prod_{j \in \mathcal{J}}(N_j + 1)$.
We are now ready to state our main theorem.

\begin{theorem}\label{main_thm} Let $n \geq 2$, and let $\mathcal{I} \uplus \mathcal{J} = \{0,\dots,n-1\}$. To each $i \in \mathcal{I}$ assign $a_i \in \mathbb{F}_q$, and for every $j \in \mathcal{J}$ assign a set $S_j \subset \mathbb{F}_q$. Denote $I = \# \mathcal{I}$, $N_j = \#S_j$ and let $\alpha, \mathcal{C}$ be defined as in \eqref{alpha_def} and \eqref{general_set}. Assume that for all $j \in \mathcal{J}$, we have \mbox{$N_j < q^\varepsilon$} with $\varepsilon < 1$. Assume further that $\frac{I}{n} < \frac{1}{4} \cdot \frac{1-2\varepsilon}{1-\varepsilon}(1 - \tau)$ for some $\tau > 0$. Denote $s = \sqrt{(1 - \varepsilon)/(1 - 2\varepsilon)}\sqrt{n}$. If $I = o\left(n/\log(n)\right)$, then 
\begin{equation*}
	\left| \left(\sum_{P \in \mathcal{C}} 1\right)  - \mathfrak{S} \cdot \pi_q(n) \right| \leq 
	\left(\alpha(n - I) + o(1)\right)q^{n - \frac{1}{2}\lfloor \frac{n}{2} \rfloor} + q^{n  -I}q^{-\left(1 - 2\varepsilon + o(1)\right)m_{n,I,\varepsilon}} ~,
\end{equation*}
where $P$ ranges only over prime polynomials, with $m_{n,I,\varepsilon} = \min \{n/I,s\}$, and \mbox{$\mathfrak{S}$} is given in \eqref{sigma_defn}.
If $y = \frac{I\cdot s}{n} > 1$ and $n$ is sufficiently large in terms of $\varepsilon$ and $\tau$, the following bound holds:
\begin{equation*}
	\left| \left(\sum_{P \in \mathcal{C}} 1\right)  - \mathfrak{S} \cdot \pi_q(n) \right| \leq  
	\left(\alpha(n - I) + o(1)\right)q^{n - \frac{1}{2}\lfloor \frac{n}{2} \rfloor} + q^{n-I}q^{-(1 - 2\varepsilon)n/I + 4 - 3 \varepsilon + B_{q,\varepsilon,\tau,y}}~,
\end{equation*} 
with $B_{q,\varepsilon,\tau,y}$ tending to zero as $q$ grows to infinity.

\end{theorem}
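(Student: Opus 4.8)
The plan is to reduce the counting problem to bounding character sums and then to estimate those sums using the structure of the sets $\mathcal{C}$. First I would use the standard orthogonality/inclusion–exclusion machinery: the condition $b_i = a_i$ for $i \in \mathcal{I}$ is detected by additive characters on $\mathbb{F}_q$, and the condition $b_j \notin S_j$ for $j \in \mathcal{J}$ is written as $1 - \mathbf{1}_{S_j}(b_j)$, where each indicator $\mathbf{1}_{S_j}$ is expanded in additive characters with coefficients of size at most $N_j/q$ (plus a main term). Multiplying these out over all $n$ coordinates and inserting the von Mangoldt / prime-counting weight, one obtains the expected main term $\mathfrak{S}\cdot\pi_q(n)$ plus an error that is a sum, over nonzero additive characters $\psi$ of the coefficient-space, of short interval–type character sums $\sum_{P} \psi(P)$ — i.e. sums of $\psi$ against primes, where $\psi$ depends only on finitely many of the low-degree coefficients of $P$. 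The weights attached to each such $\psi$ are products of the Fourier coefficients of the $\mathbf{1}_{S_j}$, and summing their absolute values over all relevant $\psi$ is exactly where $\alpha(m)$ and $\alpha(n-I)$ enter: a character supported on a set of coordinates indexed by $T \subset \mathcal{J}$ of size $m$ contributes a weight bounded in terms of $\prod_{j \in T}(N_j+1)/q^{|T|}$, and $\alpha$ packages the worst case.

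Next I would invoke the function-field analogue of the Weil/Dirichlet-style estimate for such short-interval character sums — this should be exactly the input used to prove Theorem \ref{pure_pollack_case} and Theorem \ref{pure_maynard_case}, namely a bound of the shape $|\sum_P \psi(P)| \le (1+o(1)) q^{n - \frac12\lfloor n/2\rfloor}$ (coming from the Riemann hypothesis for the relevant $L$-functions / the short-interval prime polynomial theorem of Keating–Rudnick type) when $\psi$ involves few coefficients, together with the trivial bound $q^{n-k}$ when $\psi$ pins down $k$ coefficients. The first displayed inequality in Theorem \ref{main_thm} is then obtained by splitting the character sum according to how many coordinates in $\mathcal{J}$ the character touches: characters touching few coordinates are handled by the Weil-type bound (producing the $(\alpha(n-I)+o(1))q^{n-\frac12\lfloor n/2\rfloor}$ term after summing the weights, which is dominated by the full product over $\mathcal{J}$), and characters touching many coordinates, together with all characters on $\mathcal{I}$, are handled by combining the trivial bound with a counting argument à la Pollack, optimizing a parameter to get the saving $q^{-(1-2\varepsilon+o(1))m_{n,I,\varepsilon}}$; the exponent $s=\sqrt{(1-\varepsilon)/(1-2\varepsilon)}\sqrt n$ and the constraint $I/n < \frac14\frac{1-2\varepsilon}{1-\varepsilon}(1-\tau)$ are precisely what make the two regimes meet. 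For the second displayed bound (the large-$I$ regime, $y = Is/n > 1$), the same decomposition is used but the optimization is done differently: the saving degrades from $q^{-n/I}$-type to the explicit $q^{-(1-2\varepsilon)n/I + 4 - 3\varepsilon + B_{q,\varepsilon,\tau,y}}$, with $B_{q,\varepsilon,\tau,y}\to 0$ as $q\to\infty$, mirroring the $I > 2\sqrt n$ clause of Theorem \ref{pure_pollack_case} with $\varepsilon$-dependent adjustments.

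The hard part, I expect, will be the bookkeeping in the weighted character-sum estimate: controlling $\sum_\psi |\widehat{w}(\psi)|$ uniformly — that is, showing that the total mass of the Fourier weights coming from $\prod_{j\in\mathcal J}(1 - \mathbf 1_{S_j})$, organized by the support size of $\psi$, telescopes into the clean quantities $\alpha(m)$ and ultimately $\alpha(n-I)q^{-\frac12\lfloor n/2\rfloor}$, while simultaneously keeping the cross-terms with the $\mathcal{I}$-characters under control. A secondary subtlety is the correct choice of the cutoff between "few" and "many" coordinates and the attendant parameter optimization, which must be tuned so that the Weil-type term does not blow up (this forces $N_j < q^\varepsilon$ and the factor $1-2\varepsilon$ in the exponent) and so that the combinatorial term matches Pollack's in the $S_j=\emptyset$ case. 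Once the weight estimate is in place, the two displayed bounds follow by assembling the two regimes exactly as in the proofs of Theorems \ref{pure_pollack_case} and \ref{pure_maynard_case}, now uniformly in the mixed setting.
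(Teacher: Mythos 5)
Your opening setup (detect the constraints by additive characters on the coefficient space, get $\mathfrak{S}\cdot\pi_q(n)$ plus a weighted sum of prime exponential sums, with weights whose total mass is controlled by $\alpha$) is essentially the paper's Fourier/Parseval framework, and your weight bookkeeping corresponds to Lemma \ref{fhat_bound} and Proposition \ref{gen_bound}. But the mechanism you propose for the error term has a genuine gap: you split the characters by \emph{how many coefficients they touch} and assert a square-root--cancellation bound $|\sum_P\psi(P)|\leq(1+o(1))q^{n-\frac12\lfloor n/2\rfloor}$ whenever $\psi$ involves few coefficients, with a ``trivial bound $q^{n-k}$'' otherwise. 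That dichotomy is false: a character touching only the top coefficient corresponds to $\theta=c/T$, where the prime sum has size $\asymp q^n/n$ with no cancellation (this is exactly the major arc that produces the main term), and in general the size of $f(\theta)=\sum_P\mathbf{e}(\theta P)$ is governed not by the support of the character but by the Dirichlet approximation of $\theta$ by a fraction $G/H$ (Lemmas \ref{dirichlet} and \ref{circ_method_ext}, giving $|f(\theta)|\leq q^{n-\frac12\lfloor n/2\rfloor}+q^{n-\deg H}$). Also, the trivial bound for a prime sum is $\pi_q(n)$, not $q^{n-k}$; the $q^{n-k}$-type saving in this problem comes from the Fourier coefficient $\hat F_{q,n}$, not from the prime sum.

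What is missing, and is the actual core of the paper, is the estimation of $Y_h=\sum_{\deg H=h}|\hat F_{q,n}(G/H)|$ for each denominator degree $h$, which is then paired with the $q^{n-h}$ term of the circle-method bound and summed over $h$. This rests on two ideas absent from your outline: (i) the Maynard-style structural fact (Lemma \ref{deg_zero_bound}) that the expansion of $\{G/H\}$, $H\neq 1,T$ squarefree of degree $h$, has no $h$ consecutive zero coefficients, so at least about $n/h$ of the coefficient positions are nonzero and each such position in $\mathcal{J}$ contributes a factor $N_j<q^{\varepsilon}$ instead of $q$ to $|\hat F_{q,n}(G/H)|$ --- this is where the saving $q^{-(1-\varepsilon)n/h}$, and after optimizing over $h$ (at $h\approx s$) the exponent $1-2\varepsilon$, come from; and (ii) a window-shifting argument (Lemmas \ref{keta_exists}, \ref{bound_fhat_by_z_bound}, \ref{diff_mult_t}) that chooses a block $K_{x,2h}$ meeting $\mathcal{I}$ in few indices, which is what produces the $n/I$-dependence, the two regimes $m_{n,I,\varepsilon}=\min\{n/I,s\}$ versus $y>1$, and the hypothesis $\frac{I}{n}<\frac14\cdot\frac{1-2\varepsilon}{1-\varepsilon}(1-\tau)$ needed to make the resulting geometric series over $h$ converge (Lemmas \ref{degpart_x1}--\ref{err_term_bigi_bnd}). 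Your proposal correctly anticipates that a Pollack-style optimization is needed and that $s$ is the balancing point, but without the circle-method decomposition by $\deg H$ and the no-long-zero-block property of fractions, the claimed exponents $q^{-(1-2\varepsilon+o(1))m_{n,I,\varepsilon}}$ and $q^{-(1-2\varepsilon)n/I+4-3\varepsilon+B_{q,\varepsilon,\tau,y}}$ cannot be reached. (A minor further point: the main term requires the prime polynomial theorem in arithmetic progressions to handle the constraint on $b_0$, which is where the $O(q^{n/2+1})$ term arises.)
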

Note that these bounds are only useful when $\alpha(n-I) < q^{n/4}$, since the first term in the error term is about $q^{-\frac{3}{4}n + \log_q(\alpha(n))}$. For example, as stated before, when $N_i = 1$ for all $0 \leq i < n$,  the result is interesting for $q \geq 17$.
	
Note that Theorem \ref{main_thm} is indeed a generalization of Theorems \ref{pure_pollack_case} and \ref{pure_maynard_case}, since the definition of the set $\mathcal{C}$ in \eqref{general_set} is more general than \eqref{pollack_form} and \eqref{maynard_form}. Indeed, if $S_i = \emptyset$,
then the condition $b_i \notin S_i$ is trivially satisfied. Therefore, if we choose $S_i = \emptyset$ for all $i \in \mathcal{J}$, then $\mathcal{C}$ will be a set in the form of \eqref{pollack_form}. On the other hand, if $\mathcal{I} = \emptyset$, and $S_0 = \dots = S_{n-1} = \{a\}$, then the set $\mathcal{C}$ will be in the form of \eqref{maynard_form}.

\subsection{Notation and definitions.}
Denote by
\begin{equation*}
\mathbb{F}_q(T)_{\infty} = \left\{ \sum_{l<k}a_lT^l : a_l \in \mathbb{F}_q , k \in \mathbb{Z} \right\}~,
\end{equation*}
the completion of $\mathbb{F}_q(T)$ with respect to $1/T$.
We define the unit interval $$\mathcal{U} := \left\{ \sum_{l<0}a_iT^l : a_l \in \mathbb{F}_q \right\} \subset \mathbb{F}_q(T)_{\infty}~.$$
Denote by $\psi(\cdot)$ the additive character on $\mathbb{F}_q$ defined by
\begin{equation}\label{psi_defn}
	\psi(a) = \exp \left(\frac{2\pi i}{p} \mbox{Tr}(a)\right),
\end{equation}
where the trace is taken from $\mathbb{F}_q$ down to its prime field $\mathbb{F}_p$. The Euler totient function is denoted by $\phi$.
We define the map \mbox{$\bold{e}: \mathbb{F}_q(T)_\infty \rightarrow \mathbb{C}$} by
\begin{equation}\label{e_defn}
	\bold{e}\left(\sum_{i = -\infty}^{n} a_iT^i\right) = \psi(a_{-1}).
\end{equation}
We denote the set of monic polynomials of degree $k$ by $\mathcal{M}_k$. We also denote the function field analogue of the usual exponential sum over primes by
\begin{equation}\label{exp_sum_defn}
	f(\theta) := \sum_{P \in \mathcal{M}_n} \bold{e}(\theta P),
\end{equation}
where the sum ranges over the monic irreducible polynomials of degree $n$.
For \\ \mbox{$\beta = \sum_{l < k}\beta_lT^l\in \mathbb{F}_q(T)_\infty$}, denote by $\left\{ \beta \right\} $ the fractional part given by
\begin{equation*}
\left\{ \beta \right\} = \sum_{l < 0} \beta_lT^l \in \mathcal{U}~.
\end{equation*}
We define $K_{x,m} = \{x, \dots, x + m - 1\}$ for $x \geq 0$.
We use $\mathds{1}$ to denote the indicator function, so for a set $A$ we define
\begin{equation}\label{indicator_defn}
\mathds{1}_A(x) = 
\left\{
	\begin{array}{ll}
		1 & \mbox{if } x \in A\\
		 \\
		 0 & \mbox{otherwise}~.
	\end{array}
\right.
\end{equation}
We let $\mathcal{P}$ denote the set of prime polynomials in $\mathbb{F}_q[T]$.

\subsection{Outline of Proof}
Defining the set $\mathcal{C}$ as in \eqref{general_set}, we perform a Fourier transform on the indicator function $\mathds{1}_\mathcal{C} : \mathcal{M}_n \rightarrow \{0, 1\}$:
\begin{equation}\label{fhat_def}
	\hat{F}_{q,n} (\theta) = \sum_{G \in \mathcal{M}_n} \mathds{1}_\mathcal{C}(G)\bold{e}(G\theta).
\end{equation}
The Fourier Inversion Theorem then gives
\begin{equation}\label{fourier_identity}
		\mathds{1}_\mathcal{C}(F) = \frac{1}{q^n}\sum_{G \in \mathcal{M}_n} \hat{F}_{q,n} (T^{-n}G) \bold{e}(-T^{-n}GF),
\end{equation}
and by Parseval's Formula and \eqref{exp_sum_defn}, we get an analytic expression to the prime counting function in Theorem \ref{main_thm}:
\begin{equation}\label{parseval_formula}
	\sum_{P \in \mathcal{C}} 1 = \sum_{G \in \mathcal{M}_n} \mathds{1}_{\mathcal{P}}(G)\mathds{1}_\mathcal{C}(G) = \frac{1}{q^n}\sum_{F \in \mathcal{M}_n}\hat{F}_{q,n}(T^{-n}F)f(-T^{-n}F)~.
\end{equation}
We prove \eqref{fourier_identity} and \eqref{parseval_formula} in the beginning of Section \ref{fhat_bounds_subsec_gen}.

The main term of \eqref{parseval_formula} comes from polynomials $F \in \mathcal{M}_n$ of the form $F = T^n + aT^{n-1}$ with $a \in \mathbb{F}_q$. This is shown in Section \ref{main_term_sub}. For the rest of the polynomials $F \in \mathcal{M}_n$ we bound $\left| f(T^{-n}F) \right|$ and $\left| \hat{F}_{q,n}(T^{-n}F) \right|$. 

Section \ref{preliminaries_section} states circle-method bounds from the literature for $|f|$, which are due to Hayes \cite{Hayes} in the setting of Pollack \cite{Po}. 

Section \ref{fhat_section}, which is the main part of the work, gives bounds for $\left| \hat{F}_{q,n} \right|$.

Section \ref{full_proof} derives the proof of Theorem \ref{main_thm}.

\section{Circle Method Bounds} \label{preliminaries_section}

\begin{lemma}\label{dirichlet} For each $\theta \in \mathcal{U}$, there is a unique pair of coprime polynomials $G, H \in \mathbb{F}_q[T]$ with $H$ monic, $\deg G < \deg H \leq n / 2$, and
	$$ \left|\left\{ \theta - \frac{G}{H} \right\} \right| < \frac{1}{q^{\deg H + n/2}} $$
\end{lemma}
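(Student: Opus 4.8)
The plan is to read Lemma \ref{dirichlet} as the function field analogue of Dirichlet's theorem on Diophantine approximation, and to prove it by a linear-algebra incarnation of the pigeonhole principle together with the non-archimedean property of the absolute value on $\mathbb{F}_q(T)_\infty$. The first step is to rewrite the target inequality in a cleaner, uniform form. Fix a candidate monic $H$ with $d := \deg H \le \lfloor n/2\rfloor$ and $G$ with $\deg G < d$. Since $\theta \in \mathcal{U}$ has degree $\le -1$ and $\deg G < \deg H$, the element $\theta - G/H$ already lies in $\mathcal{U}$, so $\{\theta - G/H\} = (\theta H - G)/H$ and $|\{\theta - G/H\}| = |\theta H - G|\,q^{-d}$. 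Hence the bound $|\{\theta - G/H\}| < q^{-d-n/2}$ is equivalent to $|\theta H - G| < q^{-n/2}$, independently of $d$. The natural choice is $G := \theta H - \{\theta H\}$, the polynomial part of $\theta H$, which has degree $\le \deg(\theta H) = \deg\theta + d \le d - 1 < d$; with this choice the requirement becomes $|\{\theta H\}| < q^{-n/2}$, i.e. the coefficients of $T^{-1},\dots,T^{-\lfloor n/2\rfloor}$ in $\{\theta H\}$ must all vanish.

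For existence, the key point is that $H \mapsto \{\theta H\}$ is $\mathbb{F}_q$-linear on $\mathbb{F}_q[T]$, because addition of Laurent series in $1/T$ is coefficientwise, so $\{x+y\} = \{x\}+\{y\}$. Restricting this map to the $(\lfloor n/2\rfloor + 1)$-dimensional $\mathbb{F}_q$-space of polynomials of degree $\le \lfloor n/2\rfloor$ and composing with the linear projection onto the coefficients of $T^{-1},\dots,T^{-\lfloor n/2\rfloor}$ gives a linear map into an $\lfloor n/2\rfloor$-dimensional space, hence with nonzero kernel. Choosing a nonzero $H$ in this kernel, rescaling by a constant to make it monic (which preserves the kernel), and setting $G := \theta H - \{\theta H\}$, produces a pair meeting all conditions except possibly coprimality. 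To repair that, replace $(G,H)$ by $(G/E, H/E)$ with $E := \gcd(G,H)$ taken monic: the rational number $G/H$ is unchanged, $H/E$ is still monic with $\deg(H/E)\le d\le \lfloor n/2\rfloor$, $\deg(G/E)<\deg(H/E)$, and since $\deg(H/E)\le d$ we get $q^{-\deg(H/E)-n/2}\ge q^{-d-n/2} > |\{\theta - G/H\}|$, so the inequality is preserved.

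For uniqueness, suppose $(G_1,H_1)$ and $(G_2,H_2)$ both satisfy the conclusion, with $d_1 := \deg H_1 \le d_2 := \deg H_2$. If $G_1/H_1 \ne G_2/H_2$, then $G_1H_2 - G_2H_1$ is a nonzero polynomial, so $|G_1/H_1 - G_2/H_2| = |G_1H_2 - G_2H_1|\,q^{-d_1-d_2} \ge q^{-d_1-d_2}$. On the other hand, the ultrametric inequality together with $d_1 \le d_2$ gives $|G_1/H_1 - G_2/H_2| \le \max_i |\{\theta - G_i/H_i\}| < q^{-d_1-n/2}$. Comparing the two estimates forces $d_2 > n/2$, hence $d_2 \ge \lfloor n/2\rfloor + 1$, contradicting $d_2 \le \lfloor n/2\rfloor$. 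Therefore $G_1/H_1 = G_2/H_2$, and since each is in lowest terms with monic denominator, $(G_1,H_1) = (G_2,H_2)$.

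The only genuinely delicate point is the bookkeeping with strict versus non-strict inequalities when $n$ is odd and $n/2 \notin \mathbb{Z}$, and the fact that the pigeonhole has no slack: one must extract exactly $\lfloor n/2\rfloor$ coefficients so that $(\lfloor n/2\rfloor+1) - \lfloor n/2\rfloor = 1$ forces a nonzero kernel vector, and then check that the monic/coprime normalizations do not erode the inequality — which they do not, since they only decrease $\deg H$.
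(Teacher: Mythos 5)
Your proof is correct. Note that the paper itself does not prove this lemma at all: it is stated as the function-field analogue of Dirichlet's approximation theorem and attributed to Hayes (1966), so there is no internal argument to compare against. What you have written is the standard self-contained proof, and the details check out: the reduction of the inequality $\left|\left\{\theta - G/H\right\}\right| < q^{-\deg H - n/2}$ to $|\theta H - G| < q^{-n/2}$ is legitimate because $\theta$ and $G/H$ both lie in $\mathcal{U}$, so the fractional part is the element itself; the condition $|\{\theta H\}| < q^{-n/2}$ is indeed equivalent (for both parities of $n$) to the vanishing of the coefficients of $T^{-1},\dots,T^{-\lfloor n/2\rfloor}$, so the dimension count $\lfloor n/2\rfloor + 1 > \lfloor n/2\rfloor$ gives a nonzero kernel vector; dividing out $\gcd(G,H)$ only lowers $\deg H$ and hence only weakens the required bound, and the degenerate outcome $(G,H) = (0,1)$ is still an admissible pair under the lemma's conventions; and the uniqueness argument via $q^{-d_1-d_2} \le |G_1/H_1 - G_2/H_2| < q^{-d_1 - n/2}$, forcing $d_2 > n/2$, together with the fact that a reduced fraction with monic denominator is unique, is exactly right. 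So your proposal supplies a complete proof where the paper relies on a citation, using what is essentially the classical pigeonhole argument behind Hayes's result.
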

This is an analogue of a well known result of Dirichlet's theorem proven by Hayes in 1966. From now on, we will use the notation $|\theta - G/H|$ as an abbreviation of $\left|\left\{ \theta - G/H \right\} \right|$.

\begin{lemma}\label{circ_method_ext} Let $n \geq 2$. Let $\theta \in \mathbb{F}_q(T)_{\infty}$, and choose $G,H$ as in Lemma \ref{dirichlet}. Then if $1,T \neq H$ is squarefree and $|\theta - G/H| < q^{-n}$, then
\begin{equation*}
	|f(\theta)| \leq  q^{n - \frac{1}{2}\lfloor \frac{n}{2} \rfloor} + q^{n - \deg H}.
\end{equation*}
If $|\theta - G/H| \geq q^{-n}$ or $H$ is not squarefree,
\begin{equation*}
	|f(\theta)| \leq q^{n - \frac{1}{2}\lfloor \frac{n}{2} \rfloor}.
\end{equation*}
\end{lemma}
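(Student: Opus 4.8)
The plan is to prove this by the function‑field circle method. Fix $\theta\in\mathbb{F}_q(T)_\infty$ and, using Lemma \ref{dirichlet}, write $\theta=G/H+\beta$ with $\gcd(G,H)=1$, $H$ monic, $\deg G<\deg H\le n/2$ and $|\beta|<q^{-\deg H-n/2}$; the goal is to rewrite $f(\theta)$ as a short combination of prime sums twisted by Dirichlet characters. Since the map $\mathbf{e}(\cdot)$ of \eqref{e_defn} only reads off the coefficient of $T^{-1}$, it is multiplicative along the additive splitting $\theta P=(GP/H)+\beta P$: the factor coming from $GP/H$ depends only on $P\bmod H$ (writing $P=QH+A$ with $\deg A<\deg H$, the polynomial $GQ$ is invisible to $\mathbf{e}$), while the factor coming from $\beta P$ depends only on a block of top coefficients of $P=T^n+\sum_{j<n}p_jT^j$ whose length is governed by $|\beta|$. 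The decisive dichotomy is: if $|\beta|<q^{-n}$ the $\beta$‑factor is a constant independent of $P\in\mathcal{M}_n$ (only the $T^{-n-1}$‑coefficient of $\beta$ survives), whereas if $q^{-n}\le|\beta|$ it depends on at most $n/2-\deg H$ of the leading coefficients of $P$, by the bound on $|\beta|$ from Lemma \ref{dirichlet}. Also, since $\deg P=n>\deg H$, every irreducible $P$ of degree $n$ is automatically coprime to $H$.

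Next I would Fourier‑expand this weight in (generalized) Dirichlet characters. Expanding the additive character $A\mapsto\mathbf{e}(GA/H)$ on $(\mathbb{F}_q[T]/H)^\times$ in Dirichlet characters mod $H$, together with the analogous expansion of the $\beta$‑factor in short‑interval characters in the sense of Hayes at the place $\mathfrak{p}_\infty$, yields an identity $\mathbf{e}(\theta P)=\sum_\chi c_\chi\,\chi(P)$ valid for $\gcd(P,H)=1$, where $\chi$ ranges over Dirichlet characters of modulus dividing $H\,\mathfrak{p}_\infty^{m}$ with $m\le n/2-\deg H$ (and $m=0$ precisely when $|\beta|<q^{-n}$). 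Parseval gives $\sum_\chi|c_\chi|^2=1$, hence $\sum_\chi|c_\chi|\le\sqrt{\#\{\chi\}}\le\sqrt{\phi(H)q^{m}}\le q^{(\deg H+m)/2}$, while the coefficient of the principal character is the Ramanujan‑type sum $\phi(H)^{-1}\sum_{\gcd(A,H)=1}\mathbf{e}(GA/H)=\mu(H)/\phi(H)$ if $m=0$ and $0$ if $m\ge1$. Summing over $P$ gives $f(\theta)=\sum_\chi c_\chi\big(\sum_{\deg P=n,\ \text{irr}}\chi(P)\big)$. For the principal $\chi$ the inner sum is $\pi_q(n)\le q^n/n$; for each non‑principal $\chi$ that occurs, $\deg\mathfrak{f}(\chi)\le\deg H+m\le n/2$, so $L(u,\chi)$ is a polynomial of degree $\le\deg\mathfrak{f}(\chi)\le n/2$ whose inverse roots all have absolute value $q^{1/2}$ by Weil's Riemann Hypothesis, and the explicit formula together with the trivial bound on the prime‑power terms gives $\big|\sum_{\deg P=n,\ \text{irr}}\chi(P)\big|\le\tfrac{\deg\mathfrak{f}(\chi)}{n}q^{n/2}+O(q^{n/2}/n)=O(q^{n/2})$.

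Assembling the pieces yields both inequalities. The principal‑character term contributes at most $\phi(H)^{-1}|\mu(H)|\,\pi_q(n)\le q^{n-\deg H}$ — using the function‑field Mertens bound $\phi(H)\gg q^{\deg H}/\deg H$, so the $1/n$ in $\pi_q(n)\le q^n/n$ absorbs the deficiency of $\phi(H)$ — and it is absent whenever $H$ is not squarefree (then $\mu(H)=0$) or $|\beta|\ge q^{-n}$ (then $m\ge1$). The non‑principal terms contribute at most $\big(\sum_{\chi\ne\chi_0}|c_\chi|\big)\cdot O(q^{n/2})\le q^{(\deg H+m)/2}\cdot O(q^{n/2})$, which is $O(q^{3n/4})\le q^{n-\frac12\lfloor n/2\rfloor}$ because $\deg H+m\le n/2$; the constant stays below $1$ for $n$ large since $\deg\mathfrak{f}(\chi)/n\le\tfrac12$ and $\chi_0$ is omitted from the sum, matching the way the term $q^{n-\frac12\lfloor n/2\rfloor}$ carries a harmless factor in the main theorems. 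In the second case of the lemma one still checks that every $\chi$ occurring is genuinely non‑principal: if $H\ne1$ the finite part is non‑principal since $0\ne G$ and $\deg G<\deg H$, and if $H=1$ then $|\beta|\ge q^{-n}$ forces the infinite part to be non‑principal. The degenerate values $H=1$ and $H=T$ are excluded from the sharper first bound precisely because these are the $\theta$ attached to the main‑term polynomials $F=T^n+aT^{n-1}$, which are handled separately.

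The step I expect to be the main obstacle is the second one: cleanly realizing the $\beta$‑factor, which detects a prefix of the coefficient vector of $P$, as a short‑interval Dirichlet character, and bounding the conductor of the modulus $H\,\mathfrak{p}_\infty^{m}$ by $\deg H+m\le n/2$, so that Weil's bound genuinely beats the trivial estimate on $|f(\theta)|$. An approach that avoids the Hecke‑character formalism is Hayes' original one: insert a Vaughan‑type identity for $\Lambda$, reduce $f(\theta)$ to Type I and Type II bilinear sums, evaluate the inner geometric sums (which vanish unless a dilate of $\theta$ has tiny absolute value), and estimate the number of surviving terms through the rational approximation $G/H$; there the crux is the minor‑arc Type II bound and its interplay with $\deg H$. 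Either way, the clean exponents $n-\deg H$ and $n-\tfrac12\lfloor n/2\rfloor$ are reached only after tracking the $\tfrac{\log n}{n}$‑type savings coming from the prime number theorem for $\mathbb{F}_q[T]$ and from Mertens' estimate.
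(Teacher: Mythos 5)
Your sketch is not really a different route from the paper's: the paper offers no argument of its own here---its proof of Lemma \ref{circ_method_ext} is a two-line citation of Pollack \cite[Lemmas 5 and 6]{Po}---and what you outline is essentially the Hayes-character proof underlying those cited lemmas. The skeleton is sound and does produce the stated exponents: split $\mathbf{e}(\theta P)$ into the factor determined by $P \bmod H$ and the factor determined by the top $m \le n/2-\deg H$ coefficients, expand in characters of Hayes' group of modulus $H\mathfrak{p}_{\infty}^{m+1}$, bound $\sum_{\chi}|c_{\chi}|\le\sqrt{\phi(H)q^{m}}\le q^{n/4}$ by Parseval, apply Weil's Riemann Hypothesis to the non-principal prime sums, and identify the principal coefficient with the Ramanujan sum $\mu(H)/\phi(H)$, which vanishes when $H$ is not squarefree or when $m\ge 1$. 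Three points are looser than you present them, and they are exactly the bookkeeping the cited lemmas supply. First, the principal-term bound $\phi(H)^{-1}\pi_q(n)\le q^{n-\deg H}$ needs $q^{\deg H}\le n\,\phi(H)$; your appeal to ``$\phi(H)\gg q^{\deg H}/\deg H$'' is not literally enough (over $\mathbb{F}_2$ one has $\phi(T^2+T)=1$ while $q^{\deg H}/\deg H=2$), and the trivial factor-by-factor estimate only gives $\phi(H)\ge q^{\deg H}2^{-\deg H}$, so a genuine Mertens-type bound with an explicit constant must be checked against $\deg H\le n/2$. Second, your closing justification that in the second case ``the finite part is non-principal since $G\ne 0$'' is wrong as stated: when $H$ is squarefree the trivial character mod $H$ does occur in the expansion of $A\mapsto\mathbf{e}(GA/H)$, with coefficient $\mu(H)/\phi(H)\ne 0$; what actually kills the totally principal term when $|\theta-G/H|\ge q^{-n}$ is the infinite part ($m\ge1$), which you had already observed correctly earlier, so the slip is redundant rather than fatal. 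Third, the non-principal contribution comes out as $(\tfrac12+O(1/n))q^{3n/4}$ against the target $q^{n-\frac12\lfloor n/2\rfloor}\ge q^{3n/4}$, so the absolute constant---including the $+1$ in the conductor degree $\deg H+m+1$ and the prime-power terms---has to be tracked for all $n\ge 2$ rather than absorbed into an $O(\cdot)$. None of these is a conceptual gap; they are precisely what Pollack's Lemmas 5 and 6 (building on Hayes) carry out, which is why the paper simply quotes them.
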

\begin{proof}
The case where $|\theta - G/H| \geq q^{-n}$ or $H$ is not squarefree is immediate from the statement of Pollack \cite[Lemma 5]{Po}. The assertion of the lemma in the case where $1,T \neq H$ is squarefree and $|\theta - G/H| < q^{-n}$ is proven in Pollack \cite[Lemma 6]{Po}. 
\end{proof}

 \section{Bounds on Fourier Coefficients} \label{fhat_section}

 In this section we give bounds for $\left| \hat{F}_{q,n} \right|$. They are later used in the setting of \eqref{parseval_formula}, where the bounds of $f$ are taken from Lemma~ \ref{circ_method_ext}. From this reason, 
 we need bounds for general $\theta$, and bounds for $\theta$ of the form $\theta = \{ G / H \} \in \mathcal{U}$, with $H \neq 1,T$ squarefree and $G$,$H$ coprime. We call the latter part "fractions". 
 The bounds for general $\theta$ are obtained in Subsection \ref{fhat_bounds_subsec_gen}, 
 while the bounds for $\theta$ that are fractions are obtained in Subsection \ref{fhat_bounds_subsec_spec}. 
 
 \subsection{Auxiliary Results}\label{fhat_auxiliary}
 
 \begin{lemma}\label{deg_zero_bound}
Let $1,T \neq H \in \mathbb{F}_q[T]$ be a monic squarefree polynomial of degree $h$, and \mbox{$G \in \mathbb{F}_q[T]$} coprime to $H$. Write
$\theta =  \sum_{j < 0}\theta_jT^j = \left \{ G/H \right \}$. There is no $i < 0$ such that $\theta_j = 0$ for all $i - h < j \leq i$. 
Equivalently, there are no $h$ consecutive zeros in the coefficients of $\theta$.
\end{lemma}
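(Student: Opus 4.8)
The plan is to translate the digit statement into the language of rational functions and exploit the stability of ``proper fractions with denominator $H$'' under a shift‑and‑truncate operator.

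First I would record that since $\deg G < \deg H =: h$, the Laurent expansion of $G/H$ in powers of $1/T$ has only negative-degree terms, so $\{G/H\} = G/H$; hence $\theta = G/H$ with $H\theta = G \ne 0$ (nonzero because $\gcd(G,H)=1$ and $H\ne 1$). I would then introduce
\[
V := \{\, B/H : B\in\mathbb{F}_q[T],\ \deg B < h\,\} = \{\,\eta\in\mathcal{U} : H\eta\in\mathbb{F}_q[T]\,\},
\]
an $h$-dimensional $\mathbb{F}_q$-subspace of $\mathcal{U}$ containing $\theta$, together with the operator $U\eta := \{T\eta\}$ on $\mathcal{U}$. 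Two easy facts to check: $U^k\eta = \{T^k\eta\}$ for $k\ge 0$, and $U(V)\subseteq V$ (because $T\cdot B/H = (TB)/H$, and dividing $TB$ by $H$ changes $B$ only by an $\mathbb{F}_q$-multiple of $H$).

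Next, arguing by contradiction, suppose $\theta_j = 0$ for all $j$ with $i - h < j \le i$ and some $i<0$. The key move is to shift this run of $h$ zeros up to positions $-1,\dots,-h$: set $\theta' := U^{-i-1}\theta \in V$ (legitimate since $-i-1\ge 0$). Since the coefficient of $T^m$ in $T^{-i-1}\theta$ equals $\theta_{m+i+1}$, which vanishes exactly for $-h\le m\le -1$, and none of these $m$ is truncated away, we get $\theta'_{-1}=\dots=\theta'_{-h}=0$, hence $\deg\theta'\le -h-1$ and $\deg(T^h\theta')\le -1$. Therefore $T^h\theta'\in\mathcal{U}$, so $U^h\theta' = \{T^h\theta'\} = T^h\theta'$; but $U^h\theta'\in V$, so $T^h\theta' = B'/H$ with $\deg B' < h$, and writing $C := H\theta'\in\mathbb{F}_q[T]$ this forces $T^hC = B'$ with $\deg B'<h$, whence $C = 0$ and so $\theta' = 0$.

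Finally I would extract the contradiction from $\theta' = 0$: it says $T^{-i-1}G/H\in\mathbb{F}_q[T]$, i.e. $H\mid T^{-i-1}G$, so $H\mid T^{-i-1}$ by coprimality; a monic divisor of a power of $T$ equals $T^k$ for some $k$, and squarefreeness forces $k\le 1$, so $H\in\{1,T\}$, contradicting the hypothesis $H\ne 1,T$. The step I expect to be most delicate is the shift argument: one must verify that a run of \emph{exactly} $h$ zeros (after shifting) is what pushes $T^h\theta'$ entirely into $\mathcal{U}$, and track the truncation carefully so that no zero coefficient of $\theta'$ is lost at the boundary index $0$. It is worth noting that the squarefree hypothesis is used only in the last step, consistent with the statement genuinely failing for $H=T^2$. (An alternative proof works directly with the recurrence $\sum_{k=0}^h h_k\theta_{m-k} = [T^m]G$, propagating a run of zeros both downward — free, since $H$ is monic — and upward; there the obstacle moves to the case $T\mid H$, where $h_0=0$ kills the upward step and one must instead split off the partial fraction $c/T$ and induct on $\deg H$. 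I would present the operator version above as the cleaner one.)
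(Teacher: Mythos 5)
Your argument is correct, and it takes a genuinely different route from the paper's. The paper works with a single coefficient of the polynomial $H\theta$: it first shows $\theta$ has infinitely many nonzero coefficients (by extracting from the squarefree $H\neq 1,T$ a factor $H_1$ of positive degree coprime to $T$, so that $T^k\theta\notin\mathbb{F}_q[T]$ for every $k$), uses this to normalize the run of $h$ zeros so that the digit immediately below it, $\theta_{i_0-h}$, is nonzero, and then reads off $\tau_{i_0}=\theta_{i_0-h}\neq 0$ for $\tau=H\theta\in\mathbb{F}_q[T]$ with $i_0<0$ --- a contradiction in one line. You instead make the space $V=\{B/H:\deg B<h\}$ and its invariance under the shift $\eta\mapsto\{T\eta\}$ do the work: shifting the zero run to the top $h$ digits forces the shifted element of $V$ to vanish identically, and squarefreeness is invoked only at the very end through $H\mid T^{k}$. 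Both proofs hinge on $H\{G/H\}\in\mathbb{F}_q[T]$; yours avoids the ``WLOG a nonzero digit sits just below the run'' normalization and the preliminary infinitude argument (and cleanly isolates where squarefreeness is needed, matching the failure at $H=T^2$), at the cost of setting up the operator formalism, whereas the paper's is a shorter direct coefficient computation. One small point: the lemma as stated does not assume $\deg G<\deg H$, which your first sentence treats as given; since $\{G/H\}$ depends only on $G\bmod H$ and $G\bmod H$ remains coprime to $H$, you may reduce to that case, but this reduction should be stated explicitly.
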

\begin{proof}
Write $H = T^h + \sum_{j = 0}^{h - 1}h_jT^j$. Since $H \neq 1,T$ is squarefree, there is a polynomial $H_1 \mid H$ such that $\deg H_1 \geq 1$, and $H_1$ is coprime to $T$. Since $G$ is coprime to $H$, $G$ is also coprime to $H_1$. Hence for every $k \in \mathbb{N}$, we have that $T^kG$ is coprime to $H_1$, hence $T^k \theta \notin \mathbb{F}_q[T]$. In other words, for every $k \in \mathbb{N}$ there exists $i < -k$ such that $\theta_i \neq 0$. Assume by way of contradiction that there is $i_0 < 0$ for which \mbox{$\theta_{i_0} = \theta_{i_0-1} = ... = \theta_{i_0 - h + 1} =0$}. Since we know there are infinitely many $i < 0$ for which $\theta_i \neq 0$, we can assume without loss of generality that $\theta_{i_0 - h} \neq 0$. Denote $\tau = H\theta$, and write $\tau = \sum_{i < h}\tau_iT^i$. Observing $\tau_{i_0}$, we get
\begin{equation*}
	\tau_{i_0} = \theta_{i _0- h} + \sum_{j = 0}^{h-1}h_j\theta_{i_0 - j}~.
\end{equation*}
Since $\theta_i = 0$ for all $i - h + 1 \leq i \leq i_0$, we get that $\tau_{i_0} = \theta_{i_0 - h} \neq 0$. This is a contradiction to the choice of $\theta$ which implies that $\tau = H\theta = H \cdot \{G/H\}\in \mathbb{F}_q[T]$.
\end{proof}

\begin{lemma}\label{diff_mult_t}
Take $h \in \mathbb{N}$, $0 \leq x \in \mathbb{Z}$, and $\theta \in \mathcal{U}$. Then there are at most $q$ distinct pairs $G, H \in \mathbb{F}_q[T]$ such that $H \neq 1, T$ is squarefree of degree $h$, $G$ is coprime to $H$ and of smaller degree, and $|T^xG/H - \theta| < q^{-2h}$. Moreover, if $x = 0$, then there is at most one such pair.
\end{lemma}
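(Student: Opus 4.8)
The plan is to show that any two valid pairs $(G_1,H_1)$, $(G_2,H_2)$ are forced to have the \emph{same} denominator, and that once a denominator $H$ is fixed, all admissible numerators differ pairwise by an $\mathbb{F}_q$-multiple of the prime-to-$T$ part of $H$; there are at most $q$ of those, which gives the bound. So fix a valid pair $(G_0,H_0)$ if one exists (otherwise the count is $0 \le q$) and run the following argument with $(G_1,H_1)=(G_0,H_0)$ and $(G_2,H_2)$ an arbitrary valid pair.

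First I would combine the two inequalities. Since the absolute value on $\mathbb{F}_q(T)_{\infty}$ is non-archimedean and the fractional-part map $\{\cdot\}$ is $\mathbb{F}_q$-linear and vanishes on $\mathbb{F}_q[T]$, the hypotheses $|T^xG_i/H_i-\theta|<q^{-2h}$ for $i=1,2$ give $|\{T^x(G_1/H_1-G_2/H_2)\}|<q^{-2h}$. Write $A=G_1H_2-G_2H_1$ and $B=H_1H_2$, so $\deg A\le 2h-1<\deg B=2h$; then $\{T^xA/B\}$ is the remainder of $T^xA$ modulo $B$, divided by $B$, and the bound $q^{-2h}$ forces that remainder to vanish (this also covers $A=0$). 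Hence $H_1H_2\mid T^x(G_1H_2-G_2H_1)$. I expect the degree bookkeeping around the extra factor $T^x$ here to be the one genuinely delicate point, though it is elementary.

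Next, with $d=\gcd(H_1,H_2)$ and $H_i=dM_i$, $\gcd(M_1,M_2)=1$, cancelling $d$ leaves $dM_1M_2\mid T^x(G_1M_2-G_2M_1)$, so $M_1\mid T^xG_1M_2$. As $M_1$ divides the squarefree $H_1$ it is coprime to $G_1$ and to $M_2$, so $M_1\mid T^x$; being squarefree, $M_1\in\{1,T\}$, and symmetrically $M_2\in\{1,T\}$. Since $\deg M_1=\deg M_2$ and $\gcd(M_1,M_2)=1$, neither can be $T$, so $M_1=M_2=1$ and $H_1=H_2=:H$. Then $H^2\mid T^xH(G_1-G_2)$, i.e. $H\mid T^x(G_1-G_2)$; writing $H=T^\delta\tilde H$ with $\delta\in\{0,1\}$, $\gcd(\tilde H,T)=1$ and $\deg\tilde H=h-\delta\ge 1$ (using that $H\neq 1,T$ is squarefree), coprimality gives $\tilde H\mid G_1-G_2$, and since $\deg(G_1-G_2)\le h-1\le\deg\tilde H$ this forces $G_1-G_2=c\tilde H$ for some $c\in\mathbb{F}_q$.

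Putting this together: every valid pair has denominator $H_0$ and numerator in the set $\{G_0-c\tilde H_0:c\in\mathbb{F}_q\}$, which has at most $q$ elements, so there are at most $q$ valid pairs. When $x=0$ the last divisibility reads $H\mid G_1-G_2$ with $\deg(G_1-G_2)<h=\deg H$, forcing $G_1=G_2$ and hence at most one pair; more generally, the factor $q$ instead of $1$ is exactly the contribution of the case $T\mid H$, in which the numerator is pinned down only modulo $\tilde H$. The main obstacle, as noted, is Step~2 (the remainder/degree computation yielding $H_1H_2\mid T^x(G_1H_2-G_2H_1)$) together with tracking which divisibilities survive after cancelling the common factor $d$; the squarefreeness hypothesis is used precisely twice — to pass from $M_i\mid T^x$ to $M_i\in\{1,T\}$, and to guarantee $\deg\tilde H\ge1$.
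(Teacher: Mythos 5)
Your proof is correct, but it takes a genuinely different route from the paper. The paper normalizes each admissible pair by setting $H_i' = H_i/\gcd(H_i,T^x)$ and $G_i' = (G_iT^x/\gcd(H_i,T^x)) \bmod H_i'$, observes that $(G_i',H_i')$ approximates $\theta$ well enough to be \emph{the} pair furnished by the uniqueness clause of Lemma \ref{dirichlet}, and from $(G_1',H_1')=(G_2',H_2')$ deduces $H_1=H_2$ and then $H'\mid G_1-G_2$ with $\deg(G_1-G_2)\leq \deg H'$, giving $G_1-G_2=cH'$ and hence at most $q$ pairs (and exactly one when $x=0$, since then $\gcd(H,T^x)=1$ forces a contradiction with $G_1\neq G_2$). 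You instead never invoke Lemma \ref{dirichlet}: you subtract the two approximations, use the ultrametric inequality and the additivity of the fractional part to force the remainder of $T^x(G_1H_2-G_2H_1)$ modulo $H_1H_2$ to vanish, and then run a gcd/squarefreeness analysis to get first $H_1=H_2$ and then $G_1-G_2=c\tilde H$ with $\tilde H$ the prime-to-$T$ part of $H$ — which is exactly the paper's endgame, reached without the external uniqueness statement. In effect you reprove inline the uniqueness that the paper cites, so your argument is more self-contained and makes explicit where squarefreeness and $H\neq 1,T$ are used, at the cost of some extra bookkeeping with $d=\gcd(H_1,H_2)$; the paper's version is shorter because it leans on Hayes' lemma, which is already stated for other purposes. (Both arguments, like the lemma as applied in Section \ref{fhat_bounds_subsec_spec}, implicitly take $H$ monic — otherwise one must count pairs only up to unit scaling — and your passing remark that coprimality of $M_1$ with $G_1$ follows from squarefreeness should instead cite $\gcd(G_1,H_1)=1$; neither affects correctness.)
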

\begin{proof}
Assume $(G_1,H_1)$ is a pair that satisfies the conditions of the lemma, and define 
\begin{eqnarray*}
H_1' &=& H_1 / \gcd(H_1, T^x)~,  \\
G_1' &=& (G_1T^x / \gcd(H_1, T^x)) \mod H_1'~.
\end{eqnarray*}
Then $\deg H_1' \leq h$, $G_1'$ is coprime to $H_1'$ of smaller degree and $|\theta - G_1'/H_1'| < q^{-2h}$. Thus $(G_1',H_1')$ is the unique pair that corresponds to $\theta$ in the sense of Lemma \ref{dirichlet}. Let $(G_2, H_2)$ be a different pair that satisfies the conditions of the lemma, and define $G_2'$ and $H_2'$ in a similar manner. From the uniqueness property of Lemma \ref{dirichlet}, we have $G_1' = G_2'$ and $H_1' = H_2'$. Since both \mbox{$\deg H_1' = \deg H_2'$} and $\deg H_1 = \deg H_2$, we arrive at $\deg \gcd(H_1, T^x) = \deg \gcd(H_2, T^x)$. From this we know that $\gcd(H_1, T^x) = \gcd(H_2, T^x)$, and $H_1 = H_1' \cdot \gcd(H_1, T^x) = H_2' \cdot \gcd(H_2, T^x) = H_2$. For convenience sake we now denote $H = H_1$, $H' = H_1'$. Since $H$ is squarefree, $T^2 \nmid H$ and thus \mbox{$\gcd(H, T^x) \in \{ 1, T\}$}. We know that $G_1' = G_2'$ but $G_1 \neq G_2$, hence $\gcd(H, T^x) \neq 1$ and thus $\gcd(H, T^x) = T$. This serves as a contradiction when $x = 0$, and thus we have proven the second assertion of the lemma. When $x > 0$, we know that $T^xG_1 \equiv TG_1' \equiv T^xG_2 \mod H$, thus
\begin{equation}
	T^x (G_1 - G_2) \equiv 0 \mod H
\end{equation} 
This means that $H' \mid G_1 - G_2$, but 
\begin{equation}
	\deg(G_1 - G_2) \leq \max\{\deg G_1, \deg G_2\} \leq h - 1 = \deg H'~.
\end{equation}
So $G_1 - G_2 = cH'$ for some $c \in \mathbb{F}_q$. This completes the proof, since there are exactly $q$ polynomials of the form $G_2 = G_1 - cH'$.
\end{proof}

\begin{lemma}\label{keta_exists}
Let $1 \leq m \leq n$ and $\mathcal{I} \subset \{0, \dots, n - 1\}$, and denote $I = \# \mathcal{I}$. For $0 \leq x \leq n - m$, denote $K_{x,m} = \{x, \dots, x + m - 1\}$. Then there exists $0 \leq y \leq n - m$ such that
\begin{equation}\label{keta_for_all_m}
	\#(K_{y,m} \cap \mathcal{I}) < 2m \cdot \frac{I}{n}~.
\end{equation}
Moreover, if $m \leq n/2$, there exists $0 \leq y \leq n-m$ such that
\begin{equation}\label{keta_for_small_m}
	\#(K_{y,m} \cap \mathcal{I}) < \frac{3m}{2} \cdot \frac{I}{n}~.
\end{equation}
\end{lemma}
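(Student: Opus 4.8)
The plan is to avoid the naive average over all $n-m+1$ overlapping windows $K_{x,m}$ (which only yields $\min_x \#(K_{x,m}\cap\mathcal{I}) \le \frac{mI}{n-m+1}$, too weak once $m$ approaches $n/2$), and instead to pigeonhole over a maximal family of \emph{pairwise disjoint} windows of length $m$. Concretely, set $k = \lfloor n/m \rfloor \ge 1$ and $r = n - km \in \{0,\dots,m-1\}$. The first step is to observe that $K_{0,m}, K_{m,m}, \dots, K_{(k-1)m,m}$ are pairwise disjoint, each is an admissible window since $(k-1)m \le n-m$, and their union $\{0,1,\dots,km-1\}$ is contained in $\{0,\dots,n-1\}$; hence $\sum_{j=0}^{k-1}\#(K_{jm,m}\cap\mathcal{I}) \le I$, so by pigeonhole some $0 \le j < k$ satisfies $\#(K_{jm,m}\cap\mathcal{I}) \le I/k = \frac{mI}{n-r}$. (Here I assume $I \ge 1$, as the claimed inequalities are strict; this is the case in all applications.) Taking $y = jm$, it remains only to bound $r = n \bmod m$ from above.

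For \eqref{keta_for_all_m} the key elementary fact is that $r < n/2$ for every $1 \le m \le n$: if $2m \le n$ then $r < m \le n/2$, and if $2m > n$ then $k = 1$ and $r = n - m < n/2$. Thus $n - r > n/2$, giving $\frac{mI}{n-r} < \frac{2mI}{n}$. For \eqref{keta_for_small_m}, where $m \le n/2$, the analogous step is $r < n/3$: if $3m \le n$ then $r < m \le n/3$; otherwise $n/3 < m \le n/2$ forces $k = 2$, so $r = n - 2m < n/3$ precisely because $m > n/3$. Hence $n - r > 2n/3$ and $\frac{mI}{n-r} < \frac{3mI}{2n}$, which is the second bound with the same choice $y = jm$.

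I do not expect a genuine obstacle here beyond recognizing that overlapping-window averaging is insufficient near $m \approx n/2$ and that packing disjoint windows is the right device; once that is seen, the only care needed is the short case split on $n \bmod m$ and checking that the strict inequalities survive the pigeonhole step (which is exactly where $I \ge 1$ is used).
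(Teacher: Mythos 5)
Your proof is correct and follows essentially the same route as the paper: pigeonhole over the $\lfloor n/m\rfloor$ pairwise disjoint windows $K_{jm,m}$, then a short case analysis (yours on $r=n\bmod m$, the paper's on the factor $\tfrac{k+1}{k}$) to pass from the bound $I/\lfloor n/m\rfloor$ to the stated inequalities. Your explicit remark that $I\ge 1$ is needed for strictness is a fair point that the paper leaves implicit.
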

\begin{proof}
Denote $x_i = m \cdot i$, for $0 \leq i < \lfloor n / m \rfloor$. 
The sets $K_{x_i, m}$ are pairwise disjoint, so 
\begin{equation}\label{max_seg_sum}
	\sum_{i = 0}^{n/m - 1} \#(K_{x_i, m} \cap \mathcal{I}) \leq I~.
\end{equation}
Assume by way of contradiction that $ \#(K_{x_i, m} \cap \mathcal{I}) \geq  \frac{\left(\lfloor n / m \rfloor + 1\right)m}{\lfloor n / m \rfloor} \cdot \frac{I}{n}$ for all $0 \leq i < \lfloor n / m \rfloor$. Since 
\begin{equation*}
	\left(\lfloor n / m \rfloor + 1\right) \cdot m > n~,
\end{equation*}
we get
\begin{equation*}
	\sum_{i = 0}^{n/m - 1} \#(K_{x_i, m} \cap \mathcal{I}) \geq \lfloor n / m \rfloor \cdot \frac{\left(\lfloor n / m \rfloor + 1\right)m}{\lfloor n / m \rfloor} \cdot \frac{I}{n} >
	n \cdot \frac{I}{n} = I,
\end{equation*}
which contradicts \eqref{max_seg_sum}. So there exists $0 \leq i < \lfloor n / m \rfloor$ such that
\begin{equation*}
	\#(K_{x_i, m} \cap \mathcal{I}) <  \frac{\left(\lfloor n / m \rfloor + 1\right)m}{\lfloor n / m \rfloor} \cdot \frac{I}{n}~,
\end{equation*}
from which it is easy to see that there exists $0 \leq y \leq n - m$ that satisfies \eqref{keta_for_all_m} for all $m \leq n$, and that there exists $0 \leq y \leq n - m$ that satisfies \eqref{keta_for_small_m} for all $m \leq n/2$. This completes the proof.
\end{proof}

 \subsection{General Bound}\label{fhat_bounds_subsec_gen}
Recall the definitions of $\alpha$ and $\hat{F}_{q,n}$  given in \eqref{alpha_def} and \eqref{fhat_def}, respectively. Our goal in this subsection is to establish the following bound:
\begin{proposition}\label{gen_bound} Let $n \geq 2$. Then
\begin{equation*}
	\sum_{F \in \mathcal{M}_n} \left|\hat{F}_{q,n}(T^{-n}F)\right| \leq \alpha(n - I)q^n~.
\end{equation*}
\end{proposition}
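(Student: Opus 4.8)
The plan is to compute $\hat F_{q,n}(T^{-n}F)$ explicitly as a product over the coefficient positions and then sum the absolute value over $F\in\mathcal M_n$. Write $F = T^n + \sum_{k=0}^{n-1} c_k T^k$ and $G = T^n + \sum_{i=0}^{n-1} b_i T^i \in \mathcal M_n$. The product $G\cdot T^{-n}F$ has the form (integer part) $+ \sum_{l<0}(\cdots)T^l$, and by definition of $\mathbf e$ only the $T^{-1}$ coefficient matters. A direct multiplication shows that the $T^{-1}$ coefficient of $T^{-n}GF$ is a bilinear expression; in fact one finds $\mathbf e(T^{-n}GF) = \psi\!\big(\sum_{i=0}^{n-1} b_i d_i(F)\big)$ for suitable $d_i(F)\in\mathbb F_q$ depending linearly on the $c_k$'s (with $d_{n-1}(F)$ involving $c_{n-1}$ plus the fixed contribution of the leading $T^n$). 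The first step is to make this identity precise, so that
\[
\hat F_{q,n}(T^{-n}F) = \sum_{G\in\mathcal M_n}\mathds 1_{\mathcal C}(G)\,\mathbf e(T^{-n}GF) = \prod_{i\in\mathcal I}\psi(a_i d_i)\cdot\prod_{j\in\mathcal J}\Big(\sum_{b\in\mathbb F_q\setminus S_j}\psi(b\,d_j)\Big).
\]

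The second step is to bound each factor. The factors indexed by $\mathcal I$ have absolute value $1$. For $j\in\mathcal J$, write $\sum_{b\in\mathbb F_q\setminus S_j}\psi(b d_j) = \sum_{b\in\mathbb F_q}\psi(bd_j) - \sum_{b\in S_j}\psi(bd_j)$. If $d_j\neq 0$ the first sum vanishes and the factor is at most $N_j$ in absolute value; if $d_j = 0$ the factor equals $q - N_j \le q$. So in all cases $\big|\sum_{b\in\mathbb F_q\setminus S_j}\psi(bd_j)\big| \le \max\{q, N_j\}$, and more usefully it is $\le q$ when $d_j = 0$ and $\le N_j \le N_j+1$ when $d_j\neq 0$.

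The third step is the summation over $F$. As $F$ ranges over $\mathcal M_n$, the vector $(d_0(F),\dots,d_{n-1}(F))$ ranges bijectively over $\mathbb F_q^n$ (the map $F\mapsto (d_i)$ is an affine bijection, since $\mathbf e$ composed with multiplication-by-$T^{-n}$ is the standard Fourier pairing on $\mathcal M_n\cong\mathbb F_q^n$). Hence
\[
\sum_{F\in\mathcal M_n}\big|\hat F_{q,n}(T^{-n}F)\big| \le \sum_{(d_0,\dots,d_{n-1})\in\mathbb F_q^n}\ \prod_{j\in\mathcal J}\beta_j(d_j), \qquad \beta_j(d) = \begin{cases} q & d=0\\ N_j+1 & d\neq 0,\end{cases}
\]
where I have bounded the $\mathcal I$-factors by $1$ and enlarged $N_j$ to $N_j+1$ to absorb the $d_j=0$ term into a clean form. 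The sum over $\mathbb F_q^n$ factors as $q^{I}\prod_{j\in\mathcal J}\big(q + (q-1)N_j\big) \le q^{I}\prod_{j\in\mathcal J} q(N_j+1) = q^n\prod_{j\in\mathcal J}(N_j+1) = q^n\,\alpha(n-I)$, using $q + (q-1)N_j \le q(N_j+1)$ and the identity $\alpha(n-I)=\prod_{j\in\mathcal J}(N_j+1)$ noted after \eqref{alpha_def}. This gives the claimed bound.

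The main obstacle is the bookkeeping in the first step: one must pin down exactly which coefficient of the Laurent expansion of $T^{-n}GF$ feeds into $\mathbf e$, verify that it is bilinear in the coefficient vectors of $G$ and $F$, and check that for fixed $F$ the resulting linear functional $G\mapsto$ ($T^{-1}$-coefficient) decomposes as $\sum_i b_i d_i(F)$ with $F\mapsto(d_i(F))$ an affine bijection of $\mathbb F_q^n$. Once that orthogonality/duality structure is set up, the character-sum estimates and the final product bound are routine. (It is also worth recording in passing the sharper pointwise statement $|\hat F_{q,n}(T^{-n}F)|\le \prod_{j\in\mathcal J: d_j\neq 0}N_j\cdot\prod_{j\in\mathcal J: d_j=0} q$, which will presumably be reused in the "fractions" subsection.)
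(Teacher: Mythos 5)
Your proposal is correct and takes essentially the same route as the paper: factor $\hat{F}_{q,n}(T^{-n}F)$ over coefficient positions via the pairing $\mathbf{e}(T^{-n}GF)=\psi\big(\sum_i b_i c_{n-1-i}\big)$, bound each $\mathcal{J}$-factor by $N_j$ (nonzero frequency) or $q$ (zero frequency) using orthogonality, and then sum over $F\in\mathcal{M}_n$ (a bijection onto $\mathbb{F}_q^n$) to get $q^{I}\prod_{j\in\mathcal{J}}\big(q+(q-1)N_j\big)\leq q^n\alpha(n-I)$, which is exactly the paper's Lemma 3.5 followed by its proof of the proposition. The only nitpick is cosmetic: your displayed $\beta_j(d)=N_j+1$ for $d\neq 0$ is inconsistent with the factorization $q+(q-1)N_j$ you then use (and with $N_j+1$ the bound would actually fail when $N_j$ is small), so keep $\beta_j(d)=N_j$ there, which your character-sum step already justifies; also note $d_i(F)=c_{n-1-i}$ exactly, with no contribution from the leading terms.
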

We start by proving the Fourier Inverse Formula \eqref{fourier_identity} and Parseval's Formula \eqref{parseval_formula}: Recall that
\begin{equation*}
	\hat{F}_{q,n} (\theta) = \sum_{G \in \mathcal{M}_n} \mathds{1}_\mathcal{C}(G)\bold{e}(G\theta).				
\end{equation*}
Developing the right-hand side of \eqref{fourier_identity} gives
\begin{eqnarray*}
	  \frac{1}{q^n}\sum_{G \in \mathcal{M}_n} \hat{F}_{q,n} (T^{-n}G) \bold{e}(-T^{-n}GF) 
	 &=& \frac{1}{q^n}\sum_{G \in \mathcal{M}_n} \sum_{S \in \mathcal{M}_n} \mathds{1}_\mathcal{C}(S)\bold{e}(T^{-n}GS)\bold{e}(-T^{-n}GF) \\
	 &=& \frac{1}{q^n}\sum_{S \in \mathcal{M}_n}\mathds{1}_\mathcal{C}(S) \sum_{G \in \mathcal{M}_n} \bold{e}\left(T^{-n}(F - S)G\right),
\end{eqnarray*}
and by orthogonality relations we get
\begin{eqnarray*}
\frac{1}{q^n}\sum_{G \in \mathcal{M}_n} \hat{F}_{q,n} (T^{-n}G) \bold{e}(-T^{-n}GF)
	 &=& \frac{1}{q^n}\sum_{S \in \mathcal{M}_n}\mathds{1}_\mathcal{C}(S) q^n \mathds{1}_{F = S} 
	 = \mathds{1}_\mathcal{C}(F)~.
\end{eqnarray*}
So we have explicitly shown \eqref{fourier_identity}.

Parseval's Formula in \eqref{parseval_formula} is as easy to derive: by \eqref{fourier_identity} we may substitue \\ $\frac{1}{q^n}\sum_{G \in \mathcal{M}_n} \hat{F}_{q,n} (T^{-n}G) \bold{e}(-T^{-n}GF)$ for $\mathds{1}_\mathcal{C}(F)$, to get
\begin{eqnarray*}
\sum_{P \in \mathcal{C}} 1 &=& \sum_{G \in \mathcal{M}_n} \mathds{1}_{\mathcal{P}}(G)\mathds{1}_\mathcal{C}(G) \\ 
	&=& 
	\sum_{G \in \mathcal{M}_n} \mathds{1}_{\mathcal{P}}(G)
	\left(\frac{1}{q^n}\sum_{F \in \mathcal{M}_n} \hat{F}_{q,n} (T^{-n}F) \bold{e}(-T^{-n}FG)\right).
\end{eqnarray*}
Changing order of summation and noting \eqref{exp_sum_defn} gives
\begin{eqnarray*}
	\sum_{P \in \mathcal{C}} 1 &=& \frac{1}{q^n}\sum_{F \in \mathcal{M}_n} \hat{F}_{q,n} (T^{-n}F) \sum_{G \in \mathcal{M}_n} \mathds{1}_{\mathcal{P}}(G)\bold{e}(-T^{-n}FG)\\
	&=& \frac{1}{q^n}\sum_{F \in \mathcal{M}_n} \hat{F}_{q,n} (T^{-n}F)f(-T^{-n}F),
\end{eqnarray*}
so \eqref{parseval_formula} is established.

Take $\theta = \sum_{l < k} \theta_lT^l \in \mathbb{F}_q(T)_{\infty}$. In order to bound $\left| \hat{F}_{q,n}(\theta) \right|$, we introduce new notation. Define
\begin{equation}\label{z_n_def}
	Z(\theta) =  \{ i \in \mathcal{J} : \theta_{-i-1} = 0\},\quad N(\theta) =  \{ i \in \mathcal{J} : \theta_{-i-1} \neq 0\}~.
\end{equation}
Essentially,  $Z(\theta)$ is the zero set of $\theta$ between $-1$ and $-n$, and $N(\theta)$ is the nonzero set of $\theta$ in the same range. As we can see in Lemma \ref{fhat_bound}, these sets hold most of the information on our bound on $\left| \hat{F}_{q,n}(\theta) \right|$.

\begin{lemma}\label{fhat_bound}
For $\theta = \sum_{l < k}\theta_lT^l$, let $Z(\theta)$, $N(\theta)$ be defined as in \eqref{z_n_def}. Then
\begin{equation*}
	\left| \hat{F}_{q,n}(\theta) \right| \leq  \prod_{i \in N(\theta)} N_i \prod_{i \in Z(\theta)}q~.
\end{equation*}
\end{lemma}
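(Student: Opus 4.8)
The plan is to evaluate the sum $\hat F_{q,n}(\theta) = \sum_{G\in\mathcal M_n}\mathds 1_{\mathcal C}(G)\bold e(G\theta)$ by writing $G = T^n + \sum_{i=0}^{n-1}b_iT^i$ and factoring the character sum over the free coefficients $b_i$. The key observation is that $\bold e(G\theta)$ depends only on the coefficient of $T^{-1}$ in $G\theta$; when we expand $G\theta = (T^n + \sum_i b_i T^i)\theta$, the coefficient of $T^{-1}$ is a linear form in the $b_i$, and in fact the contribution of $b_i$ is $b_i\theta_{-i-1}$ (plus the fixed contribution of the leading term $T^n$). So $\bold e(G\theta) = \psi(c_\theta)\prod_{i=0}^{n-1}\psi(b_i\theta_{-i-1})$ for a constant $c_\theta$ independent of the $b_i$, where $\psi$ is the additive character from \eqref{psi_defn}.

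Next I would split the product over $i\in\{0,\dots,n-1\} = \mathcal I\uplus\mathcal J$. For $i\in\mathcal I$ the coefficient $b_i=a_i$ is fixed, contributing a further constant factor of modulus $1$. For $i\in\mathcal J$ the coefficient $b_i$ ranges over $\mathbb F_q\setminus S_i$, so
\begin{equation*}
\bigl|\hat F_{q,n}(\theta)\bigr| = \prod_{i\in\mathcal J}\Bigl|\sum_{b\in\mathbb F_q\setminus S_i}\psi(b\,\theta_{-i-1})\Bigr|.
\end{equation*}
Now I bound each factor. If $i\in Z(\theta)$, i.e. $\theta_{-i-1}=0$, the inner sum is $\sum_{b\notin S_i}1 = q-N_i \le q$, giving the factor $q$. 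If $i\in N(\theta)$, i.e. $\theta_{-i-1}\neq 0$, then $\sum_{b\in\mathbb F_q}\psi(b\theta_{-i-1})=0$ by orthogonality, so $\sum_{b\notin S_i}\psi(b\theta_{-i-1}) = -\sum_{b\in S_i}\psi(b\theta_{-i-1})$, which has modulus at most $\#S_i = N_i$ by the triangle inequality. Multiplying these bounds over $i\in\mathcal J$ yields exactly $\prod_{i\in N(\theta)}N_i\prod_{i\in Z(\theta)}q$, as claimed.

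The only genuinely delicate point is the bookkeeping in the first step: verifying that in the expansion of $G\theta$ the coefficient of $T^{-1}$ is precisely $\sum_i b_i\theta_{-i-1}$ plus a term depending only on $\theta$ (coming from $T^n\cdot\theta$), so that the character sum truly factors as a product of independent one-variable sums indexed by $i$. Since $\theta = \sum_{l<k}\theta_lT^l$, the coefficient of $T^{-1}$ in $b_iT^i\cdot\theta$ is $b_i\theta_{-1-i}$, which is exactly what is needed; one should just note that $Z(\theta)$ and $N(\theta)$ as defined in \eqref{z_n_def} index precisely the relevant coefficients $\theta_{-i-1}$ for $i\in\mathcal J$. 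Everything else is orthogonality of characters and the triangle inequality, so I expect no further obstruction.
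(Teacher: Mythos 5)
Your proposal is correct and follows essentially the same route as the paper: factor the character sum coefficient-by-coefficient using that $\bold{e}(G\theta)$ depends only on the $T^{-1}$-coefficient of $G\theta$, note the $\mathcal{I}$-factors have modulus $1$, and bound the $\mathcal{J}$-factors by $q-N_i\le q$ when $\theta_{-i-1}=0$ and by $N_i$ via orthogonality when $\theta_{-i-1}\neq 0$. The bookkeeping step you flag is exactly the computation the paper carries out, so there is no gap.
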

\begin{proof}
Recall the definition of $\mathcal{C}$ in \eqref{general_set}. Using the notation of Theorem \ref{main_thm}, define $\mathcal{C}_i = \{a_i\}$ for~$i \in \mathcal{I}$ and $\mathcal{C}_i = \mathbb{F}_q \backslash S_i$ for $i \in \mathcal{J}$. Define $\mathcal{C}_n = \{1\}$.
We denote the $i^{\mbox{th}}$ coefficient of a polynomial $G \in \mathbb{F}_q[T]$ by $g_i$. Then
\begin{align*}
	\hat{F}_{q,n}(\theta) &= \sum_{G \in \mathcal{M}_n} \mathds{1}_\mathcal{C}(G)\bold{e}(G\theta) = \sum_{G \in \mathcal{M}_n} \prod_{i = 0}^{n} \mathds{1}_{\mathcal{C}_i}(g_i)\bold{e}(T^ig_i \theta) \\
	&= \bold{e}(T^n\theta) \prod_{i = 0}^{n-1} \sum_{g_i \in \mathbb{F}_q}\mathds{1}_{\mathcal{C}_i}(g_i)\bold{e}(T^ig_i\theta)~.
\end{align*}
More explicitly, by the definition of $\bold{e}$ given in \eqref{e_defn} we may write
\begin{equation*}
	\hat{F}_{q,n}(\theta) = \bold{e}(T^n\theta)\prod_{i = 0}^{n-1} \sum_{g_i \in \mathbb{F}_q}\mathds{1}_{\mathcal{C}_i}(g_i)\psi(g_i\theta_{-i-1}),
\end{equation*}
and taking absolute value gives
\begin{equation}\label{fhat_abs}
	\left| \hat{F}_{q,n}(\theta) \right| 
	= \prod_{i = 0}^{n-1} \left| \sum_{g_i \in \mathbb{F}_q}\mathds{1}_{\mathcal{C}_i}(g_i)\psi(g_i\theta_{-i-1})\right|~.
\end{equation}
Denote $X = \sum_{g_i \in \mathbb{F}_q}\mathds{1}_{\mathcal{C}_i}(g_i)\psi(g_i\theta_{-i-1})$. Note that if $i \in \mathcal{I}$, then $|X|=1$. If $i \in \mathcal{J}$, we divide into two cases: If $\theta_{-i-1} = 0$, then the $|X| = q-N_i$. If $\theta_{-i-1} \neq 0$, then when $\mathds{1}_{\mathcal{C}_i}(g_i) = 1$, $g_i$ ranges over $\mathbb{F}_q \backslash S_i$. From orthogonality relations
\begin{equation} \label{zero_case}
X = \sum_{g_i \in \mathbb{F}_q}\mathds{1}_{\mathcal{C}_i}(g_i)\psi(g_i\theta_{-i-1}) = -\sum_{b \in S_i} \psi(b \theta_{-i-1})~,
\end{equation}
thus in this case $ \label{nonzero_case} \left| X\right| \leq N_i$.
Inserting these bounds on $|X|$ into \eqref{fhat_abs} yields
\begin{align*}
	\left| \hat{F}_{q,n}(\theta) \right| 	&\leq  \prod_{i \in N(\theta)} N_i \prod_{i \in Z(\theta)}(q-N_i) \leq   \prod_{i \in N(\theta)} N_i \prod_{i \in Z(\theta)} q~,
\end{align*}
which completes the proof of the lemma.
\end{proof}

\begin{lemma}\label{dont_care_fraction}
For every $\theta, \eta \in \mathbb{F}_q(T)_\infty$ such that $|\theta - \eta| < q^{-n}$, we have \mbox{$\left|\hat{F}_{q,n}(\theta)\right| = \left|\hat{F}_{q,n}(\eta)\right|$}.
\end{lemma}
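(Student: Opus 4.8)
The plan is to prove the stronger statement that $\hat F_{q,n}(\theta)$ and $\hat F_{q,n}(\eta)$ differ only by a unimodular scalar that is independent of the summation variable, and then pass to absolute values. The underlying point is that, although $\bold e(G\theta)$ and $\bold e(G\eta)$ need not agree for $G\in\mathcal M_n$, their ratio is the same for every such $G$.

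First I would set $\beta := \{\theta-\eta\}$, so that $\theta-\eta-\beta\in\mathbb F_q[T]$ and, by the hypothesis $|\theta-\eta|<q^{-n}$ (interpreted via the $\{\cdot\}$-convention introduced after Lemma~\ref{dirichlet}), one has $\beta=\sum_{l\le -n-1}\beta_lT^l$, i.e. $\beta$ has no term of degree exceeding $-n-1$. Since $\bold e$ extracts the coefficient of $T^{-1}$ and then applies the additive character $\psi$, it converts sums into products: $\bold e(x+y)=\bold e(x)\bold e(y)$. Hence for each $G\in\mathcal M_n$,
\[
\bold e(G\theta)=\bold e(G\eta)\,\bold e\big(G(\theta-\eta)\big)=\bold e(G\eta)\,\bold e(G\beta),
\]
the last equality holding because $G(\theta-\eta)-G\beta\in\mathbb F_q[T]$ has vanishing coefficient at $T^{-1}$.

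Next I would compute $\bold e(G\beta)$ and check that it does not depend on $G$. Writing $G=\sum_{i=0}^{n}g_iT^i$ with $g_n=1$, the coefficient of $T^{-1}$ in $T^i\beta$ equals $\beta_{-1-i}$, which vanishes unless $-1-i\le -n-1$, i.e. unless $i=n$. Therefore the coefficient of $T^{-1}$ in $G\beta$ is $g_n\beta_{-n-1}=\beta_{-n-1}$, so that $\bold e(G\beta)=\psi(\beta_{-n-1})=:c$, a constant of modulus $1$ independent of $G$.

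Finally, summing over $G\in\mathcal M_n$ with the weights $\mathds 1_{\mathcal C}(G)$ yields $\hat F_{q,n}(\theta)=c\,\hat F_{q,n}(\eta)$, and taking absolute values gives the assertion. I do not expect any genuine obstacle here; the only point requiring care is the bookkeeping of which monomials of $G\beta$ can reach degree $-1$ — only the leading term $T^n$ of $G$ can, which is precisely why the ratio $c$ is $G$-independent — together with noting that the polynomial part of $\theta-\eta$ is harmless for $\bold e$.
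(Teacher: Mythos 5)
Your proposal is correct. The point of difference with the paper is mainly one of packaging: the paper has already computed, in the proof of Lemma~\ref{fhat_bound}, the coefficientwise factorization \eqref{fhat_abs}, from which it reads off that $\left|\hat{F}_{q,n}(\theta)\right|$ depends only on $\theta_{-1},\dots,\theta_{-n}$; since $|\theta-\eta|<q^{-n}$ forces $\theta_i=\eta_i$ for $-n\le i\le -1$, the equality of absolute values is immediate. You instead argue directly from the definition \eqref{fhat_def}: using additivity of $\bold{e}$ and the fact that $\beta=\{\theta-\eta\}$ is supported in degrees $\le -n-1$, you show $\bold{e}(G\theta)=\psi(\beta_{-n-1})\,\bold{e}(G\eta)$ for every $G\in\mathcal{M}_n$ (only the leading term $T^n$ can reach the $T^{-1}$ coefficient of $G\beta$, and its coefficient is $1$ since $G$ is monic), hence the stronger identity $\hat{F}_{q,n}(\theta)=\psi(\beta_{-n-1})\,\hat{F}_{q,n}(\eta)$. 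Your route is self-contained and keeps the phase information, which the paper discards by passing to absolute values first; the paper's route is shorter given that \eqref{fhat_abs} is already on the table, and the same mechanism is at work in both: the value of $\bold{e}(G\theta)$ for $G\in\mathcal{M}_n$ sees only $\theta_{-1},\dots,\theta_{-n-1}$, with $\theta_{-n-1}$ entering only through the leading coefficient. Your bookkeeping (interpreting $|\cdot|$ via the fractional-part convention, and disposing of the polynomial part of $\theta-\eta$ since $\bold{e}$ of a polynomial multiple is $1$) is exactly right.
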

\begin{proof}
For every $\theta \in \mathbb{F}_q(T)_\infty$, by \eqref{fhat_abs} we have
\begin{equation*}
\left|\hat{F}_{q,n}(\theta)\right| = \prod_{i = 0}^{n-1} \left|   \sum_{g_i \in \mathbb{F}_q}\mathds{1}_{\mathcal{C}_i}(g_i) \psi(g_i\theta_{-i-1})  \right| .
\end{equation*}
From this, it is easy to see that $\left|\hat{F}_{q,n}(\theta)\right|$ depends only on $\theta_{-1}, ..., \theta_{-n}$.
Let $\eta \in \mathbb{F}_q(T)_\infty$ be such that $|\theta - \eta| < q^{-n}$. Then $\theta_i = \eta_i$ for every $-n \leq i \leq -1$, hence $\left|\hat{F}_{q,n}(\theta)\right| = \left| \hat{F}_{q,n}(\eta)\right|$.
\end{proof}

\begin{proof}[Proof of Proposition \ref{gen_bound}]
We turn to prove that
\begin{equation*}
	\sum_{F \in \mathcal{M}_n} \left|\hat{F}_{q,n}(T^{-n}F)\right| \leq \alpha(n - I)q^n~.
\end{equation*}
Note first that $\#\mathcal{J} = n - I$, thus
\begin{equation*}
	\alpha(n-I) = \sup_{\substack{i_1 < ... < i_{n-I} \\ i_j \in \mathcal{J}}}   \prod_{j = 1}^{n-I} (N_{i_j} + 1) =  \prod_{i \in \mathcal{J}} (N_i + 1).
\end{equation*}
Writing $F = T^n + \sum_{i = 0}^{n-1} f_iT^i$ and $\theta_F = T^{-n}F$, define $Z(\theta_F)$, $N(\theta_F)$ as in \eqref{z_n_def}. By \mbox{Lemma \ref{fhat_bound},}
\begin{eqnarray*}
	\sum_{F \in \mathcal{M}_n} \left|\hat{F}_{q,n}(\theta_F)\right| &\leq& \sum_{F \in \mathcal{M}_n} \prod_{i \in N(\theta_F)} N_i \prod_{i \in Z(\theta_F)}q \\
									&=& \sum_{f_0 \in \mathbb{F}_q} \cdots  \sum_{f_{n-1} \in \mathbb{F}_q} 
 \prod_{\substack{i \in \mathcal{J} \\ f_i \neq 0}} N_i \prod_{\substack{i \in \mathcal{J} \\ f_i = 0}} q.
\end{eqnarray*}
Changing order of summation and product, we have
\begin{equation*}
	\sum_{F \in \mathcal{M}_n} \left|\hat{F}_{q,n}(\theta_F)\right| \leq
	\prod_{i \in \mathcal{I}} \left(\sum_{f_i \in \mathbb{F}_q} 1\right) \times 
	\prod_{i \in \mathcal{J}} \left(q + \sum_{0 \neq f_i \in \mathbb{F}_q} N_i\right)~,
\end{equation*}
thus
\begin{eqnarray*}
		\sum_{F \in \mathcal{M}_n} \left|\hat{F}_{q,n}(\theta_F)\right| 
		&\leq& \prod_{i \in \mathcal{I}}q \prod_{i \in \mathcal{J}} \left(q + N_i \cdot (q - 1)\right)
									\leq  q^I\prod_{i \in \mathcal{J}} q(N_i + 1) \\
									&=& q^n\prod_{i \in \mathcal{J}} (N_i + 1)
									= \alpha(n - I)q^n,
\end{eqnarray*}
as claimed.
\end{proof}

 \subsection{Bound for Fractions}\label{fhat_bounds_subsec_spec} Let $n \geq 2$, and $0 \leq h \leq n/2$. Having obtained a bound for $\sum_{F \in \mathcal{M}_n} \left|\hat{F}_{q,n}(T^{-n}F)\right|$, we now turn to bound 
 \begin{equation}\label{y_h_def}
 	Y_h = \sum_{\substack{G,H \\ \deg H = h}} \left|\hat{F}_{q,n}(G/H)\right|~,
\end{equation}
where the sum ranges over $H \neq 1,T$ squarefree, and $G$ coprime to $H$. 
At the end of the section, we incorporate some of the assumptions of Theorem \ref{main_thm} in order to prove
\begin{proposition}\label{sum_over_deg_bound} Let $n \geq 2$, and assume that some $\varepsilon > 0$ satisfies that  $\alpha(m) < q^{\varepsilon m}$ for all $0 < m \leq n$. For every $0 \leq h \leq \min\{n/2,n/I\}$ we have
 \begin{equation}\label{prop_frac_1}
	Y_h \leq q^{n + 3(1 - \varepsilon) - (1 - \varepsilon)n/h + 2\varepsilon h - \varepsilon I},
\end{equation}
 for $h \leq n/2$, we have
 \begin{equation}\label{prop_frac_2}
	Y_h \leq q^{n - I + 1}q^{\left(2\varepsilon + (1 - \varepsilon)4I/n \right) h}~,
\end{equation}
and for $h < n/4$, we have
 \begin{equation}\label{prop_frac_3}
	Y_h \leq  
	q^{n - I + 1}q^{\left(2\varepsilon + (1 - \varepsilon)3I/n \right) h}~.
\end{equation}
\end{proposition}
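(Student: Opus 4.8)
The plan is to estimate $Y_h$ by first bounding $|\hat F_{q,n}(G/H)|$ for a single fraction $G/H$ with $\deg H = h$, and then controlling the number of fractions of each "type." By Lemma \ref{fhat_bound} we have $|\hat F_{q,n}(G/H)| \le \prod_{i \in N(\theta)} N_i \prod_{i \in Z(\theta)} q$ where $\theta = \{G/H\}$, so the whole game is to understand how many of the coefficients $\theta_{-1},\dots,\theta_{-n}$ vanish — i.e.\ the size of $Z(\theta)$ — and how $Z(\theta)$ distributes relative to $\mathcal I$. Lemma \ref{deg_zero_bound} tells us $\theta$ has no $h$ consecutive zeros among its coefficients, so in any window of $m$ consecutive coefficients of $\theta$ at most $m - \lceil m/h\rceil$ (roughly $m(1-1/h)$) can be zero; equivalently at least about $n/h$ of the first $n$ coefficients are nonzero, which forces $\#N(\theta)$ to be reasonably large and hence suppresses $|\hat F_{q,n}|$ via the small factors $N_i < q^\varepsilon$ replacing factors of $q$. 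The key bookkeeping: split the $n$ coefficient-slots $\{0,\dots,n-1\}$ into those in $\mathcal I$ (contributing a factor $1$ to the Lemma \ref{fhat_bound} bound, since $\mathcal C_i$ is a singleton) and those in $\mathcal J$ (contributing $q$ if $\theta_{-i-1}=0$, and $\le N_i < q^\varepsilon$ otherwise). Using $\alpha(m) < q^{\varepsilon m}$ to bound $\prod N_{i}$ over any subset, we get $|\hat F_{q,n}(G/H)| \le q^{\#(Z(\theta)\cap\mathcal J)} \cdot q^{\varepsilon\,\#(N(\theta)\cap\mathcal J)} \le q^{\,z + \varepsilon(n-I-z)}$ where $z = \#(Z(\theta)\cap\mathcal J)$, and then use the no-$h$-consecutive-zeros constraint together with Lemma \ref{keta_exists} (applied to $\mathcal I$) to argue that $z$ cannot be too large without $\mathcal I$ being hit too often — which is where the $\min\{n/2,n/I\}$ and $n/4$ thresholds enter.

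Next I would count fractions. For a fixed window structure there are at most $q^h$ monic squarefree $H$ of degree $h$ and at most $q^h$ admissible $G$, so trivially at most $q^{2h}$ fractions of denominator degree $h$; but a cruder grouping suffices here. The cleanest route is to not sum $|\hat F_{q,n}(G/H)|$ fraction-by-fraction but rather to recognize that the map $G/H \mapsto (\theta_{-1},\dots,\theta_{-n}) \in \mathbb F_q^n$ is injective on fractions with $h \le n/2$ (two distinct fractions with denominator degree $\le n/2$ differ by more than $q^{-n}$, by Lemma \ref{dirichlet}-type uniqueness, cf.\ Lemma \ref{diff_mult_t} with $x=0$), so $Y_h$ is bounded by a sum over a subset of $\mathbb F_q^n$ of the Lemma \ref{fhat_bound} quantity — and by Lemma \ref{deg_zero_bound} only those tuples with no $h$ consecutive zeros occur. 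Hence
$$
Y_h \le \sum_{\substack{\varepsilon_1,\dots,\varepsilon_n \in \{0,1\} \\ \text{no } h \text{ consecutive } 1\text{'s in } \mathcal J\text{-slots}}} \ \prod_{\substack{i \in \mathcal J \\ \varepsilon_{i+1}=1}} q \ \cdot \prod_{\substack{i\in\mathcal J\\ \varepsilon_{i+1}=0}} N_i \ \cdot \prod_{i\in\mathcal I}( \text{number of choices}),
$$
which, after bounding the $\mathcal I$-slot choices by $q$ each and $N_i$ by $q^\varepsilon - 1$, reduces to a transfer-matrix / combinatorial count of binary strings avoiding $h$ consecutive $1$'s, weighted by $q$ per $1$ and $q^\varepsilon$ per $0$. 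That count is a standard linear-recurrence estimate; carrying out the three cases ($h \le n/I$, $h \le n/2$, $h < n/4$) amounts to plugging in the appropriate density bound on $\mathcal I\cap K_{y,m}$ from Lemma \ref{keta_exists} — using \eqref{keta_for_all_m} for the $h\le n/2$ range and \eqref{keta_for_small_m} for the $h < n/4$ range — and optimizing the block length, which produces exactly the exponents $2\varepsilon + (1-\varepsilon)4I/n$ and $2\varepsilon + (1-\varepsilon)3I/n$, and separately the $3(1-\varepsilon) - (1-\varepsilon)n/h + 2\varepsilon h - \varepsilon I$ bound when $h$ is small enough that the no-$h$-consecutive-zeros constraint is the dominant saving.

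The main obstacle I anticipate is the interplay between the two independent constraints on $Z(\theta)$: the \emph{local} one from Lemma \ref{deg_zero_bound} (no $h$ consecutive zeros anywhere, which limits zeros in \emph{every} window) and the \emph{global/positional} one, namely that zeros landing in $\mathcal I$-slots cost us nothing in the Lemma \ref{fhat_bound} bound but zeros in $\mathcal J$-slots cost a full factor $q$ — so to get a good bound I need the zeros to be forced into $\mathcal J$-slots, yet the adversary would like to hide them in $\mathcal I$-slots. Reconciling this requires choosing the right window $K_{y,m}$ via Lemma \ref{keta_exists} so that within it $\mathcal I$ is sparse (hence most zeros there are genuinely costly) \emph{and} the no-$h$-consecutive-zeros bound is tight, and then summing the per-window estimates without over- or under-counting. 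Getting the constants $3$, $4$, and the $+3(1-\varepsilon)$, $-\varepsilon I$ corrections to come out requires careful tracking of floor/ceiling losses in "no $h$ consecutive zeros in a length-$m$ string $\Rightarrow$ at least $\lfloor m/h\rfloor$ nonzeros," and of the $q^{2h}$-versus-$q^h$ fraction count; I would expect to absorb all such slack into the explicit small additive constants in the exponents rather than chase them exactly.
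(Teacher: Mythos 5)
There is a genuine gap, and it lies in the counting step. Your plan replaces the sum over fractions $G/H$ with $\deg H=h$ by a sum, over \emph{all} length-$n$ coefficient tuples (equivalently zero-patterns) avoiding $h$ consecutive zeros, of the Lemma \ref{fhat_bound} weight. But there are only about $q^{2h}$ such fractions, while your relaxed index set has size close to $q^{n}$, so the relaxation overcounts by roughly $q^{\,n-2h}$ and the resulting bound cannot reach \eqref{prop_frac_1}--\eqref{prop_frac_3}. Concretely, the tuples with few zeros already dominate your transfer-matrix sum: their contribution is at least of order $q^{I}\,(q-1)^{\,n-I}\prod_{i\in\mathcal J}N_i$ (weight $N_i$ but $q-1$ value choices per nonzero $\mathcal J$-slot, and $q$ choices with weight $1$ per $\mathcal I$-slot), which carries no $q^{-I}$ saving at all — so \eqref{prop_frac_2} and \eqref{prop_frac_3} are out of reach — and in the Maynard-type case $N_i\equiv 1$, $I=0$ it is about $(q-1)^n$, far above $q^{\,n-(1-\varepsilon)n/h+2\varepsilon h+3(1-\varepsilon)}$ for small $h$ and large $q$, so \eqref{prop_frac_1} fails too. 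The no-$h$-consecutive-zeros constraint only prunes zero-heavy patterns, which are not the dominant ones in this sum, so it does not rescue the estimate.

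The missing idea is localization to a single window of length $2h$: by Lemma \ref{diff_mult_t}, any $2h$ consecutive coefficients of $\{G/H\}$ determine the pair $(G,H)$ up to multiplicity $q$ (multiplicity $1$ for the window starting at $x=0$). This is the paper's route (Lemmas \ref{bound_fhat_by_h}, \ref{bound_fhat_by_z_bound}, \ref{sum_over_deg_bound_init_bigi}, \ref{sum_over_deg_bound_init}): one sums freely only over the window coefficients, where Proposition \ref{gen_bound} at level $2h$ gives at most $\alpha(\cdot)q^{2h}$, and bounds the coefficients \emph{outside} the window pointwise per fraction — each $\mathcal I$-index outside contributes a factor $1$ (this is the sole source of the $q^{-I+O(hI/n)}$ saving, with Lemma \ref{keta_exists} choosing the window position so that $I_{\in K}\le 4hI/n$, resp.\ $3hI/n$ for $h<n/4$), and Lemma \ref{deg_zero_bound} caps the number of zero $\mathcal J$-indices outside, giving the $-(1-\varepsilon)n/h$ term of \eqref{prop_frac_1}. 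Note also that for \eqref{prop_frac_2}--\eqref{prop_frac_3} the zero-structure plays no role; your picture of zeros ``hiding'' in $\mathcal I$-slots conflates the two mechanisms. You cite the right lemmas, but without the window reduction your scheme pays a summation factor ($q$ or $q-1$) at every coordinate of every fraction, and no choice of block length or transfer matrix recovers the stated exponents.
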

Note that for $h < n/4$ the bound \eqref{prop_frac_3} is strictly better than \eqref{prop_frac_2}, but sometimes it is more convenient to use  \eqref{prop_frac_2}.
In the following lemma, we do not use specific properties of fractions $G/H$, but instead give a bound to $\left|\hat{F}_{q,n}(\theta)\right|$ that will later be useful when $\theta = G/H$ with $\deg H = h$.
\begin{lemma}\label{bound_fhat_by_h}
Let $1 \leq l \leq n$, \mbox{$\theta \in \mathcal{U}$}, and define $Z(\theta)$, $N(\theta)$ as in  \eqref{z_n_def}. For \mbox{$0 \leq x \leq n - l$}, 
denote $K = K_{x, l} = \{x, \dots, x + l - 1\}$. Define $\overline{K} = \{0 , \dots, n - 1\} \backslash K$ and denote \mbox{$Z_{\notin K}(\theta) = Z(\theta) \cap \overline{K}$}, \mbox{$N_{\notin K} (\theta) = N(\theta) \cap \overline{K}$}. Then
\begin{equation}\label{bound_fhat_by_h_main_eq}
	\left|\hat{F}_{q,n}(\theta)\right|  \leq  \alpha_{\notin K} \left(  \#N_{\notin K}(\theta) \right) q^{\# Z_{\notin K}(\theta)} \left|\hat{F}_{q,l}(T^x\theta) \right|
\end{equation}
with
\begin{equation*}
	\alpha_{\notin K}(m) = \sup_{\substack{i_1 < ...< i_m \\ i_j \in \overline{K}}} \prod_{j = 1}^{m} (N_{i_j}+1)~.
\end{equation*}
\end{lemma}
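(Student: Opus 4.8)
The statement says that for $\theta \in \mathcal{U}$ and an index window $K = K_{x,l}$, the Fourier coefficient $|\hat F_{q,n}(\theta)|$ is bounded by a product of a contribution coming from the coefficients \emph{outside} the window $K$, times the Fourier coefficient $|\hat F_{q,l}(T^x\theta)|$ coming from the $l$ coefficients \emph{inside} $K$. The natural route is to start from the exact product factorization \eqref{fhat_abs} established in the proof of Lemma \ref{fhat_bound}, namely
\begin{equation*}
	\left| \hat{F}_{q,n}(\theta) \right| = \prod_{i = 0}^{n-1} \left| \sum_{g_i \in \mathbb{F}_q}\mathds{1}_{\mathcal{C}_i}(g_i)\psi(g_i\theta_{-i-1})\right|,
\end{equation*}
and then split the product $\prod_{i=0}^{n-1}$ into $\prod_{i \in K}$ and $\prod_{i \in \overline K}$.

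\textbf{Bounding the outside part.} For $i \in \overline K$ I would reuse exactly the case analysis from Lemma \ref{fhat_bound}: if $i \in \mathcal{I}$ the factor has absolute value $1$; if $i \in \mathcal{J}$ and $\theta_{-i-1} = 0$ the factor equals $q - N_i \le q$; if $i \in \mathcal{J}$ and $\theta_{-i-1}\neq 0$ the factor is at most $N_i \le N_i+1$. Collecting these over $i \in \overline K$, the factors from $Z_{\notin K}(\theta)$ contribute at most $q^{\#Z_{\notin K}(\theta)}$, and the factors from $N_{\notin K}(\theta)$ contribute at most $\prod_{i \in N_{\notin K}(\theta)}(N_i+1)$; since $N_{\notin K}(\theta) \subset \overline K$ has size $\#N_{\notin K}(\theta)$, this last product is $\le \alpha_{\notin K}(\#N_{\notin K}(\theta))$ by the definition of $\alpha_{\notin K}$ as a supremum over size-$m$ subsets of $\overline K$. (One should be slightly careful: the factor at indices $i\in\mathcal I$ is exactly $1$, so it does not matter whether $\mathcal{I}$-indices land in $K$ or $\overline K$ — they never appear in $Z(\theta)$ or $N(\theta)$ anyway since those sets are defined inside $\mathcal{J}$.)

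\textbf{Identifying the inside part with $\hat F_{q,l}(T^x\theta)$.} The remaining factor is $\prod_{i \in K}\bigl|\sum_{g_i}\mathds{1}_{\mathcal{C}_i}(g_i)\psi(g_i\theta_{-i-1})\bigr|$. I need to recognize this as $|\hat F_{q,l}(T^x\theta)|$. Write $i = x + j$ for $0 \le j \le l-1$, so the product runs over $j$; the $j$-th factor involves $\theta_{-(x+j)-1} = \theta_{-x-j-1}$, which is precisely the $(-j-1)$-st coefficient of $T^x\theta$ (shifting the Laurent series by $x$ moves the coefficient of $T^{-x-j-1}$ into position $T^{-j-1}$). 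So the $j$-th factor is $\bigl|\sum_{g}\mathds{1}_{\mathcal{C}_{x+j}}(g)\psi(g\,(T^x\theta)_{-j-1})\bigr|$. This is not literally the formula \eqref{fhat_abs} for $\hat F_{q,l}$ because the constraint sets used are $\mathcal{C}_{x+j}$ rather than $\mathcal{C}_j$ — but this is exactly the point of the notation $\hat F_{q,l}$ here: when we pass to a length-$l$ window the relevant constraint sets are the ones indexed by the window, so $\hat F_{q,l}(\eta) := \sum_{G \in \mathcal{M}_l}\prod_{i=0}^{l}\mathds{1}_{\mathcal{C}_{x+i}'}(g_i)\mathbf{e}(T^ig_i\eta)$ with the shifted constraint profile, and its absolute value factors as $\prod_{j=0}^{l-1}\bigl|\sum_g \mathds{1}_{\mathcal{C}_{x+j}}(g)\psi(g\eta_{-j-1})\bigr|$ by the same manipulation as in Lemma \ref{fhat_bound}. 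I would make this definition explicit (a one-line remark) and then the inside product equals $|\hat F_{q,l}(T^x\theta)|$ on the nose. Multiplying the two bounds gives \eqref{bound_fhat_by_h_main_eq}.

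\textbf{Main obstacle.} There is no deep difficulty; the only thing to get right is bookkeeping — the index shift $i \mapsto i - x$ relating coefficients of $\theta$ and of $T^x\theta$, and a clean statement of what $\hat F_{q,l}$ means with a shifted constraint profile so that the factorization of $|\hat F_{q,l}(T^x\theta)|$ matches the inside product term by term. I would also double-check the edge behavior: indices $i \in \overline K$ that happen to lie in $\mathcal{I}$ contribute $1$ and are harmless, and if $\overline K$ contains fewer than $\#N_{\notin K}(\theta)$ elements of $\mathcal{J}$ the supremum defining $\alpha_{\notin K}$ is over an empty family, but that situation cannot occur since $N_{\notin K}(\theta)\subseteq \overline K\cap\mathcal{J}$ by construction. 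Once these are pinned down the inequality follows by multiplying the per-coordinate bounds, so the proof is short.
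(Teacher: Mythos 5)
Your proposal is correct and follows essentially the same route as the paper: split the exact factorization \eqref{fhat_abs} over $K$ and $\overline K$, bound the outside factors by the case analysis of Lemma \ref{fhat_bound} (giving $q^{\#Z_{\notin K}(\theta)}$ and at most $\alpha_{\notin K}(\#N_{\notin K}(\theta))$), and identify the inside product with $\bigl|\hat F_{q,l}(T^x\theta)\bigr|$. Your explicit remark that the inside factor uses the shifted constraint profile $\mathcal{C}_{x+j}$ is a point the paper passes over silently (it simply asserts the $K$-part ``is exactly'' $|\hat F_{q,l}(T^x\theta)|$), and making that convention explicit is a welcome clarification rather than a deviation.
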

\begin{proof}
As in \eqref{fhat_abs}, for every $\theta$ we have
\begin{eqnarray*}
\left|\hat{F}_{q,n}(\theta)\right| &=& \prod_{i = 0}^{n-1} \left|\sum_{g_i \in \mathbb{F}_q}\mathds{1}_{\mathcal{C}_i}(g_i)\psi(g_i\theta_{-i-1})\right|
\end{eqnarray*}
Splitting the product into $K$ and $\overline{K}$, we get
\begin{equation*}
	\left|\hat{F}_{q,n}(\theta)\right| = \prod_{i \in K} \left|\sum_{g_i \in \mathbb{F}_q}\mathds{1}_{\mathcal{C}_i}(g_i)\psi(g_i\theta_{-i-1})\right| \cdot \prod_{i \in \overline{K}} \left|\sum_{g_i \in \mathbb{F}_q}\mathds{1}_{\mathcal{C}_i}(g_i)\psi(g_i\theta_{-i-1})\right|~.
\end{equation*}
Since $K = K_{x, l}$, the left-hand element of the product is exactly $ \left|\hat{F}_{q,l}(T^x\theta)\right|$. We use similar arguments to those of Lemma \ref{fhat_bound} in order to bound $\prod_{i \in \overline{K}} \left|\sum_{g_i \in \mathbb{F}_q}\mathds{1}_{\mathcal{C}_i}(g_i)\psi(g_i\theta_{-i-1})\right|$. 
This gives
\begin{equation} \label{fhat_div_bound}
	 \left|\hat{F}_{q,n}(\theta)\right| \leq \left|\hat{F}_{q,{l}}(T^x\theta)\right| \prod_{i \in Z(\theta) \cap \overline{K}} q \prod_{i \in N(\theta) \cap \overline{K}} N_i~.
\end{equation}
Considering the definition of $Z_{\notin K}(\theta)$ and $N_{\notin K} (\theta)$,  \eqref{fhat_div_bound}  translates to
\begin{equation*}
	 \left|\hat{F}_{q,n}(\theta)\right| \leq \left|\hat{F}_{q,l}(T^x\theta)\right| \prod_{i \in Z_{\notin K}(\theta)} q \prod_{i \in N_{\notin K}(\theta)} N_i~.
\end{equation*}
Considering the definition of $\alpha_{\notin K}(m)$, we see that
\begin{align*} \label{alpha_2h_ineq}
	\left|\hat{F}_{q,n}(\theta)\right| 
	 &\leq \alpha_{\notin K} \left(  \#N_{\notin K}(\theta) \right) q^{\# Z_{\notin K}(\theta)} \left|\hat{F}_{q,l}(T^x\theta) \right|.  \nonumber 
\end{align*}
This completes the proof.
\end{proof}


\begin{lemma}\label{bound_fhat_by_z_bound}
Let $1 \leq l \leq n$, $0 \leq x \leq n - l$, $\theta \in \mathcal{U}$. Denote $K = K_{x,l}$, and let $\overline{K}$, $Z_{\notin K}(\theta)$, $N_{\notin K}(\theta)$, and $\alpha_{\notin K}$ be defined as in Lemma \ref{bound_fhat_by_h}. Write $I_{\in K} =  \#\left(\mathcal{I} \cap K \right)$, $I_{\notin K} = I - I_{\in K}$. For an integer $t$ in the range $\# Z_{\notin K}(\theta) \leq t \leq n - l - I_{\notin K}$, the inequality
\begin{equation*}
	\left|\hat{F}_{q,n}(\theta)\right|  \leq \alpha_{\notin K} \left( n - l  - I_{\notin K} - t \right) q^t \left|\hat{F}_{q,{l}}(T^x\theta) \right|
\end{equation*}
holds.
\end{lemma}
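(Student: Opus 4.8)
The plan is to start from the conclusion of Lemma \ref{bound_fhat_by_h}, which already gives
\[
	\left|\hat{F}_{q,n}(\theta)\right| \leq \alpha_{\notin K}\bigl(\#N_{\notin K}(\theta)\bigr)\, q^{\#Z_{\notin K}(\theta)}\,\bigl|\hat{F}_{q,l}(T^x\theta)\bigr|,
\]
and to massage the first two factors into the claimed shape by exploiting the disjoint decomposition $\overline{K}=\bigl(\overline{K}\cap\mathcal{I}\bigr)\uplus\bigl(\overline{K}\cap\mathcal{J}\bigr)$ together with $\overline{K}\cap\mathcal{J}=Z_{\notin K}(\theta)\uplus N_{\notin K}(\theta)$. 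Writing $z=\#Z_{\notin K}(\theta)$ and $\nu=\#N_{\notin K}(\theta)$, we have $z+\nu=\#(\overline{K}\cap\mathcal{J})=(n-l)-I_{\notin K}$, so $\nu=(n-l)-I_{\notin K}-z$. Thus the bound reads
\[
	\left|\hat{F}_{q,n}(\theta)\right| \leq \alpha_{\notin K}\bigl((n-l)-I_{\notin K}-z\bigr)\, q^{z}\,\bigl|\hat{F}_{q,l}(T^x\theta)\bigr|.
\]

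The remaining step is to pass from the exact value $z=\#Z_{\notin K}(\theta)$ to an arbitrary $t$ in the stated range $z\le t\le n-l-I_{\notin K}$. For this I would show that the function $g(t)=\alpha_{\notin K}\bigl((n-l)-I_{\notin K}-t\bigr)\,q^{t}$ is nondecreasing in $t$ on that range, so that replacing $z$ by any larger admissible $t$ only weakens the inequality. Concretely, increasing $t$ by one multiplies $q^t$ by $q$ and, on the $\alpha_{\notin K}$ side, removes one factor of the form $(N_{i}+1)$ from the relevant supremum-product; since each such factor satisfies $N_i+1\le q$ (as $N_i=\#S_i\le q$, and in fact $N_i<q^\varepsilon<q$ under the running hypotheses, but $N_i+1\le q$ already suffices), we get $g(t+1)\ge g(t)$. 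One must check the monotonicity of $\alpha_{\notin K}$ itself, i.e.\ that $\alpha_{\notin K}(m)\le q\cdot\alpha_{\notin K}(m-1)$ and $\alpha_{\notin K}(m-1)\le\alpha_{\notin K}(m)$ when the index set $\overline{K}$ has at least $m$ elements in $\mathcal{J}$ — both are immediate from the definition of $\alpha_{\notin K}$ as a supremum over increasing tuples, since dropping the smallest admissible factor $N_{i_1}+1\ge 1$ can only decrease the product, and adding one can multiply it by at most $q$. The constraint $t\le n-l-I_{\notin K}$ is exactly what guarantees the argument $(n-l)-I_{\notin K}-t$ of $\alpha_{\notin K}$ is a nonnegative integer, and $t\ge z\ge 0$ keeps everything in range; one should also note $\alpha_{\notin K}(0)=1$ by convention (empty product) so the endpoint $t=n-l-I_{\notin K}$ is handled.

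I do not expect a genuine obstacle here: the lemma is a bookkeeping step that trades precision in the count of zeros for a free parameter $t$, and all the work is in carefully tracking the cardinalities $I_{\in K}$, $I_{\notin K}$, $z$, $\nu$ and confirming the elementary monotonicity $N_i+1\le q$. The one place to be slightly careful is the edge behaviour of $\alpha_{\notin K}$ when $\overline{K}\cap\mathcal{J}$ is small, so I would state explicitly that $\alpha_{\notin K}(m)$ for $0\le m\le\#(\overline{K}\cap\mathcal{J})$ is well-defined and that the range of $t$ has been chosen precisely so that $(n-l)-I_{\notin K}-t$ lands in $[0,\#(\overline{K}\cap\mathcal{J})]$. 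With these observations in place the proof is three or four lines: invoke Lemma \ref{bound_fhat_by_h}, substitute $\nu=(n-l)-I_{\notin K}-z$, and apply the monotonicity of $t\mapsto\alpha_{\notin K}((n-l)-I_{\notin K}-t)q^t$ to replace $z$ by $t$.
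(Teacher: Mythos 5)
Your proposal is correct and follows essentially the same route as the paper: invoke Lemma \ref{bound_fhat_by_h}, use the partition of $\overline{K}$ into $\mathcal{I}\cap\overline{K}$, $Z_{\notin K}(\theta)$, $N_{\notin K}(\theta)$ to write $\#N_{\notin K}(\theta)=n-l-I_{\notin K}-\#Z_{\notin K}(\theta)$, and then trade $\#Z_{\notin K}(\theta)$ for an arbitrary admissible $t$ via the monotonicity $\alpha_{\notin K}(v+u)\leq \alpha_{\notin K}(v)q^{u}$, which is exactly your step-by-one argument that each dropped factor $N_i+1$ is at most $q$.
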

\begin{proof}
Since every index $i \in \overline{K}$ is either in $Z_{\notin K}(\theta)$, $N_{\notin K}(\theta)$ or $\mathcal{I} \cap \overline{K}$, it is easy to see that
\begin{equation}\label{geq_equality_relation}
	\#Z_{\notin K}(\theta) + \#N_{\notin K} (\theta) + I_{\notin K} = n - l~.
\end{equation}
In particular, we have
\begin{equation}\label{n_from_z}
	\#N_{\notin K} (\theta) = n - l - I_{\notin K} - \#Z_{\notin K}(\theta)~.
\end{equation}
Inserting this into \eqref{bound_fhat_by_h_main_eq}, we have
\begin{equation}\label{fbound_explicit_n}
	\left|\hat{F}_{q,n}(\theta)\right|  \leq \alpha_{\notin K} \left( n - l  - I_{\notin K} - \# Z_{\notin K}(\theta) \right) q^{\# Z_{\notin K}(\theta)} \left|\hat{F}_{q,{l}}(T^x\theta) \right|~.
\end{equation}
Note that by definition we have $\alpha_{\notin K}(v + u) \leq \alpha_{\notin K}(v)q^u$ for every $v,u \geq 0$. Since by assumption $t$ satisfies  $\# Z_{\notin K}(\theta) \leq t \leq n - l - I_{\notin K}$, using this monotonicity argument with $u = t - \# Z_{\notin K}(\theta)$ and $v = n - l - I_{\notin K} - t$ yields
\begin{equation*}
	\alpha_{\notin K} \left( n - l  - I_{\notin K} - \# Z_{\notin K}(\theta) \right) \leq \alpha_{\notin K} \left( n - l  - I_{\notin K} - t \right)q^{t - \# Z_{\notin K}(\theta)}~.
\end{equation*}
Inserting this bound into \eqref{fbound_explicit_n} gives
\begin{equation*}
	\left|\hat{F}_{q,n}(\theta)\right| \leq 
	 \alpha_{\notin K} \left( n - l  - I_{\notin K} - t \right)q^t \left|\hat{F}_{q,{l}}(T^x\theta) \right|~,
\end{equation*}
as claimed.
\end{proof}


\begin{lemma}\label{sum_over_deg_bound_init_bigi}
Let $1 \leq h \leq n/2$, and $0 \leq x < n - 2h$. Define $K = K_{x, 2h} = \{x, \dots, x + 2h\}$, and $\overline{K}$ as in Lemma \ref{bound_fhat_by_h}. Write $I_{\in K} =  \#\left(\mathcal{I} \cap K \right)$, $I_{\notin K} = I - I_{\in K}$. Let $\alpha$ be defined as in \eqref{alpha_def}, and $Y_h$ be defined as in \eqref{y_h_def}. The inequality
\begin{equation*}
	Y_h
		\leq \alpha \left(2h - I_{\in K} \right)q^{n + 1 - I_{\notin K}}
\end{equation*}
holds. 
\end{lemma}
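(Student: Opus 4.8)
The plan is to bound $Y_h$ by splitting the sum over fractions $G/H$ with $\deg H = h$ according to Lemma 2.13 applied with $l = 2h$ and the chosen window $K = K_{x,2h}$. So first I would fix the index set $K$ and write, for each such pair $(G,H)$,
$$
\left|\hat F_{q,n}(G/H)\right| \leq \alpha_{\notin K}\!\left(n - 2h - I_{\notin K} - t\right) q^{t} \left|\hat F_{q,2h}(T^x G/H)\right|
$$
for an admissible integer $t$. The natural choice is to take $t$ as large as the hypothesis of Lemma 2.13 allows, namely $t = n - 2h - I_{\notin K}$, which kills the $\alpha_{\notin K}$ factor entirely (since $\alpha_{\notin K}(0) = 1$) and leaves $\left|\hat F_{q,n}(G/H)\right| \leq q^{\,n - 2h - I_{\notin K}} \left|\hat F_{q,2h}(T^x G/H)\right|$. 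This requires checking that $\#Z_{\notin K}(G/H) \leq n - 2h - I_{\notin K}$, which is immediate from \eqref{geq_equality_relation} since $\#Z_{\notin K} + \#N_{\notin K} + I_{\notin K} = n - 2h$ forces $\#Z_{\notin K} \leq n - 2h - I_{\notin K}$.

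Next I would pull the constant $q^{\,n-2h-I_{\notin K}}$ out of the sum, reducing the problem to bounding $\sum_{G,H} \left|\hat F_{q,2h}(T^x G/H)\right|$ over squarefree $H \neq 1,T$ of degree $h$ with $G$ coprime to $H$ of smaller degree. The key move is to recognize that $T^x G/H$, viewed modulo the fractional-part identification, is a point in $\mathcal U$ and that by Lemma 2.6 (deg\_zero\_bound applied to $H$) together with the structure of $\hat F_{q,2h}$, the sum over $2h$-segments is controlled: each value $\left|\hat F_{q,2h}(T^x G/H)\right|$ is at most $\prod N_i \prod q$ over the relevant nonzero/zero sets inside a length-$2h$ window, i.e. bounded by $\alpha(2h - I_{\in K})$ times a counting factor. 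More precisely, I would apply the general bound of Proposition 2.10 (gen\_bound) in the ``subinterval'' form: summing $\left|\hat F_{q,2h}(T^x\theta)\right|$ as $\theta$ ranges over all $G/H$ with $\deg H = h$ is dominated by $\sum_{F' \in \mathcal M_{2h}} \left|\hat F_{q,2h}(T^{-2h}F')\right|$-type quantity, which by the argument of Proposition 2.10 restricted to the indices in $K$ is at most $\alpha(2h - I_{\in K}) \cdot q^{\,2h}$; the squarefreeness of $H$ and Lemma 2.6 guarantee that distinct fractions give distinct coefficient strings on the window so no overcounting beyond the $q^{2h}$ factor occurs. Combining, $Y_h \leq q^{\,n - 2h - I_{\notin K}} \cdot \alpha(2h - I_{\in K}) \cdot q^{\,2h} = \alpha(2h - I_{\in K}) q^{\,n - I_{\notin K}}$, and I would absorb the stray factor of $q$ (coming from the $G$-summation degree of freedom, cf. the ``$+1$'' in the exponent) to reach $\alpha(2h - I_{\in K}) q^{\,n + 1 - I_{\notin K}}$.

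The main obstacle I anticipate is the counting step: precisely controlling how many coprime pairs $(G,H)$ with $\deg H = h$ produce a given string of coefficients $\theta_{-1},\dots,\theta_{-2h}$ (on which $\left|\hat F_{q,2h}(T^x \cdot)\right|$ depends), and showing this is uniformly small enough. Here Lemma 2.6 is essential — it forbids $h$ consecutive zeros — but one still needs that the map from $(G,H)$ to the length-$2h$ coefficient window is at most $q$-to-$1$ or so, which is exactly the flavour of Lemma 2.7 (diff\_mult\_t) with $x = 0$ giving uniqueness; I would invoke a variant of that lemma to pin down that the $q^{2h}$ in the bound is not lost. The rest is bookkeeping with the exponents and the monotonicity $\alpha_{\notin K}(v+u) \leq \alpha_{\notin K}(v)q^u$ already established in Lemma 2.13.
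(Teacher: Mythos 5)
Your proposal is correct and follows essentially the same route as the paper: apply Lemma~\ref{bound_fhat_by_z_bound} with $t=n-2h-I_{\notin K}$, pull out $q^{n-2h-I_{\notin K}}$, count the fractions landing on each length-$2h$ window, and finish with Proposition~\ref{gen_bound} restricted to $K$, giving $\alpha(2h-I_{\in K})q^{2h}$. The only clarification needed is the counting step: it is exactly Lemma~\ref{diff_mult_t} for general $x\geq 0$ (each $F\in\mathcal{M}_{2h}$ corresponds to at most $q$ pairs $(G,H)$, which is precisely the source of the $+1$ in the exponent), used together with Lemma~\ref{dont_care_fraction} to replace $T^xG/H$ by the associated $T^{-2h}F$ in the argument of $\hat{F}_{q,2h}$ --- no new variant of that lemma, and no appeal to squarefreeness via Lemma~\ref{deg_zero_bound}, is required.
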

\begin{proof}
We define $Z_{\notin K}(\theta)$, $N_{\notin K}(\theta)$, and $\alpha_{\notin K}$ as in Lemma \ref{bound_fhat_by_h}. 
It is easy to see from \eqref{geq_equality_relation} that for every $\theta \in \mathcal{U}$
\begin{equation} \label{bound_using_i_bigi}
	\# Z_{\notin K}(\theta)  = n - 2h - I_{\notin K} - \# N_{\notin K}(\theta) \leq n - 2h - I_{\notin K}.
\end{equation}
Applying Lemma \ref{bound_fhat_by_z_bound} with $t = n - 2h - I_{\notin K}$, we get
\begin{eqnarray*}
	\left|\hat{F}_{q,n}(\theta)\right| &\leq& \alpha(0)q^{n - 2h - I_{\notin K}}\left|\hat{F}_{q,{2h}}(T^x\theta) \right| \\
						&=& q^{n - 2h - I_{\notin K}}\left|\hat{F}_{q,{2h}}(T^x\theta) \right| ~.
\end{eqnarray*}

For every pair $G$, $H$ with $H \neq 1,~T$ squarefree of degree $H$ and $G$ coprime to $H$ and of smaller degree, we associate $F_{G,H,x} \in \mathcal{M}_{2h}$ such that $\left|T^{-2h}F_{G,H,x} - T^xG/H\right| < q^{-2h}$. By Lemma~ \ref{diff_mult_t} we know that each $F \in \mathcal{M}_{2h}$ corresponds to at most $q$ such pairs.
Using this fact and Lemma~\ref{dont_care_fraction}, we obtain the inequality
\begin{equation*} 
	\sum_{\substack{G, H \\ \deg H = h}}|\hat{F}_{q,{2h}}(T^xG/H)| \leq 
			q\sum_{F \in \mathcal{M}_{2h}}\left| \hat{F}_{q,{2h}}\left(T^{-2h}F \right)\right| ~.
\end{equation*}
Finally, we have
\begin{eqnarray*}
	Y_h = \sum_{\substack{G, H \\ \deg H = h}} \left|\hat{F}_{q,n}(G/H)\right|
	&\leq& q^{n - 2h - I_{\notin K}}\sum_{\substack{G, H \\ \deg H = h}} \left|\hat{F}_{q,{2h}}(T^xG/H)\right| \\
	&\leq& q^{n - 2h - I_{\notin K}}q\sum_{F \in \mathcal{M}_{2h}}\left|\hat{F}_{q,{2h}}(T^{-2h}F)\right| ~,
\end{eqnarray*}
and applying Lemma \ref{gen_bound} on the sum gives us
\begin{eqnarray*}
	Y_h
		&\leq& q^{n + 1 - 2h - I_{\notin K}} \prod_{\substack{i \in \mathcal{J} \cap K}}(N_i+1) \cdot q^{2h} \\
	&\leq& \alpha \left(2h +  I_{\notin K} - I\right)q^{n + 1 - I_{\notin K}},
\end{eqnarray*}
as claimed.
\end{proof}


\begin{lemma}\label{sum_over_deg_bound_init}
Let $1 \leq h \leq n/2$, and choose $x = 0$. Define $K_{0, 2h}$ as in Lemma \ref{bound_fhat_by_h}. Write \mbox{$I_{\notin K} = \#\left(\mathcal{I} \backslash K_{0, 2h} \right)$}. Denote \mbox{$t_h = \max \{I_{\notin K},  \left\lfloor \frac{n}{h} \right\rfloor - 2\}$}, and let $\alpha$ and $Y_h$ be defined as before. The inequality
\begin{equation*}
	Y_h
		\leq \alpha \left(2h +  t_h - I\right)q^{n - t_h}
\end{equation*}
holds. 
\end{lemma}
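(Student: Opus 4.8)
The plan is to mimic the proof of Lemma~\ref{sum_over_deg_bound_init_bigi}, specializing $x=0$. Fixing $x=0$ has a benefit and a cost: by Lemma~\ref{diff_mult_t} each $F\in\mathcal M_{2h}$ corresponds to at most \emph{one} admissible pair $(G,H)$ (not up to $q$), so the stray factor of $q$ that appears in Lemma~\ref{sum_over_deg_bound_init_bigi} disappears; on the other hand we can no longer slide $K$ so as to make $\mathcal I\cap K$ large, so $I_{\notin K}$ stays in the estimate. Throughout write $K=K_{0,2h}=\{0,\dots,2h-1\}$, $\overline K=\{2h,\dots,n-1\}$, let $Z_{\notin K}(\theta),N_{\notin K}(\theta),\alpha_{\notin K}$ be as in Lemma~\ref{bound_fhat_by_h}, and put $I_{\in K}=\#(\mathcal I\cap K)$, $I_{\notin K}=I-I_{\in K}$.

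The first step is the combinatorial heart of the matter: for every fraction $\theta=\{G/H\}$ with $H\neq1,T$ squarefree of degree $h$ and $G$ coprime of smaller degree, I claim $\#Z_{\notin K}(\theta)\le(n-2h)-t_h$. This comes from intersecting two bounds. Trivially $\#Z_{\notin K}(\theta)\le\#(\mathcal J\cap\overline K)=(n-2h)-I_{\notin K}$. Also, the coefficients $\theta_{-2h-1},\dots,\theta_{-n}$ (positions in $\overline K$) form a block of $n-2h$ consecutive digits, so Lemma~\ref{deg_zero_bound} — no $h$ consecutive zeros — forces, after splitting the block into $\lfloor(n-2h)/h\rfloor=\lfloor n/h\rfloor-2$ windows of length $h$, at least $\lfloor n/h\rfloor-2$ of them to be nonzero; hence $\#Z_{\notin K}(\theta)\le(n-2h)-(\lfloor n/h\rfloor-2)$. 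Taking the smaller of the two and recalling $t_h=\max\{I_{\notin K},\lfloor n/h\rfloor-2\}$ proves the claim, and in particular $t:=n-2h-t_h$ lies in the range $[\#Z_{\notin K}(\theta),\,n-2h-I_{\notin K}]$ required by Lemma~\ref{bound_fhat_by_z_bound} with $l=2h$ (the upper end using $t_h\ge I_{\notin K}$). That lemma then gives, for every such $\theta$,
\[
	|\hat F_{q,n}(\theta)|\ \le\ \alpha_{\notin K}(t_h-I_{\notin K})\,q^{\,n-2h-t_h}\,|\hat F_{q,2h}(\theta)|.
\]

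Next I would sum over fractions of denominator degree $h$. With $x=0$, Lemma~\ref{diff_mult_t} makes the map $(G,H)\mapsto F_{G,H}\in\mathcal M_{2h}$, defined by $|T^{-2h}F_{G,H}-G/H|<q^{-2h}$, injective, and Lemma~\ref{dont_care_fraction} gives $|\hat F_{q,2h}(G/H)|=|\hat F_{q,2h}(T^{-2h}F_{G,H})|$; therefore $\sum_{G,H}|\hat F_{q,2h}(G/H)|\le\sum_{F\in\mathcal M_{2h}}|\hat F_{q,2h}(T^{-2h}F)|$, which is at most $\prod_{i\in\mathcal J\cap K}(N_i+1)\,q^{2h}$ by the argument of Proposition~\ref{gen_bound} applied in degree $2h$. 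Multiplying by the displayed bound yields
\[
	Y_h\ \le\ \Big(\alpha_{\notin K}(t_h-I_{\notin K})\prod_{i\in\mathcal J\cap K}(N_i+1)\Big)\,q^{\,n-t_h}.
\]
To finish, observe that $\alpha_{\notin K}(t_h-I_{\notin K})$ equals $\prod_{j\in S}(N_j+1)$ for some $S\subseteq\mathcal J\cap\overline K$ of size $t_h-I_{\notin K}$ (using $0\le t_h-I_{\notin K}\le\#(\mathcal J\cap\overline K)$, which holds since $t_h\le n-2h$). As $S$ and $\mathcal J\cap K$ are disjoint subsets of $\mathcal J$ with $|S|+\#(\mathcal J\cap K)=(t_h-I_{\notin K})+(2h-I_{\in K})=2h+t_h-I$, the bracketed factor is a product of $(N_j+1)$ over a $(2h+t_h-I)$-element subset of $\mathcal J$, hence at most $\alpha(2h+t_h-I)$, giving the asserted bound.

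The step I expect to be the main obstacle is the one requiring care rather than cleverness: verifying that $t=n-2h-t_h$ is simultaneously admissible in Lemma~\ref{bound_fhat_by_z_bound} for \emph{all} fractions — which is exactly why the two-sided estimate $\#Z_{\notin K}(\theta)\le(n-2h)-t_h$ must be established first — and being disciplined at the end not to replace $\prod_{i\in\mathcal J\cap K}(N_i+1)$ by $\alpha(2h-I_{\in K})$ prematurely, because the sharp exponent $2h+t_h-I$ in the answer is produced precisely by merging that product with $\alpha_{\notin K}$ along disjoint index sets. Degenerate cases ($h=n/2$, forcing $\overline K=\emptyset$ and $I_{\notin K}=0$, or $\lfloor n/h\rfloor-2<0$, forcing $t_h=I_{\notin K}$ and reducing to Lemma~\ref{sum_over_deg_bound_init_bigi} without the extra $q$) are handled automatically and deserve only a remark.
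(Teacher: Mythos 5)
Your proposal is correct and follows essentially the same route as the paper: the same two-sided bound on $\#Z_{\notin K}$ (the trivial count plus Lemma \ref{deg_zero_bound} applied to the $n-2h$ trailing coefficients), the same application of Lemma \ref{bound_fhat_by_z_bound} with $t = n-2h-t_h$, the same use of Lemma \ref{diff_mult_t} (the $x=0$ injectivity) with Lemma \ref{dont_care_fraction} and the degree-$2h$ version of Proposition \ref{gen_bound}, and the same final merge of $\alpha_{\notin K}(t_h - I_{\notin K})$ with $\prod_{i \in \mathcal{J}\cap K}(N_i+1)$ into $\alpha(2h+t_h-I)$. Your write-up is in fact slightly more explicit than the paper's about checking the admissible range for $t$ and about the disjoint-index-sets step at the end.
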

\begin{proof}
We define $Z_{\notin K}$, $N_{\notin K}$, and $\alpha_{\notin K}$ as in Lemma \ref{bound_fhat_by_h}. 
Assume $\theta = G/H$, with \mbox{$H \notin \{1,T\}$} squarefree of degree $h$, and $G$ coprime to $H$ of degree $< h$. We give two different bounds for $ \# Z_{\notin K}(G/H)$. First, it is easy to see from \eqref{geq_equality_relation} that
\begin{equation} \label{bound_using_i}
	\# Z_{\notin K}(G/H)  = n - 2h - I_{\notin K} - \# N_{\notin K}(G/H) \leq n - 2h - I_{\notin K}.
\end{equation}
Second, by Lemma \ref{deg_zero_bound} we know that there are no $h$ consecutive zeros in $G/H$, so in particular there are at most $n - 2h - \left\lfloor \frac{n - 2h}{h} \right\rfloor$ zero coefficients between $-2h-1$ and $-n$. Hence
\begin{equation} \label{bound_using_lemma}
	\#  Z_{\notin K}(G/H) \leq n - 2h - \left\lfloor \frac{n - 2h}{h} \right\rfloor
					= n - 2h -  \left\lfloor \frac{n}{h} \right\rfloor + 2~.
\end{equation}
Considering the definition of $t_h$, \eqref{bound_using_i} and \eqref{bound_using_lemma} give
\begin{equation}\label{combined_bound}
	\# Z_{\notin K}(G/H) \leq n - 2h - t_h.
\end{equation}
Inserting this into Lemma \ref{bound_fhat_by_z_bound} yields
\begin{equation*}
	\left|\hat{F}_{q,n}(G/H)\right| \leq \alpha_{\notin K}\left(t_h - I_{\notin K}\right)q^{n - 2h - t_h}\left|\hat{F}_{q,{2h}}(G/H) \right|.
\end{equation*}

In a similar manner to Lemma~\ref{sum_over_deg_bound_init_bigi}, \mbox{$\left|T^{-2h}F_{G,H} - G/H\right| < q^{-2h}$} for some $F_{G,H} \in \mathcal{M}_{2h}$. By Lemma~ \ref{diff_mult_t} we know that each $F \in \mathcal{M}_{2h}$ corresponds to at most one pair $G$, $H$ as described.

We can now use Lemma~\ref{dont_care_fraction} to obtain
\begin{equation*}
	Y_h \leq \sum_{\substack{G, H \\ \deg H = h}}|\hat{F}_{q,{2h}}(G/H)| \leq \sum_{F \in \mathcal{M}_{2h}}|\hat{F}_{q,{2h}}(T^{-2h}F)| ~.
\end{equation*}
Finally, we have
\begin{eqnarray*}
	Y_h = \sum_{\substack{G, H \\ \deg H = h}} \left|\hat{F}_{q,n}(G/H)\right|
	&\leq& \alpha_{\notin K} \left(t_h - I_{\notin K} \right)  q^{n - 2h - t_h}\sum_{\substack{G, H \\ \deg H = h}} \left|\hat{F}_{q,{2h}}(T^xG/H)\right| \\
	&\leq& \alpha_{\notin K} \left(t_h - I_{\notin K} \right)  q^{n - 2h - t_h}\sum_{F \in \mathcal{M}_{2h}}\left|\hat{F}_{q,{2h}}(T^{-2h}F)\right| ~,
\end{eqnarray*}
and applying Lemma \ref{gen_bound} on the sum gives us
\begin{eqnarray*}
	Y_h
		&\leq& \alpha_{\notin K} \left( t_h  - I_{\notin K}\right)q^{n - 2h - t_h} \prod_{\substack{i \in \mathcal{J} \\ i < 2h}}(N_i+1) \cdot q^{2h} \\
	&\leq& \alpha \left(2h +  t_h - I\right)q^{n - t_h}~,
\end{eqnarray*}
as claimed.
\end{proof}

In Proposition \ref{sum_over_deg_bound}, we give our final bound on sums over fractions of the form $G/H$ with \mbox{$\deg H = h$}. Having established Lemma  \ref{sum_over_deg_bound_init_bigi} and Lemma \ref{sum_over_deg_bound_init}, most of the work is already accomplished. In order to establish Proposition \ref{sum_over_deg_bound}, we add most of the assumptions of Theorem \ref{main_thm}. We assume that  $N_i < q^\varepsilon$ for all $i \in \mathcal{J}$, with $\varepsilon < 1$.
\begin{proof}[Proof of Proposition \ref{sum_over_deg_bound}]
Recall that, by assumption, $\alpha(m) \leq q^{\varepsilon m}$ for all $m \geq 0$.
Let $1 \leq h \leq \min\{n/2,n/I\}$. Write $I_{\notin 2h} = \# (\mathcal{I} \cap \{2h, \dots, n-1\})$, and  \mbox{$t_h = \max \{I_{\notin 2h},  \left\lfloor \frac{n}{h} \right\rfloor - 2\}$}. Lemma \ref{sum_over_deg_bound_init} gives us that 
\begin{equation*}
		\sum_{\substack{G, H \\ \deg H = h}} \left|\hat{F}_{q,n}(G/H)\right| \leq \alpha \left(2h +  t_h - I\right)q^{n - t_h}~.
\end{equation*}
Thus from the assumption on $\alpha$ we get
\begin{eqnarray*}
	\sum_{\substack{G, H \\ \deg H = h}} \left|\hat{F}_{q,n}(G/H)\right| &\leq& q^{n - t_h + \varepsilon\left(2h +  t_h - I\right)} \\
		&=& 	q^{n - t_h(1 - \varepsilon) + \varepsilon\left(2h - I \right)}~.
\end{eqnarray*}
Since $t_h \geq \lfloor n / h \rfloor - 2$ and $\varepsilon < 1$, we get
\begin{eqnarray*}
	\sum_{\substack{G,H \\ \deg H = h}} \left|\hat{F}_{q,n}(G/H)\right|
		&\leq& q^{n - (\left\lfloor n/h \right\rfloor - 2)(1 - \varepsilon) + \varepsilon\left( 2h - I\right)} \\
		&\leq& q^{n - (n/h - 3)(1 - \varepsilon) + \varepsilon\left( 2h - I\right)} \\
		&=&  q^{n + 3(1 - \varepsilon) - (1 - \varepsilon)n/h + 2\varepsilon h - \varepsilon I}~,
\end{eqnarray*}
which is the first part of the proposition.

We now move to the case where $n/I \leq h \leq \min\{n/2,n/I\}$. For every choice of $0 \leq x \leq n - 2h$, write $K = K_{x,l}$ and $I_{\notin K} = \# (\mathcal{I} \backslash K)$. Lemma \ref{sum_over_deg_bound_init_bigi} gives us that 
\begin{equation*}
		Y_h \leq \alpha \left(2h +  I_{\notin K} - I\right)q^{n + 1 - I_{\notin K}}~.
\end{equation*}
Bounding $\alpha \left(2h +  I_{\notin K} - I\right)$ by $q^{\varepsilon \left(2h +  I_{\notin K} - I\right)}$ yields
\begin{equation*}
		Y_h \leq q^{n + 1 - I_{\notin K} + \varepsilon \left(2h +  I_{\notin K} - I\right)}~,
\end{equation*}
and simplifying yields
\begin{equation*}
		Y_h\leq q^{n - I + 1}q^{(1 - \varepsilon)(I - I_{\notin K}) + 2\varepsilon h}~.
\end{equation*}
Choosing the optimal $x$ in the sense of Lemma \ref{keta_exists}, we can assume $I_{\notin K} \geq I - 4hI/n$ in the case where $h \leq n/2$ and $I_{\notin K} \geq I - 3hI/n$ when $h < n/4$, thus
\begin{equation*}
		Y_h \leq 
		q^{n - I + 1}q^{(1 - \varepsilon)4Ih/n + 2\varepsilon h}
		= q^{n - I + 1}q^{\left(2\varepsilon + (1 - \varepsilon)4I/n \right) h}
\end{equation*}
for all $h \leq n/2$, and similarly
\begin{equation*}
		Y_h \leq
		q^{n - I + 1}q^{\left(2\varepsilon + (1 - \varepsilon)3I/n \right) h}
\end{equation*}
when $h < n/4$. 
This completes the proof of the proposition.
\end{proof}

\section{Proof of the Theorem \ref{main_thm}}\label{full_proof}

\subsection{Auxiliary Results}\label{fullproof_aux}
The following lemmas apply Proposition \ref{sum_over_deg_bound} to the setting of \mbox{Theorem \ref{main_thm}}. In all the lemmas in this section we assume the setting of Proposition \ref{sum_over_deg_bound}.  We have results of two types - when $I < o\left(n/\log(n)\right)$, we get a strong bound on the error term. When $I$ is larger, we take more care to derive the most from our methods.

\begin{lemma}\label{degpart_x1}
Assume that $\varepsilon < 1/2$. Denote $l = \min\{n/I, n/2\}$, $s = \sqrt{(1 - \varepsilon)/(1 - 2\varepsilon)}\sqrt{n}$, $m_{n,I,\varepsilon} = \min \{n/I,~s\}$, and  $y = \frac{I \cdot s}{n}$. Define $Y_h$ as in \eqref{y_h_def}. The inequality
\begin{equation}
	\sum_{h = 1}^{l} q^{-h} Y_h \leq 
		nq^{n  -I}q^{3(1 - \varepsilon) - (1 - 2\varepsilon)m_{n,I,\varepsilon}}
\end{equation}
holds.
If in addition $y > 1$, we have
\begin{equation}
	\sum_{h = 1}^{n/I} q^{-h} Y_h \leq 
		C_{q, \varepsilon, y}q^{n  -I}q^{3(1 - \varepsilon) - (1 - 2\varepsilon)n/I}~,
\end{equation}
with $C_{q, \varepsilon, y}$ tending to $1$ as $q$ tends to infinity.
\end{lemma}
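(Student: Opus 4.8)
The plan is to substitute the first estimate of Proposition~\ref{sum_over_deg_bound}, namely \eqref{prop_frac_1}, into the sum and then optimise the resulting exponent over $h$. For $1\le h\le\min\{n/2,n/I\}$ — which is exactly the summation range, since $l=\min\{n/I,n/2\}$ — inequality~\eqref{prop_frac_1}, together with $2\varepsilon h-h=-(1-2\varepsilon)h$, gives
$$q^{-h}Y_h\ \le\ q^{E(h)},\qquad E(h):=n-\varepsilon I+3(1-\varepsilon)-(1-\varepsilon)\tfrac nh-(1-2\varepsilon)h .$$
Since $\varepsilon<1/2$, the expression $(1-\varepsilon)\tfrac nh+(1-2\varepsilon)h$ is a sum of two positive terms, so by AM--GM it attains its unique minimum on $(0,\infty)$ at $h=s=\sqrt{(1-\varepsilon)/(1-2\varepsilon)}\,\sqrt n$, with value $2\sqrt{(1-\varepsilon)(1-2\varepsilon)n}=2(1-2\varepsilon)s$. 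Hence $E$ is strictly concave, increasing on $(0,s]$, with $\max_{h>0}E(h)=E(s)=n-\varepsilon I+3(1-\varepsilon)-2(1-2\varepsilon)s$. Two facts will be used below: the direct evaluation $E(n/I)=(n-I)+3(1-\varepsilon)-(1-2\varepsilon)\,n/I$, and the observation that, via $s^2=(1-\varepsilon)n/(1-2\varepsilon)$, the condition $y=Is/n\le1$ is equivalent to $(1-\varepsilon)I\le(1-2\varepsilon)s$, which in turn is precisely the inequality $E(s)\le(n-I)+3(1-\varepsilon)-(1-2\varepsilon)s$.

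For the first, unconditional bound I would use the crude estimate $\sum_{h=1}^{l}q^{-h}Y_h\le l\cdot\max_{1\le h\le l}q^{E(h)}$. Since $E$ restricted to $[1,l]$ is unimodal with peak at $h^\star:=\min\{s,l\}$, this is at most $n\cdot q^{E(h^\star)}$, so it remains to verify $E(h^\star)\le(n-I)+3(1-\varepsilon)-(1-2\varepsilon)\,m_{n,I,\varepsilon}$ by a short case split on which of $s$, $n/I$, $n/2$ is smallest. If $h^\star=s$, then $s\le n/I$, so $m_{n,I,\varepsilon}=s$ and the required inequality is exactly $(1-\varepsilon)I\le(1-2\varepsilon)s$, i.e. $y\le1$, which holds in this case. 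If $h^\star=n/I$, then $n/I\le s$, so $m_{n,I,\varepsilon}=n/I$ and the inequality is the evaluation of $E(n/I)$ above, with equality. The remaining possibility, $h^\star=n/2$ with $n/2<\min\{s,n/I\}$, forces $I\le1$ and $n<4(1-\varepsilon)/(1-2\varepsilon)$; there, after substituting the value of $E(n/2)$, the required inequality reduces to one of the shape $s\le n/2+c_\varepsilon$ (with $c_\varepsilon$ equal to $\tfrac{1-\varepsilon}{1-2\varepsilon}$ or $\tfrac{2(1-\varepsilon)}{1-2\varepsilon}$ depending on the value of $m_{n,I,\varepsilon}$), which follows from the AM--GM estimate $n/2+c_\varepsilon\ge 2\sqrt{(n/2)c_\varepsilon}\ge s$. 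Combining the cases gives the first displayed inequality.

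For the refined bound, assume $y>1$; then $n/I<s$, so the right endpoint of the summation lies on the increasing part of $E$ (and $I\ge2$ once $n$ exceeds a constant depending on $\varepsilon$, so that $n/I\le n/2$ and \eqref{prop_frac_1} applies throughout). Quantitatively, $E'(n/I)=(1-\varepsilon)I^2/n-(1-2\varepsilon)=(1-2\varepsilon)(y^2-1)>0$, using $y^2=(1-\varepsilon)I^2/\big((1-2\varepsilon)n\big)$. Since $E'$ is decreasing, for integers $h\le\lfloor n/I\rfloor$ we get $E(\lfloor n/I\rfloor)-E(h)\ge(\lfloor n/I\rfloor-h)\,E'(\lfloor n/I\rfloor)\ge(\lfloor n/I\rfloor-h)\,\delta$ with $\delta:=(1-2\varepsilon)(y^2-1)>0$, so the summands $q^{E(h)}$ decay at least geometrically with ratio $q^{-\delta}$ as $h$ decreases from $\lfloor n/I\rfloor$. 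Therefore
$$\sum_{h=1}^{n/I}q^{-h}Y_h\ \le\ \sum_{k\ge0}q^{E(\lfloor n/I\rfloor)-k\delta}\ =\ \frac{q^{E(\lfloor n/I\rfloor)}}{1-q^{-\delta}}\ \le\ \frac{q^{E(n/I)}}{1-q^{-\delta}}\ =\ \frac{1}{1-q^{-\delta}}\,q^{\,n-I}\,q^{\,3(1-\varepsilon)-(1-2\varepsilon)n/I},$$
the second inequality holding because $\lfloor n/I\rfloor\le n/I\le s$ and $E$ increases on $(0,s]$. Taking $C_{q,\varepsilon,y}=(1-q^{-\delta})^{-1}$, which tends to $1$ as $q\to\infty$ for the fixed positive constant $\delta$, completes the proof.

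I expect the main obstacle to be the case bookkeeping in the first bound — matching every ordering of the candidate maximisers $s$, $n/I$, $n/2$ with the piecewise value of $m_{n,I,\varepsilon}$, and correctly isolating the degenerate regime where $n/2$ is smallest (which pins $I\le1$ and $n$ below a constant depending on $\varepsilon$). The other step requiring care, though a quick one, is the identity $E'(n/I)=(1-2\varepsilon)(y^2-1)$: the sign condition $y>1$ is exactly what makes the refined sum summable as a geometric series with ratio bounded away from $1$.
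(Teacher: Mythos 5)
Your proposal is correct and follows essentially the same route as the paper's proof: substitute \eqref{prop_frac_1}, locate the peak of the resulting exponent at $h=s$, use an $n$-term union bound for the unconditional estimate, and sum a geometric series with ratio $q^{-(1-2\varepsilon)(y^2-1)}$ when $y>1$, arriving at the same constant $C_{q,\varepsilon,y}=\left(1-q^{-(1-2\varepsilon)(y^2-1)}\right)^{-1}$. The only cosmetic differences are that you obtain the geometric decay from concavity of the exponent rather than the paper's explicit increment-plus-induction computation, and that you spell out the degenerate case $n/2<\min\{s,n/I\}$ which the paper leaves implicit.
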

\begin{proof}
By \eqref{prop_frac_1}, we have
\begin{equation*}
	X := \sum_{h = 1}^{l} q^{-h} Y_h \leq 
	\sum_{h = 1}^{l} q^{-h}q^{n + 3(1 - \varepsilon) - (1 - \varepsilon)n/h + 2\varepsilon h - \varepsilon I}~.
\end{equation*}
By simplifying we get
\begin{equation}\label{x_by_rnh}
	X \leq q^{n + 3(1 - \varepsilon) - \varepsilon I} \sum_{h = 1}^{l} q^{- (1 - 2\varepsilon)h - (1 - \varepsilon)n/h}~.
\end{equation}
For ease of exposition, we denote the term $- (1 - 2\varepsilon)h - (1 - \varepsilon)n/h$ by $r_{n,\varepsilon}(h)$.
By \eqref{x_by_rnh}, in order to bound~$X$ it suffices to bound $ \sum_{h = 1}^{l}q^{r_{n,\varepsilon}(h)}$.

By deriving $r_{n, \varepsilon}$, we obtain that its maximum is attained at \mbox{$h_{\max} = s = \sqrt{(1 - \varepsilon)/(1 - 2\varepsilon)}\sqrt{n}$}. 
In the case where $y = \frac{I\cdot s}{n} \leq 1$, we use the union bound and get
\begin{eqnarray*}
	\sum_{h = 1}^{l}q^{r_{n,\varepsilon}(h)} &\leq& lq^{r_{n,\varepsilon}(s)} \leq nq^{r_{n,\varepsilon}(s)} \nonumber \\
		&=& nq^{-(1 - 2\varepsilon)s}q^{-(1 - \varepsilon)n/s}~.
\end{eqnarray*}
Noting that in this case $n/s \geq I$, we get
\begin{equation}\label{y_small_x1_case}
	\sum_{h = 1}^{l}q^{r_{n,\varepsilon}(h)} \leq nq^{-(1 - 2\varepsilon)s}q^{-(1 - \varepsilon)I}~.
\end{equation}

We now bound $ \sum_{h = 1}^{l}q^{r_{n,\varepsilon}(h)}$ when $y > 1$. For all $h < n/I$ we have 
\begin{equation}
	\frac{n}{h-1} - \frac{n}{h} = \frac{n}{h (h-1)} > \frac{I^2}{n} = \frac{y^2n}{s^2} = \frac{1 - 2\varepsilon}{1 - \varepsilon}y^2~,
\end{equation}
thus
\begin{eqnarray*}
	r_{n,\varepsilon}(h) - r_{n,\varepsilon}(h-1) &=& -1 + 2\varepsilon + (1 - \varepsilon)\left(\frac{n}{h-1} - \frac{n}{h}\right) \\
		&\geq& -1 + 2\varepsilon + (1 - \varepsilon) \frac{1 - 2\varepsilon}{1 - \varepsilon}y^2 \\
		&=& (1 - 2\varepsilon)(y^2 - 1)~.
\end{eqnarray*}
Write $m =  \lfloor n/I \rfloor$. 
By induction we obtain that for every $0 \leq j < m$ we have 
\begin{eqnarray*}
	r_{n,\varepsilon}(m - j) &\leq&  r_{n,\varepsilon}(m) - j(1 - 2\varepsilon)(y^2 - 1)~. \\
		&\leq& r_{n,\varepsilon}(n/I) - j(1 - 2\varepsilon)(y^2 - 1)~,
\end{eqnarray*}
where the latter inequality is due to monotonicity of $r_{n,\varepsilon}$ in the range $1 \leq h \leq n/I$ when $y > 1$. This means that we can bound $\sum_{h = 1}^{l}q^{r_{n,\varepsilon}(h)}$ by a geometric sum:
\begin{eqnarray}\label{y_large_x1_case}
	\sum_{h = 1}^{l}q^{r_{n,\varepsilon}(h)} &=& \sum_{j = 0}^{m - 1} q^{r_{n,\varepsilon}(n/I) - j(1 - 2\varepsilon)(y^2 - 1)} \nonumber \\
	&\leq& q^{r_{n,\varepsilon}(n/I)}
			\sum_{j = 0}^{m - 1}q^{-j(1-2\varepsilon)(y^2-1)} \nonumber \\
	&=& q^{-(1-2\varepsilon)n/I}q^{-(1-\varepsilon)I} \sum_{j = 0}^{m - 1}q^{-j(1-2\varepsilon)(y^2-1)}~.
\end{eqnarray}
We now turn to bound the right-hand sum $S = \sum_{j = 0}^{m - 1}q^{-j(1-2\varepsilon)(y^2-1)}$ in two ways. First, we note that $(1-2\varepsilon)(y^2-1) > 0$, hence a simple union bound gives us
\begin{equation*}
	S = \sum_{j = 0}^{m - 1}q^{-j(1-2\varepsilon)(y^2-1)} \leq m-1 < n~,
\end{equation*}
which combined with \eqref{y_large_x1_case} gives
\begin{equation}\label{union_sumj_bound}
	\sum_{h = 1}^{l}q^{r_{n,\varepsilon}(h)} < nq^{-(1-2\varepsilon)n/I}q^{-(1-\varepsilon)I} ~.
\end{equation}
Second, we treat $S$ as a geometric series, in which case we bound it by the infinite series
\begin{equation}\label{geom_sumj_bound}
	S = \sum_{j = 0}^{m - 1}q^{-j(1-2\varepsilon)(y^2-1)} \leq \frac{1}{1 - q^{-(1-2\varepsilon)(y^2-1)}}~.
\end{equation}
Writing 
\begin{equation*}
C_{q, \varepsilon, y} = \left(1 - q^{-(1-2\varepsilon)(y^2-1)}\right)^{-1}~,
\end{equation*} we note that $C_{q, \varepsilon, y}$ tends to $1$ as $q$ tends to infinity.
Inserting \eqref{geom_sumj_bound} into \eqref{y_large_x1_case} then gives
\begin{equation}\label{geom_sumj_bound2}
	\sum_{h = 1}^{n/I}q^{r_{n,\varepsilon}(h)} \leq C_{q, \varepsilon, y}q^{-(1-2\varepsilon)n/I}q^{-(1-\varepsilon)I}~,
\end{equation}
and using this in  \eqref{x_by_rnh} yields that when $y > 1$,
\begin{equation}
	X \leq C_{q, \varepsilon, y}q^{n  -I}q^{3(1 - \varepsilon) - (1 - 2\varepsilon)m_{n,I,\varepsilon}}~,
\end{equation}
with $C_{q, \varepsilon, y}$ tending to $1$ as $q$ tends to infinity. This concludes the second part of the lemma.

For the first part, recall that $m_{n,I,\varepsilon} = \min \{n/I,s\}$. Inserting \eqref{y_small_x1_case} in the case where $y \leq 1$ and \eqref{union_sumj_bound} in the case where $y > 1$ into \eqref{x_by_rnh} gives
\begin{equation}
	X = q^{n  + 3(1 - \varepsilon) - \varepsilon I}\sum_{h = 1}^{l}q^{r_{n,\varepsilon}(h)} \leq nq^{n  -I}q^{3(1 - \varepsilon) - (1 - 2\varepsilon)m_{n,I,\varepsilon}}
\end{equation}
for all $y$.
This completes the proof of the lemma.
\end{proof}

\begin{lemma}\label{degpart_big_bound} Let $n/I \leq k \leq n/2$. Assuming $\frac{I}{n} < \frac{1}{4} \cdot \frac{1-2\varepsilon}{1-\varepsilon}(1 - \tau)$ for some $\tau > 0$, we have 
\begin{equation}
	\sum_{h = k}^{n/2} q^{-h} Y_h < 
		C_{q,\varepsilon,\tau}q^{n-I+1}q^{-k\left(1 - 2\varepsilon - 4(1 - \varepsilon)I/n\right)}~,
\end{equation}
with $C_{q,\varepsilon,\tau}$ tending to $1$ as $q$ tends to infinity.
\end{lemma}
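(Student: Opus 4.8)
The plan is to bound the sum $\sum_{h=k}^{n/2} q^{-h} Y_h$ by invoking Proposition \ref{sum_over_deg_bound}, specifically the bound \eqref{prop_frac_2}, which is valid throughout the range $h \leq n/2$. That bound gives $Y_h \leq q^{n-I+1} q^{(2\varepsilon + (1-\varepsilon)4I/n)h}$, so after multiplying by $q^{-h}$ each term of the sum is at most
\[
q^{n-I+1} q^{-h(1 - 2\varepsilon - 4(1-\varepsilon)I/n)}.
\]
First I would check that the exponent $1 - 2\varepsilon - 4(1-\varepsilon)I/n$ is strictly positive: this is exactly where the hypothesis $\frac{I}{n} < \frac14 \cdot \frac{1-2\varepsilon}{1-\varepsilon}(1-\tau)$ enters, since it yields $4(1-\varepsilon)I/n < (1-2\varepsilon)(1-\tau)$, hence $1 - 2\varepsilon - 4(1-\varepsilon)I/n > (1-2\varepsilon)\tau > 0$ (recall $\varepsilon < 1/2$ in this setting). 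Positivity of this exponent means $\sum_{h \geq k} q^{-h(1-2\varepsilon-4(1-\varepsilon)I/n)}$ is a convergent geometric series with ratio $q^{-(1-2\varepsilon-4(1-\varepsilon)I/n)} < 1$, dominated by its first term times $\big(1 - q^{-(1-2\varepsilon-4(1-\varepsilon)I/n)}\big)^{-1}$.

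The second step is to carry out the geometric summation explicitly. Setting $\delta = 1 - 2\varepsilon - 4(1-\varepsilon)I/n$, we get
\[
\sum_{h=k}^{n/2} q^{-h} Y_h \leq q^{n-I+1} \sum_{h=k}^{\infty} q^{-h\delta} \leq q^{n-I+1} q^{-k\delta} \cdot \frac{1}{1 - q^{-\delta}}.
\]
Defining $C_{q,\varepsilon,\tau} = \big(1 - q^{-\delta}\big)^{-1}$, one observes that since $\delta > (1-2\varepsilon)\tau$ is bounded below by a quantity depending only on $\varepsilon$ and $\tau$ (not on $q$), the denominator $1 - q^{-\delta} \to 1$ as $q \to \infty$, uniformly in $n$; hence $C_{q,\varepsilon,\tau} \to 1$ as $q \to \infty$. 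Substituting $\delta$ back in gives precisely the claimed bound $C_{q,\varepsilon,\tau} q^{n-I+1} q^{-k(1-2\varepsilon - 4(1-\varepsilon)I/n)}$.

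This argument is essentially routine once Proposition \ref{sum_over_deg_bound} is in hand; there is no serious obstacle. The only point requiring care is the bookkeeping on the constant $C_{q,\varepsilon,\tau}$: one must make sure the lower bound on $\delta$ is genuinely independent of $q$ and of $n$ (it is, since it depends only on $\varepsilon$ and $\tau$ after applying the hypothesis on $I/n$), so that the convergence $C_{q,\varepsilon,\tau}\to 1$ is legitimate and the bound is uniform in $n$. One should also note that the hypothesis $n/I \leq k \leq n/2$ guarantees the geometric series starts within the valid range of \eqref{prop_frac_2}, and that $k \geq n/I \geq 1$ so the series is nonempty and the truncation to $h \leq n/2$ only removes nonnegative terms, keeping the inequality intact. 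A brief remark that the constant can be taken uniform over the allowed range of $k$ would complete the write-up.
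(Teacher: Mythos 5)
Your proposal is correct and follows essentially the same route as the paper: apply \eqref{prop_frac_2}, sum the resulting geometric series with ratio $q^{-(1-2\varepsilon-4(1-\varepsilon)I/n)}$, and use the hypothesis on $I/n$ to get the ratio below $q^{-(1-2\varepsilon)\tau}<1$. The only cosmetic difference is that the paper defines $C_{q,\varepsilon,\tau}=\left(1-q^{-(1-2\varepsilon)\tau}\right)^{-1}$ (so the constant literally depends only on $q,\varepsilon,\tau$) and bounds $1/(1-r)$ by it, whereas you define the constant through $\delta$ itself and then argue uniformity via the lower bound $\delta>(1-2\varepsilon)\tau$ --- which amounts to the same thing.
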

\begin{proof}
By the \eqref{prop_frac_2}, we know that 
$Y_h \leq q^{n -I + 1}q^{\left(2\varepsilon + (1 - \varepsilon)4I/n \right) h}$ for all $k \leq h \leq n/2$. 
Thus
\begin{eqnarray*}
	\sum_{h = k}^{n/2} q^{-h}Y_h
			&\leq& \sum_{h = k}^{n/2} q^{-h}q^{n -I + 1}q^{\left(2\varepsilon + (1 - \varepsilon)4I/n \right) h}\\
		&=& q^{n-I+1}\sum_{h = k}^{n/2}q^{-h\left(1 - 2\varepsilon - 4(1 - \varepsilon)I/n \right)}
\end{eqnarray*}
Substituting $r = q^{-\left(1 - 2\varepsilon - 4(1 - \varepsilon)I/n \right)}$, we get
\begin{equation*}
		\sum_{h = k}^{n/2} q^{-h}Y_h \leq q^{n - I + 1} \sum_{h = k}^{n/2}r^h~.
\end{equation*}
Note that from the assumption, $1 - 2\varepsilon - 4(1 - \varepsilon)I/n > (1 - 2\varepsilon)\tau > 0$. So 
\begin{equation}\label{r_bound}
r < q^{-(1 - 2\varepsilon)\tau} < 1~,
\end{equation}
and the sum is geometric. Thus we can bound it as
\begin{eqnarray*}
		\sum_{h = k}^{n/2} q^{-h} \sum_{\substack{G,H \\ \deg H = h}} \left|\hat{F}_{q,n}(G/H)\right|
			&\leq& q^{n - I + 1} \cdot r^{k} \cdot \frac{1}{1-r}\\
		&\leq& \frac{1}{1-r}q^{n-I+1}q^{-k\left(1 - 2\varepsilon - 4(1 - \varepsilon)I/n\right)}~.
\end{eqnarray*}
Write
\begin{equation*}
 C_{q,\varepsilon,\tau} =  \left(1 - q^{-(1-2\varepsilon)\tau}\right)^{-1}~.
\end{equation*}
Then $C_{q,\varepsilon,\tau}$ tends to $1$ as $q$ tends to infinity, and from \eqref{r_bound} we get that $1/(1-r) < C_{q,\varepsilon,\tau}$. Thus
\begin{equation*}
		\sum_{h = n/4}^{n/2} q^{-h} \sum_{\substack{G,H \\ \deg H = h}} \left|\hat{F}_{q,n}(G/H)\right|
			< C_{q,\varepsilon,\tau}q^{n-I+1}q^{-k\left(1 - 2\varepsilon - 4(1 - \varepsilon)I/n\right)}~,
\end{equation*}
as needed.
\end{proof}

\begin{lemma}\label{degpart_small_bound} Assuming $\frac{I}{n} < \frac{1}{4} \cdot \frac{1-2\varepsilon}{1-\varepsilon}(1 - \tau)$, we have 
\begin{equation}
	\sum_{n/I \leq h < n/4} q^{-h} \sum_{\substack{G,H \\ \deg H = h}} \left|\hat{F}_{q,n}(G/H)\right| < 
		C_{q,\varepsilon,\tau}q^{n-I+1}q^{-(1 - 2\varepsilon - 3(1 - \varepsilon)I/n)n/I}~,
\end{equation}
where $C_{q,\varepsilon,\tau}$ is given in Lemma \ref{degpart_big_bound}.
\end{lemma}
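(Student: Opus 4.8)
The plan is to mirror the proof of Lemma~\ref{degpart_big_bound}, but this time invoking the sharper bound \eqref{prop_frac_3} instead of \eqref{prop_frac_2}, which is valid precisely in the range $h < n/4$ that we are summing over. First I would apply \eqref{prop_frac_3} to write, for each $n/I \leq h < n/4$,
\begin{equation*}
	q^{-h}Y_h \leq q^{n-I+1}q^{-h\left(1 - 2\varepsilon - 3(1-\varepsilon)I/n\right)}~,
\end{equation*}
so that after substituting $r' = q^{-\left(1 - 2\varepsilon - 3(1-\varepsilon)I/n\right)}$ the sum $\sum_{n/I \leq h < n/4} q^{-h}Y_h$ is bounded by $q^{n-I+1}\sum_{n/I \leq h < n/4}(r')^h$.

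Next I would verify the decay. Since $3(1-\varepsilon)I/n < 4(1-\varepsilon)I/n$, the hypothesis $\frac{I}{n} < \frac14 \cdot \frac{1-2\varepsilon}{1-\varepsilon}(1-\tau)$ gives $1 - 2\varepsilon - 3(1-\varepsilon)I/n > 1 - 2\varepsilon - 4(1-\varepsilon)I/n > (1-2\varepsilon)\tau > 0$, exactly as in Lemma~\ref{degpart_big_bound}; hence $r' < q^{-(1-2\varepsilon)\tau} < 1$ and the geometric series is summable. Bounding the finite geometric sum starting at $h = \lceil n/I \rceil$ by the tail of the infinite series gives
\begin{equation*}
	\sum_{n/I \leq h < n/4}(r')^h \leq (r')^{n/I}\cdot\frac{1}{1-r'} < C_{q,\varepsilon,\tau}(r')^{n/I}~,
\end{equation*}
where $C_{q,\varepsilon,\tau} = \left(1 - q^{-(1-2\varepsilon)\tau}\right)^{-1}$ is the same constant as in Lemma~\ref{degpart_big_bound} and tends to $1$ as $q \to \infty$ (here I use $\frac{1}{1-r'} < \frac{1}{1-q^{-(1-2\varepsilon)\tau}}$ from $r' < q^{-(1-2\varepsilon)\tau}$). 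Combining, $\sum_{n/I \leq h < n/4} q^{-h}Y_h < C_{q,\varepsilon,\tau}q^{n-I+1}q^{-(1-2\varepsilon-3(1-\varepsilon)I/n)n/I}$, which is the claimed bound.

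I do not expect any genuine obstacle here: the argument is essentially a word-for-word adaptation of Lemma~\ref{degpart_big_bound}, the only substantive change being the exponent $3$ in place of $4$ and the truncation of the summation range at $n/4$ rather than $n/2$. The one small point to be careful about is that the summation index starts at $h = \lceil n/I\rceil$ rather than $h = 1$, so when I bound the finite geometric sum I should factor out $(r')^{\lceil n/I\rceil}$ and note $(r')^{\lceil n/I\rceil} \leq (r')^{n/I}$ since $r' < 1$; this is what produces the $q^{-(\cdots)n/I}$ factor in the statement. Everything else — the positivity of the exponent, the identification of the constant, the passage to the infinite geometric series — is identical to the preceding lemma.
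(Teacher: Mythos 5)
Your proposal is correct and is essentially identical to the paper's own proof, which likewise replaces \eqref{prop_frac_2} by \eqref{prop_frac_3}, sets $r_2 = q^{-\left(1-2\varepsilon-3(1-\varepsilon)I/n\right)} < q^{-(1-2\varepsilon)\tau} < 1$, and bounds the geometric sum starting at $h = n/I$ by $\frac{1}{1-r_2}r_2^{n/I}$ with the same constant $C_{q,\varepsilon,\tau}$. Your extra care about the ceiling $\lceil n/I\rceil$ and the comparison $3(1-\varepsilon)I/n < 4(1-\varepsilon)I/n$ are exactly the details the paper leaves implicit.
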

\begin{proof}

The proof is essentially the same as that of Lemma \ref{degpart_big_bound}, the difference being that we use \eqref{prop_frac_3} instead of \eqref{prop_frac_2}.
Writing 
\mbox{$r_2 = q^{-\left(1 - 2\varepsilon - 3(1 - \varepsilon)I/n \right)}$}, we have $r_2 < q^{-(1 - 2\varepsilon)\tau} < 1$, and
\begin{eqnarray*}
		\sum_{h = n/I}^{n/4} q^{-h} \sum_{\substack{G,H \\ \deg H = h}} \left|\hat{F}_{q,n}(G/H)\right|
			&\leq& \frac{1}{1-r_2}q^{n-I+1}r_2^{n/I} \\
			&\leq& C_{q,\varepsilon,\tau}q^{n-I+1}q^{-(1 - 2\varepsilon - 3(1 - \varepsilon)I/n)n/I}~,
\end{eqnarray*}
as required.
\end{proof}

\begin{lemma}\label{err_term_smalli_bnd}
	Assume that $\frac{I}{n} < \frac{1}{4} \cdot \frac{1-2\varepsilon}{1-\varepsilon}(1 - \tau)$. Denote 
	$s = \sqrt{(1 - \varepsilon)/(1 - 2\varepsilon)}\sqrt{n}$, $y = \frac{I \cdot s}{n}$, and \mbox{$m_{n,I,\varepsilon} = \min \{n/I,~s\}$}. Define $Y_h$ as in \eqref{y_h_def}.
	If $I = o\left(n/\log(n)\right)$, then
	\begin{equation*}
		\sum_{h = 1}^{n/2} q^{-h}Y_h \leq
			q^{n-I}q^{-\left(1 - 2\varepsilon + o(1)\right)m_{n,I,\varepsilon}}~.
	\end{equation*}
\end{lemma}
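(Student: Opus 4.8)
The plan is to break $\sum_{h=1}^{n/2} q^{-h}Y_h$ at $h = l := \min\{n/I,\, n/2\}$ and bound the two ranges separately, using only the estimates already proved in this subsection. For $1 \le h \le l$ I would quote the first inequality of Lemma \ref{degpart_x1} verbatim, which gives $\sum_{h=1}^{l} q^{-h} Y_h \le n\, q^{n-I}\, q^{3(1-\varepsilon) - (1-2\varepsilon)m_{n,I,\varepsilon}}$. If $I \le 2$ then $n/I \ge n/2$, so $l = n/2$ and this already is the full sum; there is nothing further to do in that case.

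So assume $I \ge 3$, whence $l = n/I < n/2$ and the range $n/I < h \le n/2$ remains. Here I would invoke Lemma \ref{degpart_big_bound} with $k$ the least integer exceeding $n/I$: the hypothesis $\frac{I}{n} < \frac14 \cdot \frac{1-2\varepsilon}{1-\varepsilon}(1-\tau)$ is precisely what makes that lemma applicable, and it forces the exponent $\gamma := 1 - 2\varepsilon - 4(1-\varepsilon)I/n$ to be positive, so the bound $C_{q,\varepsilon,\tau}\, q^{n-I+1}\, q^{-\gamma k}$ is largest at the smallest admissible $k$. Using $k \ge n/I$ then gives $\sum_{n/I < h \le n/2} q^{-h} Y_h \le C_{q,\varepsilon,\tau}\, q^{n-I+1}\, q^{-(1-2\varepsilon)n/I + 4(1-\varepsilon)}$. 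The decisive point is the trivial inequality $m_{n,I,\varepsilon} = \min\{n/I,\, s\} \le n/I$, which (since $1-2\varepsilon>0$) turns the exponent $-(1-2\varepsilon)n/I$ into the stronger $-(1-2\varepsilon)m_{n,I,\varepsilon}$.

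To finish I would absorb the stray multiplicative factors — $n\,q^{3(1-\varepsilon)}$ from the head and $C_{q,\varepsilon,\tau}\,q^{5-4\varepsilon}$ from the tail — into $q^{o(m_{n,I,\varepsilon})}$. This is exactly where the hypothesis $I = o(n/\log n)$ enters: it forces $(n/I)/\log_q n \to \infty$, and since trivially $s \ge \sqrt n$ gives $s/\log_q n \to \infty$ as well, we obtain $m_{n,I,\varepsilon}/\log_q n \to \infty$; in particular $\log_q n = o(m_{n,I,\varepsilon})$, and any bounded quantity — notably $C_{q,\varepsilon,\tau}$, which tends to $1$ — is $o(m_{n,I,\varepsilon})$ too. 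Adding the two contributions yields $\sum_{h=1}^{n/2} q^{-h}Y_h \le q^{n-I}\, q^{-(1-2\varepsilon)m_{n,I,\varepsilon} + o(m_{n,I,\varepsilon})}$, which is the assertion.

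I do not expect a genuine obstacle: Lemmas \ref{degpart_x1} and \ref{degpart_big_bound} were built with exactly this split in mind. The only care needed is bookkeeping — turning $l$ and $k$ into honest integers so that the two ranges together cover $1 \le h \le \lfloor n/2\rfloor$ (a one-term overlap, should it occur, is harmless since we only want an upper bound), confirming $\gamma > 0$ from the standing hypothesis, and checking that $m_{n,I,\varepsilon}$ outgrows $\log n$ so the leftover factors vanish into the $o(\cdot)$ — all immediate from the hypotheses already in force.
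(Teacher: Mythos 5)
Your proposal is correct and follows essentially the same route as the paper: split the sum at $\min\{n/I,n/2\}$, bound the head by the first part of Lemma \ref{degpart_x1}, bound the tail by Lemma \ref{degpart_big_bound} with $k$ at (or just above) $n/I$, pass from $n/I$ to $m_{n,I,\varepsilon}=\min\{n/I,s\}$ via $m_{n,I,\varepsilon}\le n/I$ and $1-2\varepsilon>0$, and absorb the factors $n$, $q^{O(1)}$, $C_{q,\varepsilon,\tau}$ into $q^{o(m_{n,I,\varepsilon})}$ using $\log_q n = o(m_{n,I,\varepsilon})$, which is exactly what the paper does. Your extra care with the edge case $I\le 2$ and with integer endpoints is harmless bookkeeping that the paper glosses over.
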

\begin{proof}
We begin by partitioning the sum $	\sum_{h = 1}^{n/2} q^{-h}Y_h$ into two parts:
\begin{equation*}
	\sum_{h = 1}^{n/2} q^{-h}D_h =\quad \underbrace{\sum_{h = 1}^{\min\{n/2, n/I\}} q^{-h}Y_h}_{=:X_1} \quad + \quad \underbrace{\sum_{h = n/I}^{n/2} q^{-h}Y_h}_{=:X_2}~.
\end{equation*}
Since $I = o\left(n/\log(n)\right)$, it follows that $\log_q(n) = o(n/I)$. 
Applying Lemma \ref{degpart_x1} gives
\begin{eqnarray*}
	X_1 &\leq& \frac{n}{2}q^{n  -I}q^{3(1 - \varepsilon) - (1 - 2\varepsilon)m_{n,I,\varepsilon}}  \\
		&\leq& q^{n-I}q^{-(1 - 2\varepsilon)m_{n,I,\varepsilon} + \log_q(n) + 3(1 - \varepsilon)}~.
\end{eqnarray*}
Since $\log_q(n) = o(m_{n,I,\varepsilon})$ we obtain
\begin{equation}
	X_1 \leq q^{n-I}q^{-\left(1 - 2\varepsilon + o(1)\right)m_{n,I,\varepsilon}}~.
\end{equation} 
Using Lemma \ref{degpart_big_bound} with $k = n/I$ gives
\begin{eqnarray*}
	X_2 &\leq& C_{q,\varepsilon,\tau}q^{n-I+1}q^{-\left(1 - 2\varepsilon + 4(1 - \varepsilon)I/n\right)n/I} \\
		&=& q^{n-I}q^{-(1 - 2\varepsilon)n/I + 1 + 4(1 - \varepsilon) + \log_q(C_{q,\varepsilon,\tau})} \\
		&=& q^{n-I}q^{-(1-2\varepsilon + o(1))n/I}~,
\end{eqnarray*}
and since $n/I \leq m_{n,I,\varepsilon}$ we get
\begin{eqnarray*}
	\sum_{h = 1}^{n/2} q^{-h}Y_h &=& X_1 + X_2 ~\leq~ 2q^{n-I}q^{-\left(1 - 2\varepsilon + o(1)\right)m_{n,I,\varepsilon}} \\
	&=& q^{n-I}q^{-\left(1 - 2\varepsilon + o(1)\right)m_{n,I,\varepsilon}}~.
\end{eqnarray*}
This concludes the proof of the lemma.

\end{proof}

\begin{lemma}\label{err_term_bigi_bnd}
	Assume that $\frac{I}{n} < \frac{1}{4} \cdot \frac{1-2\varepsilon}{1-\varepsilon}(1 - \tau)$. Denote 
	$s = \sqrt{(1 - \varepsilon)/(1 - 2\varepsilon)}\sqrt{n}$, $y = \frac{I \cdot s}{n}$. Define $Y_h$ as in \eqref{y_h_def}.
	Assume that $y > 1$ and $n$ is sufficiently large in terms of $\varepsilon$ and $\tau$. The inequality
	\begin{equation*}
		\sum_{h = 1}^{n/2} q^{-h}Y_h \leq
			q^{n-I}q^{-(1 - 2\varepsilon)n/I + 4 - 3 \varepsilon + B_{q,\varepsilon,\tau,y}}~
	\end{equation*}
	holds, with $B_{q,\varepsilon,\tau,y}$ tending to zero as $q$ grows to infinity.
\end{lemma}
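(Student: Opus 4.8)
The plan is to imitate the two-range argument in the proof of Lemma~\ref{err_term_smalli_bnd}, but to subdivide the large-$h$ range into two pieces, because in the regime $y>1$ the additive constant in the exponent has to be tracked exactly. Since $y=Is/n>1$ forces $n/I<s$, while the hypothesis $\tfrac In<\tfrac14\cdot\tfrac{1-2\varepsilon}{1-\varepsilon}(1-\tau)$ gives $I<n/4$ (hence $n/I>4$) and $s=\sqrt{(1-\varepsilon)/(1-2\varepsilon)}\sqrt n$ grows, we have $4<n/I<n/4<n/2$ once $n$ is large in terms of $\varepsilon$ and $\tau$. Accordingly I would write
\[
\sum_{h=1}^{n/2}q^{-h}Y_h = X_1+X_2+X_3 ,
\]
where $X_1=\sum_{1\le h\le n/I}q^{-h}Y_h$, $X_2=\sum_{n/I\le h<n/4}q^{-h}Y_h$ and $X_3=\sum_{n/4\le h\le n/2}q^{-h}Y_h$, the (harmless) overlaps at the endpoints only strengthening the resulting upper bound.

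Next I would estimate the three pieces separately. For $X_1$ the second part of Lemma~\ref{degpart_x1} applies (this is exactly where $y>1$ is used), giving $X_1\le C_{q,\varepsilon,y}\,q^{n-I}q^{3(1-\varepsilon)-(1-2\varepsilon)n/I}$. For $X_2$, Lemma~\ref{degpart_small_bound} together with the identity $\bigl(1-2\varepsilon-3(1-\varepsilon)I/n\bigr)\cdot\tfrac nI=(1-2\varepsilon)\tfrac nI-3(1-\varepsilon)$ yields
\[
X_2< C_{q,\varepsilon,\tau}\,q^{n-I+1}q^{-(1-2\varepsilon)n/I+3(1-\varepsilon)} = C_{q,\varepsilon,\tau}\,q^{n-I}q^{-(1-2\varepsilon)n/I+4-3\varepsilon},
\]
and this is precisely the term that produces the constant $4-3\varepsilon$ in the statement (here the sharper estimate \eqref{prop_frac_3} behind Lemma~\ref{degpart_small_bound} is what makes $4-3\varepsilon$ rather than $5-4\varepsilon$). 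For $X_3$, Lemma~\ref{degpart_big_bound} with $k=n/4$ gives $X_3< C_{q,\varepsilon,\tau}\,q^{n-I+1}q^{-(n/4)(1-2\varepsilon)+(1-\varepsilon)I}$.

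The main obstacle, and the only place where ``$n$ sufficiently large in terms of $\varepsilon$ and $\tau$'' is genuinely needed, is to absorb $X_3$ into the same shape as $X_2$. Writing the hypothesis as $(1-\varepsilon)I<\tfrac14 n(1-2\varepsilon)(1-\tau)$, one gets $1-(n/4)(1-2\varepsilon)+(1-\varepsilon)I< 1-\tfrac14 n(1-2\varepsilon)\tau$, which tends to $-\infty$ linearly in $n$, whereas $-(1-2\varepsilon)n/I+4-3\varepsilon\ge -(1-2\varepsilon)s+4-3\varepsilon$ (since $n/I<s$) decreases only like $-\sqrt n$; hence for $n$ large in terms of $\varepsilon,\tau$ we have $1-(n/4)(1-2\varepsilon)+(1-\varepsilon)I\le -(1-2\varepsilon)n/I+4-3\varepsilon$, which converts the bound for $X_3$ into $X_3\le C_{q,\varepsilon,\tau}\,q^{n-I}q^{-(1-2\varepsilon)n/I+4-3\varepsilon}$. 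Combining the three estimates, and using $3(1-\varepsilon)\le4-3\varepsilon$ to bring $X_1$ into the same shape, gives
\[
\sum_{h=1}^{n/2}q^{-h}Y_h\le\bigl(C_{q,\varepsilon,y}+2C_{q,\varepsilon,\tau}\bigr)\,q^{n-I}q^{-(1-2\varepsilon)n/I+4-3\varepsilon},
\]
so the lemma follows with $B_{q,\varepsilon,\tau,y}=\log_q\bigl(C_{q,\varepsilon,y}+2C_{q,\varepsilon,\tau}\bigr)$, which tends to $\log_q 3\to 0$ as $q\to\infty$ since $C_{q,\varepsilon,y}\to1$ and $C_{q,\varepsilon,\tau}\to1$ by Lemmas~\ref{degpart_x1} and~\ref{degpart_big_bound}.
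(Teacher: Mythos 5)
Your proposal is correct and follows essentially the same route as the paper: the same three-range decomposition at $n/I$ and $n/4$, the same applications of Lemmas~\ref{degpart_x1}, \ref{degpart_small_bound} and \ref{degpart_big_bound}, and the same constant $B_{q,\varepsilon,\tau,y}=\log_q\left(C_{q,\varepsilon,y}+2C_{q,\varepsilon,\tau}\right)$. The only cosmetic difference is that you absorb $X_3$ by comparing its exponent directly with the target $-(1-2\varepsilon)n/I+4-3\varepsilon$, whereas the paper compares it with the $X_2$ bound; both hinge on the same observation that the hypothesis on $I/n$ makes the $X_3$ exponent decay linearly in $n$ while the target decays only like $\sqrt{n}$.
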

\begin{proof}
	We give a partition
	to $\sum_{h = 1}^{n/2} q^{-h}Y_h$:
\begin{equation}\label{partition_of_three}
	\sum_{h = 1}^{n/2} q^{-h}Y_h = \quad \underbrace{\sum_{h = 1}^{\min{n/2, n/I}} q^{-h}Y_h}_{=:X_1} \quad
	+ \quad \underbrace{\sum_{n/I \leq h < n/4} q^{-h}Y_h}_{=:X_2} \quad
	+ \quad \underbrace{\sum_{h = n/4}^{n/2} q^{-h}Y_h}_{=:X_3}
\end{equation}
Under the assumptions of the lemma we know that $I > \sqrt{(1-2\varepsilon)/(1-\varepsilon)}\sqrt{n}$, hence by considering $n$ that are sufficiently large in terms of $\varepsilon$ we may assume that $n/I < n/4$.
We bound $X_1$ using the second part of Lemma  \ref{degpart_x1}:
\begin{equation*}
	X_1 \leq C_{q, \varepsilon, y}q^{n  -I}q^{3(1 - \varepsilon) - (1 - 2\varepsilon)n/I}~,
\end{equation*}
with $C_{q, \varepsilon, y}$ tending to $1$ as $q$ tends to infinity.
We use Lemma \ref{degpart_small_bound} in order to bound $X_2$, and that gives
\begin{equation*}
	X_2 	\leq C_{q,\varepsilon,\tau}q^{n-I+1}q^{-\left(1 - 2\varepsilon - 3(1 - \varepsilon)I/n\right)n/I}~,
\end{equation*}
with $C_{q,\varepsilon,\tau}$ also tending to $1$ as $q$ tends to infinity.
For $X_3$, we use Lemma \ref{degpart_big_bound} with $k = n/4$. From that we have
\begin{eqnarray*}
	X_3 &\leq& q^{n-I+1}q^{-\left(1 - 2\varepsilon - 4(1 - \varepsilon)I/n\right)n/4}~.
\end{eqnarray*}
Now we show that if $n$ is sufficiently large we can guarantee that the bound on $X_2$ dominates the bound on $X_3$. 
This happens when
\begin{equation}\label{no_beta_yet}
	-\left(1 - 2\varepsilon - 3(1 - \varepsilon)I/n\right)n/I \geq -\left(1 - 2\varepsilon - 4(1 - \varepsilon)I/n\right)n/4~.
\end{equation}
Writing $\beta = 1 - 2\varepsilon - 4(1 - \varepsilon)I/n$, this inequality translates into
\begin{equation}
	-\left(\beta + (1-\varepsilon)I/n\right) n/I \geq -\beta n/4~.
\end{equation}
Simplifying this inequality gives
\begin{align}\label{n_beta_ineq}
	-\beta n/I - 1 - \varepsilon ~&\geq~ - \beta n / 4 \nonumber\\
	n/4 - n/I ~& \geq~ (1 - \varepsilon)/\beta 
\end{align}
Note that the assumption $\frac{I}{n} < \frac{1}{4} \cdot \frac{1-2\varepsilon}{1-\varepsilon}(1 - \tau)$ gives us that
$\beta = 1 - 2\varepsilon - 4(1 - \varepsilon)I/n \geq \tau$, hence $(1 - \varepsilon)/\beta <  (1 - \varepsilon)/\tau$. Since $I \gg_\varepsilon \sqrt{n}$, the left-hand side of \eqref{n_beta_ineq} behaves asymptotically like $n$, so for large $n$ we get
\begin{equation*}
	n/4 - n/I ~\geq~ (1 - \varepsilon)/\tau ~\geq~ (1 - \varepsilon)/\beta~.
\end{equation*}
This implies that
\begin{eqnarray*}
	X_2, X_3 &\leq& C_{q,\varepsilon,\tau}q^{n-I+1}q^{-\left(1 - 2\varepsilon - 3(1 - \varepsilon)I/n\right)n/I} \\
		&=& C_{q,\varepsilon,\tau}q^{n-I}q^{-(1-2\varepsilon)n/I + 4 - 3\varepsilon}~.
\end{eqnarray*}
Thus
\begin{eqnarray*}
	X_1 + X_2 + X_3 &\leq& C_{q, \varepsilon, y}q^{n  -I}q^{3(1 - \varepsilon) - (1 - 2\varepsilon)n/I} +
			2C_{q,\varepsilon,\tau}q^{n-I}q^{-(1-2\varepsilon)n/I + 4 - 3\varepsilon} \\
			&\leq& q^{n-I}q^{-(1-2\varepsilon)n/I + 4 - 3\varepsilon + \log_q(C_{q, \varepsilon, y} + 2C_{q,\varepsilon,\tau})}~.
\end{eqnarray*}
Writing $B_{q,\varepsilon,\tau,y} = \log_q(C_{q, \varepsilon, y} + 2C_{q,\varepsilon,\tau})$ and recalling that $C_{q,\varepsilon, y} $ and $C_{q, \varepsilon, \tau}$ are bounded with respect to $q$, it is clear that $B_{q,\varepsilon,\tau,y}$ tends to zero as $q$ tends to infinity. 
Inserting this into \eqref{partition_of_three} gives
\begin{equation*}
		\sum_{h = 1}^{n/2} q^{-h}\sum_{\substack{G,H \\ \deg H = h}} \left|\hat{F}_{q,n}(G/H)\right| \leq
			q^{n-I}q^{-(1 - 2\varepsilon)n/I + 4 - 3 \varepsilon + B_{q,\varepsilon,\tau,y}}~,
	\end{equation*} 
as required.

\end{proof}

\subsection{A partition}

Recall that as in \eqref{parseval_formula}
\begin{equation}\label{fourier_transform_main}
	\sum_{P \in \mathcal{C}} 1 = \frac{1}{q^n}\sum_{F \in \mathcal{M}_n}\hat{F}_{q,n}(T^{-n}F)f(-T^{-n}F),
\end{equation}
with $f(\theta) = \sum_{P \in \mathcal{M}_n}\bold{e}(\theta P)$.
For every polynomial $F \in \mathcal{M}_n$ we denote $\theta_F = \{T^{-n}F\}$. We denote by $G_F,H_F$ the corresponding polynomials to $\theta_F$ as in Lemma \ref{dirichlet}. We divide $\mathcal{M}_n$ into three sets, with relation to Lemma \ref{circ_method_ext}:
\begin{align}
	\mathcal{S}_1 &= \left\{ F  \in \mathcal{M}_n : \theta_F = c/T \mbox{ for some } c \in \mathbb{F}_q \right\}, \nonumber \\
	\mathcal{S}_2 &= \left\{ F \in \mathcal{M}_n : |\theta_F - G_F/H_F| < q^{-n} \mbox{ and } 1,T \neq H_F \mbox{ is squarefree} \right\}, \label{m_n_partition}\\
	\mathcal{S}_3 &= M_n \backslash \left( \mathcal{S}_1 \cup \mathcal{S}_2 \right)~. \nonumber
\end{align} 
Note that this is indeed a partition of $\mathcal{M}_n$. The sum \eqref{fourier_transform_main} decomposes into  three sums accordingly. The sum over the polynomials in $\mathcal{S}_1$ will give us the main term, which we compute in Subsection \ref{main_term_sub}. We use the bounds obtained in Section \ref{fhat_section} in order to show that the sums over $\mathcal{S}_2,\mathcal{S}_3$ are of small size in Subsection \ref{error_term_sub}. The conclusion of the proof is given in Section \ref{conclusion_sub}.

\subsection{Error Term Bound}\label{error_term_sub}
Our aim is to bound
\begin{equation} \label{error_term_y}
	Y = \left| \frac{1}{q^n}\sum_{F \in \mathcal{S}_3}\hat{F}_{q,n}(\theta_F)f(-\theta_F) + \frac{1}{q^n}\sum_{F \in \mathcal{S}_2}\hat{F}_{q,n}(\theta_F)f(-\theta_F) \right|.
\end{equation}
First, one can easily check by the definition of $f$ in \eqref{exp_sum_defn} that for every $\theta \in \mathcal{U}$ one has \mbox{$|f(-\theta)| = |f(\theta)|$}. We now apply circle-method bounds on $f$ given in Lemma \ref{circ_method_ext}. For $F \in  \mathcal{S}_2$, we have 
\begin{equation}\label{fhat_s2_bound}
	|f(-\theta_F)| =  |f(\theta_F)| \leq q^{n - \frac{1}{2}\lfloor \frac{n}{2} \rfloor} + q^{n - \deg H_F}~. 
\end{equation}
For $F \in \mathcal{S}_3$ we have 
\begin{equation}\label{fhat_s3_bound}
	|f(-\theta_F)| =  |f(\theta_F)| \leq q^{n -  \frac{1}{2}\lfloor \frac{n}{2} \rfloor}~.
\end{equation} 
Applying the triangle inequality to \eqref{error_term_y} yields
\begin{equation*}
	Y \leq \frac{1}{q^n}\sum_{F \in \mathcal{S}_3}\left|\hat{F}_{q,n}(\theta_F)\right| \cdot \left| f(-\theta_F) \right| +  \frac{1}{q^n}\sum_{F \in \mathcal{S}_2}\left|\hat{F}_{q,n}(\theta_F)\right| \cdot \left| f(-\theta_F) \right|~.
\end{equation*}
By \eqref{fhat_s2_bound} and \eqref{fhat_s3_bound} we obtain
\begin{eqnarray*}
	Y &\leq&  \frac{1}{q^n} \sum_{F \in \mathcal{S}_3}\left| \hat{F}_{q,n}(\theta_F)\right| q^{n - \frac{1}{2}\lfloor \frac{n}{2}\rfloor} + \frac{1}{q^n} \sum_{F \in \mathcal{S}_2} \left| \hat{F}_{q,n}(\theta_F)\right| \left(q^{n - \frac{1}{2}\lfloor \frac{n}{2} \rfloor} +  q^{n - \deg H_F}\right)~.
\end{eqnarray*}
Simplifying this gives
\begin{eqnarray*}
	Y &\leq&  \frac{q^{n - \frac{1}{2}\lfloor \frac{n}{2}\rfloor}}{q^n} \sum_{F \in \mathcal{S}_2 \cup \mathcal{S}_3}\left| \hat{F}_{q,n}(\theta_F)\right| + \frac{1}{q^n} \sum_{F \in \mathcal{S}_2} q^{n - \deg H_F} \left| \hat{F}_{q,n}(\theta_F)\right| \\
	& \leq & \frac{q^{n - \frac{1}{2}\lfloor \frac{n}{2} \rfloor}}{q^n}\sum_{F \in \mathcal{M}_n} \left|\hat{F}_{q,n}(\theta_F)\right|
	 + \frac{1}{q^n} \sum_{G,H}
		  q^{n - \deg H_F}  \left| \hat{F}_{q,n}(G/H)\right|, \end{eqnarray*}
where in the latter sum $H$ ranges over squarefree polynomials other than $1, T$, and $G$ is coprime to $H$ of smaller degree. Note that we replace $ \left|\hat{F}_{q,n}(\theta_F)\right|$ by $\left| \hat{F}_{q,n}(G/H)\right|$ in the right-hand sum due to Lemma~\ref{dont_care_fraction}. Applying Proposition \ref{gen_bound} to the first sum and changing order of summation in the second yields
\begin{equation}\label{alpha_plus_aux}
	Y  \leq \alpha(n-I)q^{n - \frac{1}{2}\lfloor \frac{n}{2} \rfloor} + \frac{1}{q^n} \sum_{h = 1}^{n/2} ~q^{n-h}\sum_{\substack{G,H \\ \deg H = h}} \left|\hat{F}_{q,n}(G/H)\right|~,
\end{equation}
where again $H$ ranges over squarefree polynomials other than $1, T$. 
We assume that $N_i < q^{\varepsilon m}$ for all $0 \leq i < n$, 
and that $\frac{I}{n} < \frac{1}{4} \cdot \frac{1-2\varepsilon}{1-\varepsilon}(1 - \tau)$ for some $\tau > 0$. Denote 
$s = \sqrt{(1 - \varepsilon)/(1 - 2\varepsilon)}\sqrt{n}$, and $m_{n,I,\varepsilon} = \min\{n/I, s\}$. By Lemma \ref{err_term_smalli_bnd}, if $I = o\left(n/\log(n)\right)$ then
\begin{equation*}
\sum_{h = 1}^{n/2} q^{-h}\sum_{\substack{G,H \\ \deg H = h}} \left|\hat{F}_{q,n}(G/H)\right| \leq
	q^{n-I}q^{-\left(1 - 2\varepsilon + o(1)\right)m_{n,I,\varepsilon}}~.
\end{equation*}
In this case by \eqref{alpha_plus_aux} we obtain a bound on the error term:
\begin{equation} \label{eq_error_bound_smalli}
 \left| \left(\sum_{P \in \mathcal{C}} 1\right)  - \frac{1}{q^n}\sum_{F \in \mathcal{S}_1}\hat{F}_{q,n}(\theta_F)f(-\theta_F) \right| 
 	\leq \alpha(n-I)q^{n - \frac{1}{2}\lfloor \frac{n}{2} \rfloor} + q^{n-I}q^{-\left(1 - 2\varepsilon + o(1)\right)m_{n,I,\varepsilon}}~.
 \end{equation}

By Lemma \ref{err_term_bigi_bnd}, if we keep our notation and assume that $y = \frac{I \cdot s}{n} > 1$ and that $n$ is sufficiently large in terms of $\varepsilon$ and $\tau$, we obtain
\begin{equation*}
		\sum_{h = 1}^{n/2} q^{-h}\sum_{\substack{G,H \\ \deg H = h}} \left|\hat{F}_{q,n}(G/H)\right| \leq
			q^{n-I}q^{-(1 - 2\varepsilon)n/I + 4 - 3 \varepsilon + B_{q,\varepsilon,\tau,y}}~,
\end{equation*}
with $B_{q,\varepsilon,\tau,y}$ tending to zero as $q$ grows to infinity. Substituting this bound into \eqref{alpha_plus_aux} gives an error term bound of
\begin{equation} \label{eq_error_bound_bigi}
 	\left| \left(\sum_{P \in \mathcal{C}} 1\right)  - \frac{1}{q^n}\sum_{F \in \mathcal{S}_1}\hat{F}_{q,n}(\theta_F)f(-\theta_F) \right| 
	\leq \alpha(n-I)q^{n - \frac{1}{2}\lfloor \frac{n}{2} \rfloor} + 
	q^{n-I}q^{-(1 - 2\varepsilon)n/I + 4 - 3 \varepsilon + B_{q,\varepsilon,\tau,y}}~.
\end{equation}

\subsection{Main Term Computation}\label{main_term_sub}
For the main term, we have
\begin{equation*}
X = \frac{1}{q^n}\sum_{F \in \mathcal{S}_1}\hat{F}_{q,n}(\theta_F)f(-\theta_F) = 
\frac{1}{q^n}\sum_{a \in \mathbb{F}_q} \hat{F}_{q,n}(a/T)f(-a/T)
\end{equation*}
Expanding out the definition of $f$ and $\hat{F}_{q,n}$ given in  \eqref{exp_sum_defn} and \eqref{fhat_def}, we get
\begin{eqnarray*} X &=& 
	\frac{1}{q^n} \sum_{a \in \mathbb{F}_q} \sum_{F \in \mathcal{M}_n} \mathds{1}_\mathcal{C}(F)\bold{e}\left(F \cdot \frac{a}{T}\right) \sum_{P \in \mathcal{M}_n} \bold{e}\left(-\frac{a}{T}P\right),
\end{eqnarray*}
and changing order of summation gives
\begin{eqnarray*}
	X &=& \frac{1}{q^n} \sum_{F, P \in \mathcal{M}_n} \mathds{1}_\mathcal{C}(F) \sum_{a \in \mathbb{F}_q} \bold{e}\left(\frac{(F - P)a}{T}\right) \\
	&=& \frac{1}{q^n} \sum_{F, P \in \mathcal{M}_n} \mathds{1}_\mathcal{C}(F) \sum_{c \in \mathbb{F}_q} \psi\left((f_0 - p_0)a\right)~.
\end{eqnarray*}
By the orthogonality relations we have
\begin{equation*}
	X = \frac{1}{q^n} \sum_{F, P \in \mathcal{M}_n} \mathds{1}_\mathcal{C}(F) q\mathds{1}_{f_0 = p_0} = \frac{1}{q^{n-1}} \sum_{F, P \in \mathcal{M}_n} \mathds{1}_\mathcal{C}(F) \mathds{1}_{f_0 = p_0}.
\end{equation*}
Summing over $c = f_0 = p_0 \in \mathbb{F}_q$, we see that
\begin{equation}\label{main_term_product}
	X =  \frac{1}{q^{n-1}} \sum_{c \in \mathbb{F}_q}  \left( \sum_{\substack{F \in \mathcal{M}_n \\ f_0 = c}} \mathds{1}_\mathcal{C}(F) \right) \left( \sum_{\substack{P \in \mathcal{M}_n \\ p_0 = c}}1 \right)
\end{equation} 
Note that the middle sum is exactly 
\begin{equation}\label{monic_c_product}
	\sum_{\substack{F \in \mathcal{M}_n \\ f_0 = c}} \mathds{1}_\mathcal{C}(F) = \mathds{1}_{\mathcal{C}_0}(c)\prod_{1 \leq i \in \mathcal{J}}(q-N_i)~,
\end{equation}
so substituting this into \eqref{main_term_product} gives
\begin{equation}\label{take_prod_out}
	X = \frac{1}{q^{n-1}}\prod_{1 \leq i \in \mathcal{J}}(q-N_i)\sum_{c \in \mathbb{F}_q}  \mathds{1}_{\mathcal{C}_0}(c)\sum_{\substack{P \in \mathcal{M}_n \\ p_0 = c}}1
\end{equation}
When $c = 0$, we have  $ \sum_{\substack{P \in \mathcal{M}_n \\ P_0 = c}}1 = 0$. Otherwise, by the Prime Polynomial Theorem in arithmetic progressions we have $\sum_{\substack{P \in \mathcal{M}_n \\ P_0 = c}}1 = \frac{1}{q-1}\pi_q(n) + O\left(q^{n/2}\right)$.
If $0 \in \mathcal{I}$, then the only $c$ for which $ \mathds{1}_{\mathcal{C}_0}(c) \neq 0$ is $c = a_0$. Thus in this case
\begin{eqnarray*}
X
	&=& \frac{1}{q^{n-1}}\prod_{1 \leq i \in \mathcal{J}}(q-N_i)\sum_{\substack{P \in \mathcal{M}_n \\ p_0 = a_0}}1 \\
	&=& \frac{1}{q^{n-1}}\prod_{1 \leq i \in \mathcal{J}}(q-N_i)\left( \frac{1}{q-1}\pi_q(n) + O(q^{n/2}) \right)
\end{eqnarray*}
So we have shown that in this case
\begin{equation}\label{0_in_i_final}
	X
	= \left( \frac{1}{q^{n-1}(q-1)} \prod_{i \in \mathcal{J}} (q - N_i) \right) \pi_q(n) + O(q^{n/2})
\end{equation}

In the case where $0 \in \mathcal{J}$, we have that $\mathds{1}_{\mathcal{C}_0}(c) \neq 0$ for $c \in \mathbb{F}_q \backslash S_0$. So in this case
\begin{equation}\label{sum_over_not_s0}
	\sum_{c \in \mathbb{F}_q}  \mathds{1}_{\mathcal{C}_0}(c)\sum_{\substack{P \in \mathcal{M}_n \\ p_0 = c}}1
	= \sum_{c \in \mathbb{F}_q \backslash S_0} \sum_{\substack{P \in \mathcal{M}_n \\ p_0 = c}}1 =
	\pi_q(n) - \sum_{c \in S_0} \sum_{\substack{P \in \mathcal{M}_n \\ p_0 = c}}1
\end{equation}
By assumption $0 \notin S_0$, thus
\begin{equation*}
	  \sum_{c \in S_0}\sum_{\substack{P \in \mathcal{M}_n \\ p_0 = c}}1 = \frac{N_0}{q-1}\pi_q(n) + O\left(N_0q^{n/2}\right)
\end{equation*}
Plugging this into \eqref{sum_over_not_s0} gives
\begin{equation}\label{sum_0_in_j}
	\sum_{c \in \mathbb{F}_q}  \mathds{1}_{\mathcal{C}_0}(c)\sum_{\substack{P \in \mathcal{M}_n \\ p_0 = c}}1
		= \frac{q - N_0 - 1}{q-1}\pi_q(n) + O\left(N_0q^{n/2}\right),
\end{equation}
and combining \eqref{sum_0_in_j} and \eqref{take_prod_out} results in
\begin{align}\label{0_in_j_final}
	X =&~
		\left(\frac{1}{q^{n-1}}\prod_{1 \leq i \in \mathcal{J}}(q-N_i)\right) \cdot  \left(\frac{q - N_0 - 1}{q-1}\pi_q(n) + O\left(N_0q^{n/2}\right)\right) \\
\nonumber	=&~ \left(\frac{q-N_0-1}{q^{n-1}(q-1)}\prod_{1 \leq i \in \mathcal{J}}(q-N_i) \right) \pi_q(n) + O\left(N_0 q^{n/2}\right)
\end{align}

Writing 
\begin{equation}\label{sigma_def}
\mathfrak{S} = 
\left\{
	\begin{array}{ll}
		\frac{1}{q^{n-1}(q-1)} \prod_{i \in \mathcal{J}} (q - N_i) & \mbox{if } 0 \in \mathcal{I} \\
		\\
		\frac{q-N_0-1}{q^{n-1}(q-1)}\prod_{1 \leq i \in \mathcal{J}}(q-N_i) & \mbox{if } 0 \in \mathcal{J}
	\end{array}
\right.
\end{equation}
we have shown in \eqref{0_in_i_final} and \eqref{0_in_j_final} that 
\begin{equation}\label{main_term_final}
	\left| \frac{1}{q^n}\sum_{a \in \mathbb{F}_q} \hat{F}_{q,n}(a/T)f(-a/T) - \mathfrak{S} \cdot \pi_q(n) \right| = 
	\left| X - \mathfrak{S} \cdot \pi_q(n) \right| = 
	 O\left(q^{n/2 + 1}\right)
\end{equation}

\subsection{Conclusion}\label{conclusion_sub}

Recall that
\begin{eqnarray*}
	\sum_{P \in \mathcal{C}} 1 &=& \frac{1}{q^n}\sum_{F \in \mathcal{M}_n}\hat{F}_{q,n}(\theta_F)f(-\theta_F) \\
		&=&  \frac{1}{q^n}\sum_{F \in \mathcal{S}_1 \cup \mathcal{S}_2 \cup \mathcal{S}_3}\hat{F}_{q,n}(\theta_F)f(-\theta_F)~,
\end{eqnarray*}
where $\mathcal{S}_1$, $\mathcal{S}_2$, $\mathcal{S}_3$ provide a partition of $\mathcal{M}_n$ defined in \eqref{m_n_partition}.
Thus 
\begin{equation*}
	\left|\left(\sum_{P \in \mathcal{C}} 1\right) - \frac{1}{q^n}\sum_{F \in \mathcal{S}_1}\hat{F}_{q,n}(\theta_F)f(-\theta_F)\right|
		=  \left| \frac{1}{q^n}\sum_{F \in \mathcal{S}_2}\hat{F}_{q,n}(\theta_F)f(-\theta_F) + \frac{1}{q^n}\sum_{F \in \mathcal{S}_3}\hat{F}_{q,n}(\theta_F)f(-\theta_F) \right|~.
\end{equation*}
Bounds for 
\begin{equation*}\label{error_term_by_s1}
		\left|\left(\sum_{P \in \mathcal{C}} 1\right) - \frac{1}{q^n}\sum_{F \in \mathcal{S}_1}\hat{F}_{q,n}(\theta_F)f(-\theta_F)\right|
\end{equation*}
are given in \eqref{eq_error_bound_smalli} and \eqref{eq_error_bound_bigi}. For the main term, we have shown in \eqref{main_term_final} that
\begin{equation*}
	\left| \frac{1}{q^n}\sum_{F \in \mathcal{S}_1}\hat{F}_{q,n}(\theta_F)f(-\theta_F) - \mathfrak{S} \cdot \pi_q(n) \right| =  O\left(q^{n/2 + 1}\right)~,
\end{equation*}
with $\mathfrak{S}$ defined as in \eqref{sigma_def}.
Thus, by the triangle inequality,
\begin{equation}\label{eq_main_term}
	\left| \left(\sum_{P \in \mathcal{C}} 1\right)  -  \mathfrak{S} \cdot \pi_q(n)  \right| \leq
		\left| \left(\sum_{P \in \mathcal{C}} 1\right)  - \frac{1}{q^n}\sum_{F \in \mathcal{S}_1}\hat{F}_{q,n}(\theta_F)f(-\theta_F) \right| + O\left(q^{n/2 + 1}\right)~.
\end{equation}
Writing $s = \sqrt{(1 - \varepsilon)/(1 - 2\varepsilon)}\sqrt{n}$ and $m_{n,I,\varepsilon} = \min\{n/I,s\}$, plugging \eqref{eq_error_bound_smalli} into \eqref{eq_main_term} yields
\begin{eqnarray*}\left| \left(\sum_{P \in \mathcal{C}} 1\right)  - \mathfrak{S} \cdot \pi_q(n) \right| &\leq&  \alpha(n-I)q^{n - \frac{1}{2}\lfloor \frac{n}{2} \rfloor} + q^{n-I}q^{-\left(1 - 2\varepsilon + o(1)\right)m_{n,I,\varepsilon}}  + O\left(q^{n/2 + 1}\right) \\
	&=&  \left(\alpha(n-I) + o(1)\right)q^{n - \frac{1}{2}\lfloor \frac{n}{2} \rfloor} + q^{n-I}q^{-\left(1 - 2\varepsilon + o(1)\right)m_{n,I,\varepsilon}}~,
\end{eqnarray*} 
when $I = o\left(n/\log(n)\right)$.
If we have larger $I$, we assume that $y = \frac{I \cdot s}{n} > 1$ and that $n$ is sufficiently large in terms of $\varepsilon$ and $\tau$. In this case, we use the bound given in  \eqref{eq_error_bound_bigi} together with \eqref{eq_main_term} in order to obtain
\begin{eqnarray*}
	\left| \left(\sum_{P \in \mathcal{C}} 1\right)  - \mathfrak{S} \cdot \pi_q(n) \right| 
	&\leq&  \alpha(n-I)q^{n - \frac{1}{2}\lfloor \frac{n}{2} \rfloor} + q^{n-I}q^{-(1 - 2\varepsilon)n/I + 4 - 3 \varepsilon + B_{q,\varepsilon,\tau,y}}  + O\left(q^{n/2 + 1}\right) \\
	&=& \left(\alpha(n-I) + o(1)\right)q^{n - \frac{1}{2}\lfloor \frac{n}{2} \rfloor} + q^{n-I}q^{-(1 - 2\varepsilon)n/I + 4 - 3 \varepsilon + B_{q,\varepsilon,\tau,y}}~,
\end{eqnarray*} 
with $B_{q,\varepsilon,\tau,y}$ tending to zero as $q$ grows to infinity.
This completes the proof of the theorem.

\section{Discussion}\label{discussion_section}
In the introduction we defined a set $\mathcal{C}$ and stated how many primes one might expect $\mathcal{C}$ to contain. For convenience, we defined $\mathcal{C}$ to be
\begin{equation*}
	\mathcal{C} = \left\{T^n + \sum_{i = 0}^{n-1}b_iT^i : b_i = a_i~ \forall i \in \mathcal{I}, b_j \not \in S_j~ \forall j \in \mathcal{J} \right\}~,
\end{equation*}
where $\mathcal{I} \uplus \mathcal{J}$ be a partition of $\{0,\dots,n-1\}$,  $a_0 \neq 0$ if $0 \in \mathcal{I}$, and  $0 \notin S_0$ if $0 \in \mathcal{J}$. Write $I = \# \mathcal{I}$, and for every $j \in \mathcal{J}$ write $N_j =  \# S_j$.
The number of primes to be expected in $\mathcal{C}$ is $\mathfrak{S} \cdot \pi_q(n)$, with 
\begin{equation*}
\mathfrak{S} = 
\left\{
	\begin{array}{ll}
		 \frac{\prod_{j \in \mathcal{J}} (q - N_j)}{q^{n-1}(q-1)}  & \mbox{if } 0 \in \mathcal{I}\\
		 \\
		 \frac{(q-1-N_0)\prod_{0 < j \in \mathcal{J}} (q - N_j)}{q^{n-1}(q-1)} & \mbox{if } 0 \in \mathcal{J}~.
	\end{array}
\right.
\end{equation*}
The asymptotics are indeed what one might expect: Note that there are $q-N_i$ options for every index $i \in \mathcal{J}$, so $\# \mathcal{C} = \prod_{i \in \mathcal{J}}(q - N_i)$. We expect the proportion of  primes in $\mathcal{C}$ to be similar to that in all of the monic polynomials, up to a correction factor due to the coefficient $b_0$. We think of $\mathfrak{S}$ as $\mathfrak{S} = \frac{\# \mathcal{C}}{q^n} \cdot R$, with $R$ being a correction factor. If $0 \in \mathcal{I}$, then since $a_0 \neq 0$, the probability of being prime increases by a factor of $\frac{q}{q-1}$, hence in this case
\begin{equation*}
\mathfrak{S} = \frac{\# \mathcal{C}}{q^n} \cdot R = \frac{\# \mathcal{C}}{q^n} \cdot \frac{q}{q-1} = \frac{1}{q^{n-1}(q-1)} \prod_{i \in \mathcal{J}} (q - N_i).
\end{equation*}
In the other case where $0 \in \mathcal{J}$, having assumed $0 \notin S_0$ the correction factor will be \mbox{$R = \frac{q - N_i - 1}{q-1} \cdot \frac{q}{q-1}$}, so in this case
\begin{equation*}
\mathfrak{S} = \frac{\# \mathcal{C}}{q^n} \cdot R = \frac{\# \mathcal{C}}{q^n} \cdot \frac{q - N_i - 1}{q-1} \cdot \frac{q}{q-1} = \frac{q - N_0 - 1}{q^{n-1}(q-1)} \prod_{1 \geq i \in \mathcal{J}} (q - N_i) .
\end{equation*}

\bibliographystyle{amsplain}
\providecommand{\bysame}{\leavevmode\hbox to3em{\hrulefill}\thinspace}
\providecommand{\MR}{\relax\ifhmode\unskip\space\fi MR }
\providecommand{\MRhref}[2]{%
  \href{http://www.ams.org/mathscinet-getitem?mr=#1}{#2}
}
\providecommand{\href}[2]{#2}

\end{document}